\numberwithin{equation}{section}
\newtheorem{lem}[equation]{Lemma}
\newtheorem{thm}[equation]{Theorem}
\newtheorem{Example}[equation]{Example}
\newtheorem{remark}[equation]{Remark}
\newenvironment{rmk}{\begin{remark}\rm}{\end{remark}}
\def\co{\colon\thinspace}
\newcommand{\Int}{\mbox{Int}}
\newcommand{\vol}{\mbox{vol}}
\newcommand{\Iso}{\mbox{Iso}}
\newcommand{\e}{\varepsilon}
\newcommand{\II}{\mathbb I}
\def\bu{\bullet}
\def\a{\alpha}
\def\G{\Gamma}
\def\B{\mathcal B}
\def\de{\delta}
\def\g{\gamma}
\def\o{\omega}
\def\Si{\Sigma}
\def\M{\mathcal M}
\def\K{\mathcal K}
\def\sf{\mathfrak s}
\def\L{\Lambda}
\def\d{\partial}
\def\k{\kappa}
\def\r{\rho}
\def\o{\omega}
\def\s{\sigma}
\def\Cka{\scalebox{.9}{$C^{k,\a}\!$}}
\def\Czo{\scalebox{.9}{$C^{\hspace{.5pt}0,1}\!$}}
\def\Ck{\scalebox{.9}{$C^{k}\!$}}
\def\Ckz{\scalebox{.9}{$C^{k,0}\!$}}
\def\Cinfty{\scalebox{.9}{$C^{\infty}\!$}}
\def\S1{\bf S^1}
\newcommand{\R}{{\mathbb R}}
\def\equalsfill{$\m@th\mathord=\mkern-7mu
\cleaders\hbox{$\!\mathord=\!$}\hfill
\mkern-7mu\mathord=$}
\begin{document}

\abovedisplayskip=6pt plus3pt minus3pt
\belowdisplayskip=6pt plus3pt minus3pt

\title[Hyperspaces of smooth convex bodies up to  congruence]
{\bf Hyperspaces of smooth convex bodies \\ up to congruence}

\thanks{\it 2010 Mathematics Subject classification.\rm\ 
Primary 54B20, Secondary 52A20, 57N20.\newline
\it\ \quad Keywords:\rm\ convex body, hyperspace, positive curvature,
infinite dimensional topology.}\rm

\author{Igor Belegradek}

\address{Igor Belegradek\\School of Mathematics\\ Georgia Institute of
Technology\\ Atlanta, GA 30332-0160}\email{ib@math.gatech.edu}


\date{}
\begin{abstract} 
We determine the homeomorphism 
type of the hyperspace of positively curved $C^\infty$ convex bodies 
in $\mathbb R^n$, and derive various properties of its quotient by the group of Euclidean isometries. 
We make a systematic study of 
hyperspaces of convex bodies that are at least $C^1$. 
We show how to destroy the symmetry of a family of convex bodies,
and prove that this cannot be done modulo congruence.
\end{abstract}
\maketitle

\section{Introduction}
\label{sec: intro}

A {\em hyperspace of\,} $\R^n$ is a set of compact subsets of $\R^n$
equipped with the Hausdorff metric, 
and two subsets are {\em congruent\,} if they lie
in the same orbit of $\Iso(\R^n)$, the group of Euclidean isometries. 
To avoid trivialities {\em we assume that $n\ge 2$.} 

A {\em convex body in $\R^n$\,} is a compact convex set with nonempty interior.
A function is $\Cka$ if its
$k$th partial derivatives are
$\a$-H\"older for $\a\in (0,1]$ and continuous for $\a=0$ where
as usual $\Ck$ means $\Ckz$, see~\cite{GilTru}
and~\cite[Section 2]{BanBel} for background.
A {\em $\Cka$ convex body\,} is a convex body whose boundary is 
a $\Cka$ submanifold of $\R^n$.
Any convex body is $\Czo$ because convex functions are locally Lipschitz.

There is an established framework for studying
topological properties of hyperspaces, and e.g., the homeomorphism types
of the following hyperspaces of $\R^n$ are known: 
convex compacta~\cite{NQS}, convex bodies~\cite{AntPer}, 
convex polyhedra~\cite[Exercise 4.3.7]{BRZ-book}, 
strictly convex bodies~\cite{Baz-strict-conv},
convex compacta of constant width~\cite{Baz-width, BazZar-width, AJJ-width}.

One goal of this paper is to add to this list 
the hyperspace of $\Cinfty$ convex bodies of positive Gaussian curvature.

Another goal is to study certain hyperspaces that are not closed 
under Minkowski sum, e.g., the results of this paper
are used in~\cite{Bel-ghs2}
to determine the homeomorphism type of a hyperspace of $\R^3$
whose $\Iso(\R^3)$-quotient is homeomorphic to the Gromov-Hausdorff space 
of $\Cinfty$ nonnegatively curved $2$-spheres. 

Let $\K_s$ be the hyperspace of convex compacta in $\R^n$ 
with Steiner point at the origin, and let $\B_p$ be the hyperspace
of $\Cinfty$ convex bodies in $\K_s$ with boundary of 
positive Gaussian curvature. Placing the Steiner point at the origin
is mainly a matter of convenience. In particular, the space of all 
convex compacta $\K$ in $\R^n$ is homeomorphic to
$\K_s\times\R^n$, and the orbit spaces $\K/\mathrm{Iso}(\R^n)$, $\K_s/O(n)$
are homeomorphic, see (\ref{form: st pt times Rn})
and Lemma~\ref{lem: st point}.

Consider the {\em Hilbert cube\,} $Q=[-1, 1]^\o$ and its {\em radial interior\,} 
$$\Si=\{(t_i)_{i\in \o}\ \text{in}\ Q\,:\, \displaystyle{\sup_{i\in\o}|t_i| < 1} \}.$$
The superscript $\o$ refers to the product of countably many copies of a space.
We have a canonical inclusion $\Si^\o\subset Q^{\,\o}$.
Clearly $Q^{\,\o}$ and $Q$ are homeomorphic. Also $\Si$ is $\s$-compact
while $\Si^\o$ is not.
Here is the main result of this paper.

\begin{thm} 
\label{thm: intro hyperspace}
Given a point $*$ in $Q^{\,\o}\setminus\Sigma^\o$
there exists a homeomorphism $\K_s\to Q^{\,\o}\setminus \{*\}$
that takes $\B_p$ onto $\Si^\o$.
\end{thm}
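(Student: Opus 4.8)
The plan is to work with support functions. The map $K\mapsto h_K$ embeds $\K_s$ homeomorphically onto a closed convex subset of the Banach space $C(S^{n-1})$: Minkowski addition becomes addition of support functions, Hausdorff convergence becomes uniform convergence, and ``Steiner point at the origin'' is a single linear constraint. By the Blaschke selection theorem $\K_s$ is locally compact and $\sigma$-compact; being convex it is an absolute retract; and it is infinite dimensional and noncompact, since convex bodies may have arbitrarily large diameter. Hence, by the classification of such convex sets together with the Hilbert-cube-manifold characterization and \cite{NQS} (and the reduction to Steiner point at the origin), $\K_s\cong Q\times[0,1)\cong Q^{\,\o}\setminus\{*\}$. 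So it suffices to show that $(\K_s,\B_p)$ and $(Q^{\,\o}\setminus\{*\},\Si^\o)$ are homeomorphic as pairs; note that, $*$ being a $Z$-set of $Q^{\,\o}$ disjoint from $\Si^\o$, the set $\Si^\o$ is still an absorbing set in the $Q$-manifold $Q^{\,\o}\setminus\{*\}$, and this already makes the conclusion independent of the choice of $*\in Q^{\,\o}\setminus\Si^\o$.

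\textbf{Strategy.}
I would prove that $\B_p$ is an absorbing set in $\K_s$ for the Borel class of absolute $F_{\sigma\delta}$-spaces, recall that $\Si^\o$ is the standard model of such an absorbing set in $Q^{\,\o}$, and apply the Bestvina--Mogilski uniqueness theorem for absorbing sets to obtain the homeomorphism of pairs. Four points must be checked. \emph{(a) $\B_p$ is an absolute retract:} it is a convex subset of $C^\infty(S^{n-1})$, because Minkowski addition and convex combination of support functions preserve both $C^\infty$-smoothness and positivity of the Gaussian curvature. \emph{(b) $\B_p$ is absolute $F_{\sigma\delta}$:} this is a Borel-complexity computation; for each $k$ the condition ``$h_K\in C^k$'' is $\Pi^0_3$, being equivalent to uniform convergence of the difference quotients of order $k$ (a statement of the shape $\forall\varepsilon\,\exists\delta\,\forall(\cdots)$ with closed matrix), hence ``$h_K\in C^\infty$'' $=\bigcap_k\{h_K\in C^k\}$ is again $\Pi^0_3$, while positivity of the Gaussian curvature, an open condition among $C^2$ bodies, imposes only a further Borel restriction on the $C^\infty$ locus, so the total complexity stays $F_{\sigma\delta}$. \emph{(c) $\B_p$ is homotopy dense in $\K_s$:} a parametrized smoothing — mollify the support function in an $SO(n)$-equivariant way and add a ball of small radius — gives a deformation of $\K_s$ into $\B_p$ fixing the Steiner point at the origin. \emph{(d) $\B_p$ is strongly $F_{\sigma\delta}$-universal in $\K_s$.}

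\textbf{The main obstacle.}
Step (d) is the geometric core, and the hardest part. Given a compactum $K$, an absolute $F_{\sigma\delta}$-subset $A\subset K$, a map $f\colon K\to\K_s$, and $\varepsilon>0$, one must construct a $Z$-embedding $g\colon K\to\K_s$ that is $\varepsilon$-close to $f$ and satisfies $g^{-1}(\B_p)=A$. The ingredients are: the equivariant smoothing of (c), which maps everything into $\B_p$ and is $\varepsilon$-close to the identity; a \emph{defect} construction, organized along the factorization of $\Si^\o$ into countably many copies of $\Si$, that grafts onto a body perturbations supported on disjoint spherical caps whose scales are each governed by one ``$\Si$-coordinate'' and encode, for every $m$, the $F_\sigma$ condition cutting out the $m$-th piece of $A$ — so that the grafted body is $C^\infty$ with positive curvature precisely when the parameter lies in $A$; and a $Z$-embedding adjustment, obtained by composing with an ambient near-homeomorphism of $\K_s$ pushing $f(K)$ off a $Z$-set, using the Hilbert-cube factor and general position. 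The delicate points are the compatibility of smoothing and defect (no defect on $A$, yet the body genuinely in $\B_p$ there), the continuity and the absorbing — not merely universal — character of the dependence on the $F_{\sigma\delta}$-coordinates, and maintaining the $Z$-embedding and metric control throughout; the hypothesis $n\ge 2$ enters because the boundary must have positive dimension to carry these perturbations. Granting (a)--(d), the Bestvina--Mogilski theorem yields a homeomorphism $(\K_s,\B_p)\to(Q^{\,\o}\setminus\{*\},\Si^\o)$, which is exactly the assertion.
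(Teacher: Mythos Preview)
Your overall strategy---identify $\K_s$ with $Q^{\,\o}\setminus\{*\}$, show $\B_p$ is $\M_2$-absorbing, then invoke uniqueness of absorbing pairs---is exactly the paper's, and steps (a) and (c) are correct and match the paper. But several required ingredients are missing or only sketched.

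First, you never address the $\s Z$ property or SDAP, both of which are part of ``$\M_2$-absorbing''. The paper obtains SDAP (Lemma~\ref{lem: between set has SDAP}) from the fact that $\B^{1,\a}$ is simultaneously homotopy dense and homotopy negligible in the locally compact space $\K_s$, and obtains $\s Z$ (Lemma~\ref{lem: sZ}) by exhausting $\B^{1,\a}$ by sets of bounded $C^{1,r}$ norm, using Arzel\`a--Ascoli together with a truncate-then-regularize trick to push maps $Q\to\K_s$ off each such set. Second, your (b) is incomplete at the curvature step: ``open among $C^2$ bodies'' is openness in the $C^2$ topology, not the Hausdorff topology, so you have not shown the curvature condition is $F_{\s\de}$ in $\K_s$. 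The paper avoids direct Borel bookkeeping: it notes that $\B_p$ with the $C^\infty$ topology is Polish (open in a Polish space of hypersurfaces), hence $G_\de$ after embedding via support functions, and then cites~\cite{BanBel} for the fact that the identity $\G^\infty(\mathbb S^{n-1})\to\G^0(\mathbb S^{n-1})$ carries $G_\de$ sets to sets in $\M_2$.

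Most seriously, (d) is a wish list, not a proof. You describe properties a ``defect construction'' should have but give no mechanism for encoding an arbitrary $F_{\s\de}$ set into a family of convex bodies in $\B_p$, nor for ensuring closedness, continuity, or the $Z$-embedding condition. The paper does not attempt a direct construction either; instead it invokes a black-box criterion from~\cite{BanBel} (stated as Theorem~\ref{thm: criterion M_2-univer}): for a linear differential operator $\mathfrak D$ on $C^l(N)$ and a suitable constraint set $C^l_\bu$, any continuous injection of $C^k_\bu$ into a Hausdorff space sends $C^\infty_\bu$ to an $\M_2$-universal set. The paper applies this with $N=\II$, $\mathfrak D(h)=h''$, $\eta=1$, and a map taking $q\in C^2_\bu$ to the body $\mathbb B^n+A_q$, where $A_q$ is rotationally symmetric with profile $x_n=q(r)$ on part of its boundary. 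The delicate point is checking that the image of $C^\infty_\bu$ is \emph{closed} in the ambient linear space, which uses that $q_m-\tfrac{r^2}{2}$ is convex and passes to the limit. Strong $\M_2$-universality then follows from~\cite[Proposition~5.3.5]{BRZ-book}, which says a convex AR with SDAP in a linear metric space is strongly $\M_2$-universal once it contains a closed $\M_2$-universal subset. None of this machinery appears in your sketch, and your proposed direct approach would amount to reproving that black box.
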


The proof of Theorem~\ref{thm: intro hyperspace} verifies the assumptions 
of recognition theorems for spaces modelled on $Q$ and $\Si^\o$ 
as described e.g., in~\cite{BRZ-book}.  
This involves various convex geometry techniques, as well as
a method developed in~\cite{BanBel}.   

Yet another objective of this paper is to study the topology of $\K_s$
and $\B_p$ modulo congruence, see~\cite{TorWes, AgeBogRep, Ant-fm2000, AntPer, Age2016} 
for related results. 

Set $\bm{\K}_s=\K_s/O(n)$ and $\bm{\B}_p=\B_p/O(n)$ with the quotient topology. 
Denote the principal $O(n)$-orbits in $\K_s$, $\B_p$ by
$\accentset{\circ}{\K}_s$, $\accentset{\circ}{\B}_p$, respectively, and
let $\accentset{\circ}{\bm{\K}}_s$, $\accentset{\circ}{\bm{\B}}_p$
be their $O(n)$-orbit spaces. 
By the Slice Theorem  
$\accentset{\circ}{\bm{\K}}_s$ is open in $\bm{\K}_s$ while
the orbit maps $\accentset{\circ}{\K}_s\to \accentset{\circ}{\bm{\K}}_s$ and
$\accentset{\circ}{\B}_p\to \accentset{\circ}{\bm{\B}}_p$ are principal $O(n)$-bundles.
Some other properties of the spaces are summarized below.

\begin{thm}
\label{thm: intro quotient}
\begin{enumerate}
\item[\textup{(1)}]
$\bm{\K}_s$ is a locally compact Polish absolute retract. \vspace{2pt}
\item[\textup{(2)}] 
$\bm{\B}_p$ is an absolute retract that is neither Polish nor locally compact. 
\vspace{2pt}
\item[\textup{(3)}]
Any $\s$-compact subset of $\bm{\B}_p$ has empty interior.
\vspace{2pt}
\item[\textup{(4)}]
$\bm{\B}_p$ is homotopy dense in $\bm{\K}_s$, i.e., any continuous map
$Q\to \bm{\K}_s$ can be uniformly approximated by a continuous map with image in $\bm{\B}_p$. 
\vspace{2pt}
\item[\textup{(5)}]
$\accentset{\circ}{\B}_p$ and $\accentset{\circ}{\K}_s$ are contractible, while
$\accentset{\circ}{\bm{\B}}_p$ and $\accentset{\circ}{\bm{\K}}_s$ 
are homotopy equivalent to $BO(n)$, the Grassmanian of $n$-planes in $\R^\o$.
\vspace{2pt}
\item[\textup{(6)}]
The pairs $(\accentset{\circ}{\bm{\K}}_s, \accentset{\circ}{\bm{\B}}_p)$
and $(Q^{\,\o}, \Si^\o)$ are locally homeomorphic, i.e., 
each point of $\accentset{\circ}{\bm{\B}}_p$ has a neighborhood 
$U\subset\accentset{\circ}{\bm{\K}}_s$ such that some open embedding
$h\co U\to Q^{\,\o}$ takes 
$U\cap \accentset{\circ}{\bm{\B}}_p$ onto $h(U)\cap\Si^\o$.
\vspace{2pt}
\item[\textup{(7)}]
There is a locally finite simplicial complex $L$ and a homeomorphism 
$h\co \accentset{\circ}{\bm{\K}}_s\to L\times Q^{\,\o}$
that maps $\accentset{\circ}{\bm{\B}}_p$ onto  $L\times \Si^\o$. 
\end{enumerate}
\end{thm}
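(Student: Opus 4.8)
The plan is to derive (7) from the preceding items together with a standard characterization theorem for manifolds modelled on $Q^\o$, relative to the submodel $\Sigma^\o$. First I would establish that $\accentset{\circ}{\bm{\K}}_s$ is a $Q^\o$-manifold: by item (1) it is a locally compact Polish AR, and by the Slice Theorem the orbit map $\accentset{\circ}{\K}_s\to\accentset{\circ}{\bm{\K}}_s$ is a principal $O(n)$-bundle, so local structure of $\accentset{\circ}{\bm{\K}}_s$ is modelled on that of $\accentset{\circ}{\K}_s$ after dividing by a free compact Lie group action; combined with Theorem~\ref{thm: intro hyperspace} (which identifies $\K_s$, hence an open subset containing $\accentset{\circ}{\K}_s$, with an open subset of $Q^\o$) and Toru\'nczyk's characterization of $Q^\o$-manifolds, one gets that $\accentset{\circ}{\bm{\K}}_s$ is a $Q^\o$-manifold. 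The key point is that $Q^\o\cong Q^\o\times Q$ and $BO(n)$ has the homotopy type of a locally finite complex, so the local triviality of the bundle globalizes.

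Next I would invoke the classification theorem for $(Q^\o,\Sigma^\o)$-manifold pairs from~\cite{BRZ-book}: a pair $(M,N)$ where $M$ is a $Q^\o$-manifold and $N\subset M$ is a homotopy dense subset that is itself a $\Sigma^\o$-manifold, and such that the pair is locally homeomorphic to $(Q^\o,\Sigma^\o)$, is determined up to homeomorphism by the homotopy type of $M$; moreover such a pair is homeomorphic to $(L\times Q^\o, L\times\Sigma^\o)$ for any locally finite simplicial complex $L$ homotopy equivalent to $M$. The hypotheses of this theorem are exactly what items (2)–(6) supply: item (6) gives the local homeomorphism of pairs, item (4) (pushed down from $\bm{\B}_p\subset\bm{\K}_s$ to the principal strata, using that the orbit maps are bundles and that homotopy density passes to and from bundle total spaces) gives homotopy density of $\accentset{\circ}{\bm{\B}}_p$ in $\accentset{\circ}{\bm{\K}}_s$, and items (2)–(3) are the ingredients showing $\accentset{\circ}{\bm{\B}}_p$ is a $\Sigma^\o$-manifold (an AR that is not locally compact and all of whose $\sigma$-compact subsets have empty interior, so that it has the strong discrete approximation property; again one transfers this from $\bm{\B}_p$ to $\accentset{\circ}{\bm{\B}}_p$ via the bundle). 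Finally, by item (5) the homotopy type of $\accentset{\circ}{\bm{\K}}_s$ is that of $BO(n)$, which is homotopy equivalent to a locally finite simplicial complex $L$ (e.g.\ a CW model for the Grassmannian of $n$-planes in $\R^\o$, triangulated), and the classification theorem then yields a homeomorphism $h\co\accentset{\circ}{\bm{\K}}_s\to L\times Q^\o$ carrying $\accentset{\circ}{\bm{\B}}_p$ onto $L\times\Sigma^\o$, which is precisely (7).

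The main obstacle I anticipate is the careful transfer of the ``$\Sigma^\o$-manifold'' and ``homotopy dense'' properties from the global spaces $\bm{\K}_s\supset\bm{\B}_p$ down to the principal strata $\accentset{\circ}{\bm{\K}}_s\supset\accentset{\circ}{\bm{\B}}_p$, since items (1)–(4) are stated for the full quotients whereas (7) concerns the open principal subsets. This requires knowing that the principal stratum is open (true by the Slice Theorem, as noted in the excerpt) and that the relevant infinite-dimensional properties are local and are inherited by, and detected on, principal $O(n)$-bundles with base a $Q^\o$- (resp.\ $\Sigma^\o$-)manifold; the latter is where one must be most careful, using that $O(n)$ is a compact Lie group acting freely with a locally trivial orbit map, so that locally the pair $(\accentset{\circ}{\bm{\K}}_s,\accentset{\circ}{\bm{\B}}_p)$ looks like $(U\times Q^\o, U\times\Sigma^\o)$ for $U$ open in a manifold — but item (6) already packages exactly this local information, so in the end the proof is mostly an application of the recognition and classification machinery of~\cite{BRZ-book} to the data assembled in (1)–(6).
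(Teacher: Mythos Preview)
Your overall strategy---show that $\accentset{\circ}{\bm{\K}}_s$ is a $Q$-manifold, that the pair $(\accentset{\circ}{\bm{\K}}_s,\accentset{\circ}{\bm{\B}}_p)$ is locally modelled on $(Q^{\,\o},\Si^\o)$, and then quote a classification theorem---is essentially the paper's. But there is a genuine gap at the classification step. The statement you invoke, that a $(Q^{\,\o},\Si^\o)$-manifold pair is ``determined up to homeomorphism by the homotopy type of $M$'' and is therefore homeomorphic to $(L\times Q^{\,\o},L\times\Si^\o)$ for \emph{any} locally finite $L$ homotopy equivalent to $M$, is false as stated: by Chapman's work, two $Q$-manifolds that are homotopy equivalent need not be homeomorphic (the obstruction is simple-homotopy-theoretic), and nothing in (1)--(6) rules this out. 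What Chapman \emph{does} give is that homotopy equivalent $Q$-manifolds become homeomorphic after multiplying by $[0,1)$.

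The paper fills exactly this gap with an extra geometric ingredient you are missing: Lemma~\ref{lem: prod with [0,1)}, which shows (via Wong's criterion, using the scaling homotopy $K\mapsto tK$ to push off any compactum) that $\accentset{\circ}{\bm{\K}}_s$ is homeomorphic to $\accentset{\circ}{\bm{\K}}_s\times[0,1)$. Only then can one conclude $\accentset{\circ}{\bm{\K}}_s\cong L\times Q^{\,\o}$, and this forces $L$ to be of the form $[0,1)\times L'$ with $L'$ a locally finite complex homotopy equivalent to $BO(n)$---not an arbitrary such complex, as your proposal suggests. Once the ambient homeomorphism is in hand, the pair statement follows from the uniqueness of $(\M_0,\M_2)$-absorbing pairs (Lemma~\ref{lem: uniq absorb}), which is the correct replacement for the ``classification theorem'' you cite. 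A secondary point: items (2)--(3) alone do not yield SDAP or strong $\M_2$-universality for $\accentset{\circ}{\bm{\B}}_p$; the paper establishes that $\accentset{\circ}{\bm{\B}}_p$ is a $\Si^\o$-manifold by a direct argument (Theorem~\ref{thm: quotient sigma-to-omega}) that pushes the $\M_2$-absorbing properties through the principal $O(n)$-bundle, rather than by reading them off (2)--(3).
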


That $\bm{\K}_s$ and $\bm{\B}_p$ are absolute retracts is essentially due to 
Antonyan~\cite{Ant-ANE-G-conv, Ant-orbit-sp-ANR}. 
Homotopy density of $\bm{\B}_p$ in $\bm{\K}_s$ is immediate from Schneider's regularization of convex bodies, 
see Lemma~\ref{lem: regularization}. 

Contractibility of $\accentset{\circ}{\B}_p$, $\accentset{\circ}{\K}_s$
is established geometrically in Lemma~\ref{lem: hom dense trivial isometry}, 
which proves that these spaces are homotopy dense in $\K_s$.
Therefore, they are classifying spaces for principal $O(n)$-bundles,
and (5) of Theorem~\ref{thm: intro quotient} follows. 

The claim (6) of Theorem~\ref{thm: intro quotient}
exploits the $O(n)$-bundle structure and depends on
Theorem~\ref{thm: intro hyperspace}.

The claim (7) follows in a standard way from (5)-(6) and the observation
that $\accentset{\circ}{\bm{\K}}_s$ is homeomorphic to $\accentset{\circ}{\bm{\K}}_s\times [0,1)$,
see Lemma~\ref{lem: prod with [0,1)}. One can take $L$ to
be the product of $[0,1)$ with any locally finite simplicial complex
that is homotopy equivalent to $BO(n)$. 

Since $\bm{\K}_s$ is contractible while $\accentset{\circ}{\bm{\K}}_s$ is not,
the latter is not homotopy dense in the former, i.e., there is no
continuous ``destroy the symmetry'' map $\bm{\K}_s\to \accentset{\circ}{\bm{\K}}_s$
that would instantly push every singular $k$-disk in $\bm{\K}_s$ 
into $\accentset{\circ}{\bm{\K}}_s$. More precisely, 
such a map exists for $k\le 1$ but not for $k=2$,
see Section~\ref{sec: destroy} where we also prove a local version of this assertion.

The following questions highlight how much we do not yet know.
\begin{enumerate}
\vspace{-2pt}
\item[(a)]
Is the hyperspace $\B^\infty$ of $C^\infty$ convex bodies homeomorphic to $\Si^\o$?
Unlike $\B_p$ the hyperspace $\B^\infty$ is not convex~\cite{Bow-analy}, and convexity was
essential in our proof of strong $\M_2$-universality of $\B_p$. 
\vspace{3pt}
\item[(b)]
Are the  orbit spaces $\bm{\B}_p$ and $\bm{\K}_s$ topologically homogeneous, i.e.,
do their homeomorphism groups act transitively? (This seems unlikely).\vspace{3pt}
\item[(c)]
Is the congruence class of the unit sphere a $Z$-set in $\bm{\K}_s$?
Does it have contractible complement?\vspace{3pt}
\item[(d)]
Does every point $\bm{\B}_p$, $\bm{\K}_s$ 
have a basic of contractible neighborhoods? Like any AR,
these orbit spaces are locally contractible, i.e.,  
any neighborhood of a point contains a smaller neighborhood that
contracts inside the original one.\vspace{3pt}
\item[(e)]
As we shall see in Lemma~\ref{lem: Ks(Bn)} the hyperspace
$\K_s(\mathbb B^n)$ that consists of sets in $\K_s$
contained in $\mathbb B^n$ is homeomorphic to $Q$.
Is there an $O(n)$-equivariant homeomorphism of $\K_s(\mathbb B^n)$ 
onto a countable product of Euclidean units disks (of various dimensions)
where the $O(n)$-action on the product is diagonal and irreducible on each factor?
Such products are considered in~\cite[Section 1]{Ant-orbit-sp-1988},~\cite[p.553]{West-open-pr},~\cite[p.161]{Age2016}.
\vspace{3pt}
\item[(f)]
For $\k>0$ consider the hyperspaces $\B_{>\k}$, $\B_{\ge k}$ in $\K_s$ consisting
of of convex bodies whose boundary has Gaussian 
curvature $>k$ or $\ge k$, respectively. Are they ANR, or more generally 
$H$-ANR for every closed subgroup $H\le O(n)$?
All I can say is that $\B_{>\k}/H$ and 
$\B_{\ge\k}/H$ are weakly contractible
as each singular sphere in $\B_{\ge\k}/H$ contracts in $\B_p/H$ and
every singular disk in $\B_p/H$ can be rescaled into $\B_{>\k}$.
\end{enumerate}

The structure of the paper is as follows. 
Section~\ref{sec: intro} describes the main results and also lists some open questions.
In Section~\ref{sec: notations} we collect a number of notations and conventions.
Some facts of infinite dimensional topology and convex geometry are reviewed
in Sections~\ref{sec: inf dim top} and~\ref{sec: conv geom}, respectively.
In Section~\ref{sec: hyperspace of dim >1} we classify the $Q$-manifolds encountered in the paper.
Section~\ref{sec: hyp homeo to Si-to-o} is the heart of the paper where the
key claim of Theorem~\ref{thm: intro hyperspace} is proven: $\B_p$
is homeomorphic to $\Si^\o$.
The difficulty here is to match the tools of convex geometry 
with what is required by the infinite
dimensional topology.
The proof is finished in Section~\ref{sec: pairs} via a standard argument.
In Sections~\ref{sec: quotients}--\ref{sec: destroy} we
prove Theorem~\ref{thm: intro quotient} and show that one can continuously
destroy the symmetry of convex bodies, but one cannot do this modulo congruence. 

\section{Notations and conventions}
\label{sec: notations}

Throughout the paper $\o$ is the set of the nonnegative integers, $\II=[0,1]$, 
$\mathbb S^{n-1}=\d\mathbb B^n$ where $\mathbb B^{n}$ 
is the closed unit ball about the origin $o\in\R^n$. 
We use the following notations for hyperspaces of $\R^n$: 
\begin{gather*}
\K = \{\text{convex compacta in $\R^n$}\}\\
\K^{k\le l} =\{\text{sets in}\ \K 
\text{\ of dimension at least $k$ and at most $l$}\} \ \ \text{and}\  \
\K^k=\K^{k\le k}
\\ 
\K_s =\{\text{sets in}\ \K \ \text{with Steiner point at $o$}\}
\ \ \text{and}\  \
\K_s^{k\le l} =\K^{k\le l} \cap \K_s 
\\
\B^{k,\a} =\{
\text{$C^{k,\a}$ convex bodies in $\K_s$}
\} \ \ \text{and}\ \
\B^{k} =\B^{k,0}  
\\
\B_p  =\{
\text{convex bodies in  ${\mathcal  B}^\infty$ with boundary of
positive Gaussian curvature}\}
\end{gather*}
To stress that these are hyperspaces of $\R^n$
we may write $\K(\R^n)$ for $\K$, etc. On one occasion
we discuss $\K_s(\mathbb B^n)$, the hyperspace of consisting of
sets of $\K_s$ lying in the unit ball about the origin.
Note that $\B_p$ equals 
the class $C^\infty_+$ discussed in~\cite[Section 3.4]{Sch-book}.
Each of the above hyperspaces is $O(n)$-invariant, and we
denote the principal $O(n)$-orbit, i.e., the set of points with the trivial
isotropy in $O(n)$, by placing $\circ$ over the hyperspace symbol, e.g., 
$\accentset{\!\circ}{\B}_d$ is the principal orbit for the $O(n)$-action on $\B_p$.

We denote the $O(n)$-orbit space of a hyperspace by the same symbol made bold, e.g.,
$\bm{\B}^{k,\a}$, $\bm{\K}_s$, $\accentset{\,\circ}{\bm{\B}}_p$
are the $O(n)$-orbit spaces of 
${\B}^{k,\a}$, ${\K}_s$, $\accentset{\!\!\circ}{\B}_p$,
respectively.

\section{Brief dictionary of infinite dimensional topology}
\label{sec: inf dim top}

All definitions and notions of infinite 
dimensional topology that are used in this paper 
can be found in~\cite{BRZ-book} 
and are also reviewed below.

Unless stated otherwise any {\em space\,} is 
assumed metrizable and separable, 
and any {\it map\,} is assumed continuous.
A space is {\em Polish\,} if it admits a complete metric.
A {\em subspace\,} is a subset with subspace topology.
If $\Omega$ and $M$ are spaces, then $M$ is an {\em $\Omega$-manifold\,} 
if each point of $M$ has a neighborhood homeomorphic to an open subset of $\Omega$.

A closed subset $B$ of a space $X$ is a {\it $Z$-set\,} if every map $Q\to X$
can be uniformly approximated by a map whose range misses $B$. 
A {\em $\s Z$-set\,} is a countable union of $Z$-sets.
An embedding is a {\it $Z$-embedding\,} if its image is a $Z$-set.

A subspace $A\subset X$ is {\it homotopy dense\,} if there is a homotopy
$h\co X\times \II\to X$ with $h_0=\mathrm{id}$ and 
$h(X\times (0,1])\subset A$.
If $X$ is an ANR, then $A\subset X$ is homotopy dense 
if and only if each map $\II^k\to X$ with $k\in\o$ and $\d \II^k\subset B$ 
can be uniformly approximated
rel boundary by maps $\II^k\to B$~\cite[Theorem 1.2.2]{BRZ-book}. 

If $X$ is an ANR and $B\subset X$ is a closed subset, 
then $B$ is a $Z$-set if and only if 
$X\setminus B$ is homotopy dense~\cite[Theorem 1.4.4]{BRZ-book},~\cite[Proposition V.2.1]{BP-book}.

Given an open cover $\mathcal U$ of a space $X$ two maps 
$f,g\co Y\to X$ are {\it $\mathcal U$-close\,} 
if for every $y\in Y$ there is $U\in\mathcal U$ with $f(y), g(y)\in U$.

A space $X$ has the {\em Strong Discrete Approximation Property\,}
or simply {\em SDAP\,}
if for every open cover $\mathcal U$ of $X$ each map $Q\times \o\to X$
is $\mathcal U$-close to a map $g\co Q\times \o\to X$ such that every point
of $X$ has a neighborhood that intersects at most one set of the family 
$\{g(Q\times\{n\})\}_{n\in\o}$. 

A space $X$ has the {\em Locally Compact Approximation Property\,}
or simply {\em LCAP\,} if for every open cover $\mathcal U$ of $X$
there exists a map $f\co X\to X$ that is $\mathcal U$-close to the identity of $X$
and such that $f(X)$ has locally compact closure.

Let $\M_0$ be the class of compact spaces, and $\M_2$ be
the class of spaces homeomorphic to 
$F_{\sigma\delta}$-sets in compacta, see~\cite[Exercise 3 in 2.4]{BRZ-book}. 
Note that $X\in\M_2$ if and only if the image of any embedding of $X$
into a Polish space is $F_{\s\de}$, see~\cite[Theorem VIII.1.1]{BP-book}. 

Let $\mathcal C$ be a class of spaces, such as $\M_0$ or $\M_2$.
A space $X$ is {\em $\mathcal C$-universal\,} if 
each space in $\mathcal C$ is homeomorphic to a closed subset of $X$.

A space $X$ is {\it strongly $\mathcal C$-universal\,} if 
for every open cover $\mathcal U$ of $X$, 
each $C\in\mathcal C$, every closed subset $B\subset C$, and each
map $f\co C\to X$ that restricts to a 
\mbox{$Z$-embedding} on $B$ there is a $Z$-embedding
$\bar f\co C\to X$ with $\bar f\vert_B=f\vert_B$  such that $f$, 
$\bar f$ are $\mathcal U$-close.

A space $X$ is {\it $\mathcal C$-absorbing\,}
if $X$ is a strongly $\mathcal C$-universal ANR with SDAP
that is the union of countably many $Z$-sets, and 
also the union of a countably many closed subsets 
homeomorphic to spaces in $\mathcal C$.

For example, $\Si$ is $\M_0$-absorbing and $\Si^\o$ is $\M_2$-absorbing,
see~\cite[Exercises 3 in 1.6 and 2.4]{BRZ-book}. Let us list
some properties of $\M_2$-absorbing spaces:
\begin{itemize}
\item (Triangulated $\Si^\o$-manifold)\quad
A space $X$ is $\M_2$-absorbing if and only if $X$ is an {$\Si^\o$-manifold}
if and only if $X$ is homeomorphic to $\Si^\o\times L$
where $L$ is a locally finite simplicial complex~\cite[Corollary 5.6]{BesMog}.
\vspace{2pt}
\item (Uniqueness) Any two homotopy equivalent $\M_2$-absorbing spaces
are homeomorphic, see~\cite[Theorem 3.1]{BesMog}.
\vspace{2pt}
\item ($Z$-set unknotting) If $A$, $B$ are $Z$-sets in a $\M_2$-absorbing
space $X$, then any homeomorphism $A\to B$ that is homotopic to the inclusion
of $A$ into $X$ extends to a homeomorphism of $X$~\cite[Theorem 3.2]{BesMog}.
\end{itemize}

\section{Convex geometry background}
\label{sec: conv geom}

Our main reference for convex geometry is~\cite{Sch-book}.
We give $\R^n$ the Euclidean norm  
$\|v\|=\sqrt{\langle v,v\rangle}$ where
$\langle u,v\rangle=\sum_{i=1}^n u_i v_i$.

The {\em support function\,} $h_D\co \R^n\to\R$ of 
a compact convex non-empty set 
$D$ is defined by $h_D(v)=\sup\{\langle x,v\rangle\,:\, x\in D\}$.
Thus $h_{D+w}(v)=h_D(v)+\langle v,w\rangle$ for any $w\in \R^n$, hence
$D$ has nonempty interior if and only if there is $w\in \R^n$ 
such that $h_D(v)+\langle v,w\rangle>0$ for any $v\neq 0$.
The function $h_D$ is sublinear, i.e., $h_D(tv)=th_D(v)$ for $t>0$
and $h_D(v+w)\le h_D(v)+h_D(w)$. 
Conversely, any sublinear real-valued function on $\R^n$ is the support function of a unique compact
convex set in $\R^n$, see~\cite[Theorem 1.7.1]{Sch-book}.

If $o\in D$ and $u\in \mathbb S^{n-1}$, then $h_D(u)$ is the distance to the origin 
from the support hyperplane to $D$ with outward normal vector $u$. If $o\in\Int\, D$, then 
$h_D(v)>0$ for any $v\neq 0$. In summary, 
{\em support functions of convex bodies whose interior contains $o$
are precisely the sublinear positive functions from $\R^n$ to $\R$.}

Given a $C^{k,\a}$ convex body $D$ in $\R^n$ with $k\ge 1$, let $\nu_D\co \d D\to S^{n-1}$ 
be the {\em Gauss map} given by the outward unit normal. 
For $k\ge 2$ the {\em Gaussian curvature} 
is the determinant of the differential of $\nu_D$.
It is well-known that the Gaussian curvature of a convex body is nonnegative.
Note that $\nu_D$ is $C^{k-1, \a}$. Also 
$\nu_D$ is a $C^1$ diffeomorphism if and only if the Gaussian curvature is positive, and
$\nu_D$ is a homeomorphism if and only if
$D$ is {\em strictly convex\,}, i.e., $\d D$ contains no line segments.
Positive Gaussian curvature implies strict convexity.


A convex body $D$ is strictly convex if and only if $h_D$ is differentiable away from $o$, 
and furthermore, if $D$ is strictly convex, then the restriction of 
$\nabla h_D$ to $S^{n-1}$ equals $\nu_D^{-1}$ so that $h_D$ is $C^1$, 
see~\cite[Corollary 1.7.3]{Sch-book}. 

\begin{lem} 
Let  $k\ge 2$ and $\a\in\II$. For $D\in \K(\R^n)$ the following are equivalent
\begin{itemize}
\item[\textup{(1)}] $D$ is a $C^{k,\a}$ convex body and $\d D$ has positive Gaussian curvature,
\item[\textup{(2)}]
$h_D\vert_{S^{n-1}}$ is $C^{k,\a}$ and
$\nabla h_D\vert_{S^{n-1}}$ has no critical points.
\end{itemize}
\end{lem}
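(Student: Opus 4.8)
The plan is to pass back and forth between the support function $h=h_D$ and the Gauss map, tracking H\"older regularity through the inverse function theorem. I would begin by recording two consequences of homogeneity: since $h$ is positively homogeneous of degree one, $h|_{S^{n-1}}$ is $C^{k,\a}$ if and only if its homogeneous extension is $C^{k,\a}$ on $\R^n\setminus\{o\}$, if and only if $\nabla h$ is $C^{k-1,\a}$ there; and since $\nabla h$ is then homogeneous of degree zero, $\Hess h(u)\,u=0$ wherever $h$ is twice differentiable, so for $u\in S^{n-1}$ the differential of $\nabla h|_{S^{n-1}}$ at $u$ is the self-adjoint endomorphism $\Hess h(u)|_{u^\perp}$ of $u^\perp=T_uS^{n-1}$, which is positive semidefinite because $h$ is convex. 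Hence ``$\nabla h|_{S^{n-1}}$ has no critical points'' means precisely that $\Hess h(u)|_{u^\perp}$ is positive definite for every $u$.

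For $(1)\Rightarrow(2)$: positivity of the Gaussian curvature makes $D$ strictly convex, so $\nabla h|_{S^{n-1}}=\nu_D^{-1}$ by~\cite[Corollary 1.7.3]{Sch-book}. The Gauss map $\nu_D$ is $C^{k-1,\a}$ with everywhere nonsingular differential --- its determinant is the Gaussian curvature --- so, as $k-1\ge 1$, the $C^{k-1,\a}$ inverse function theorem makes $\nu_D$ a $C^{k-1,\a}$ diffeomorphism; therefore $\nabla h|_{S^{n-1}}=\nu_D^{-1}$ is $C^{k-1,\a}$, its homogeneous extension $\nabla h$ is $C^{k-1,\a}$ on $\R^n\setminus\{o\}$, and so $h|_{S^{n-1}}$ is $C^{k,\a}$. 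Finally $\nabla h|_{S^{n-1}}=\nu_D^{-1}$ is a diffeomorphism onto $\d D$, so it has no critical points.

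For $(2)\Rightarrow(1)$ I would first observe that $h\in C^1(\R^n\setminus\{o\})$ forces $D$ to be strictly convex, hence a convex body: a strictly convex compact set of positive dimension less than $n$ equals its own boundary and so contains a segment, while a one-point set makes $\nabla h|_{S^{n-1}}$ constant and thus everywhere critical. Then $F:=\nabla h|_{S^{n-1}}=\nu_D^{-1}\co S^{n-1}\to\d D$ is a bijection, and by the first paragraph it is a $C^{k-1,\a}$ immersion, hence a $C^{k-1,\a}$ embedding since $S^{n-1}$ is compact; thus $\d D$ is \emph{a priori} a $C^{k-1,\a}$ hypersurface, $\nu_D=F^{-1}$ is $C^{k-1,\a}$, and $d(\nu_D)_{F(u)}=\bigl(\Hess h(u)|_{u^\perp}\bigr)^{-1}$ has positive determinant. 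The remaining step, and the one I expect to be the crux, is to upgrade $\d D$ from class $C^{k-1,\a}$ to class $C^{k,\a}$ --- that is, to convert the surplus derivative carried by $\nu_D$ (one more than the Gauss map of a generic $C^{k-1,\a}$ hypersurface would have) into extra regularity of $\d D$ itself. For this I would work locally: near $x_0\in\d D$ write $\d D$ as a graph $x_n=\psi(x')$ of a $C^{k-1,\a}$ function over the tangent hyperplane at $x_0$, so the outward unit normal along the graph is $\nu=(-\nabla\psi,1)/\sqrt{1+|\nabla\psi|^2}$, with $\nabla\psi$ vanishing at $x_0$; composing the $C^{k-1,\a}$ map $\nu_D$ with the $C^{k-1,\a}$ graph chart shows $\nu$ is a $C^{k-1,\a}$ function of $x'$, and since $\nu_n\ne 0$ near $x_0$, solving $\nabla\psi=-\nu'/\nu_n$ gives $\nabla\psi\in C^{k-1,\a}$, hence $\psi\in C^{k,\a}$. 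Therefore $\d D$ is a $C^{k,\a}$ convex body whose Gaussian curvature $\det d\nu_D$ is positive.

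Apart from that last regularity upgrade, the argument is bookkeeping: the homogeneity observations of the first paragraph, repeated use of the $C^{m,\a}$ inverse function theorem and of the closure of $C^{m,\a}$ under composition and division, and the cited relations between strict convexity, positive curvature, the Gauss map, and differentiability of the support function.
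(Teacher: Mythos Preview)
Your proof is correct and follows essentially the same route as the paper's: both directions go through $\nabla h_D|_{S^{n-1}}=\nu_D^{-1}$, the $C^{k-1,\a}$ inverse function theorem, and the observation that $\nu_D$ being a $C^{k-1,\a}$ diffeomorphism forces $\d D$ to be one class smoother than it appears at first. The only difference is cosmetic: the paper outsources the regularity upgrade $\d D\in C^{k-1,\a}\Rightarrow\d D\in C^{k,\a}$ to \cite[Lemma~5.4]{Gho}, whereas you write out the same local graph argument (solving $\nabla\psi=-\nu'/\nu_n$) explicitly.
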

\begin{proof}
Let us show $\mathrm{(1)}\implies\mathrm{(2)}$. 
Since $\d D$ is $C^{k,\a}$, the Gauss map 
$\nu_D$ is $C^{k-1,\a}$. 
Nonvanishing of the Gaussian curvature of $\d D$ means that
$\nu_D$ is a $C^1$ diffeomorphism, and hence a $C^{k-1,\a}$ diffeomorphism
by the inverse function theorem, see~\cite[Theorem 2.1]{BHS-holder}. 
Now $\nabla h_D\vert_{S^{n-1}}=\nu_D^{-1}$, 
implies that
$h_D\vert_{S^{n-1}}$ is $C^{k,\a}$ and
$\nabla h_D\vert_{S^{n-1}}$ has no critical points. 

To show $\mathrm{(2)}\implies\mathrm{(1)}$ first note that the assumption
$h_D\vert_{S^{n-1}}$ is $C^{k,\a}$ and the homogeneity
of $h_D$ shows that $h_D$ is $C^{k,\a}$ on $\R^n\setminus\{o\}$, and in particular, is
differentiable there. The latter implies that every support hyperplane meets $D$ in
precisely one point, see~\cite[Corollary 1.7.3]{Sch-book}, and in particular $D$ has 
nonempty interior and $\nu_D$ is a homeomorphism. As was mentioned above,
$\nabla h_D\vert_{S^{n-1}}=\nu_D^{-1}$, so
using the assumptions we conclude that $\nu_D^{-1}$ is a $C^{k-1,\a}$ 
diffeomorphism and $\d D$ is a $C^{k-1,\a}$
submanifold. These two statements in fact imply that $\d D$ is $C^{k,\a}$, see 
e.g., the proof of~\cite[Lemma 5.4]{Gho}. Finally, non-vanishing of the Gaussian curvature
is equivalent to $\nu_D$ being an immersion.
\end{proof}

The set of $C^{k,\a}$ convex bodies of positive Gaussian curvature is convex
under scaling and Minkowski addition, see~\cite[Proposition 5.1]{Gho} for $\a=0$,
and~\cite{BelJia} in general.

We make heavy use of 
the map $\sf\co\K(\R^n)\to C({\mathbb S}^{n-1})$ given by $\sf (D)=h_D\vert_{{\mathbb S}^{n-1}}$ which
enjoys the following properties:
\begin{itemize}
\item
$\sf$ is an isometry onto its image, where as usual the domain has 
the Hausdorff metric and the co-domain has the metric induced by
the $C^0$ norm~\cite[Lemma 1.8.14]{Sch-book}.
\item
the image of $\sf$ is closed~\cite[Theorem 1.8.15]{Sch-book} and convex~\cite[pp.45 and 48]{Sch-book}
in $C({\mathbb S}^{n-1})$.
\item
$\sf $ is Minkowski linear~\cite[Section 3.3]{Sch-book}, 
i.e., $\sf(a\,D+b\,K)=a\sf(D)+b\sf(K)$ for any nonnegative $a$, $b$ and any $D,K\in\K$.
\end{itemize}

The {\em Steiner point} is a map $s\co\mathcal{K}(\R^n)\to\R^n$   
given by \[
s(D)=\frac{1}{\vol(\mathbb B^n)}\int_{S^{n-1}} u\, h_D(u)\, du
\]
which has the following properties:
\begin{itemize}\vspace{-2pt}
\item 
$s$ is Lipschitz~\cite[p.66, Section 1.8]{Sch-book},\vspace{2pt}
\item
$s$ is invariant under rigid motions, i.e., $s(gD)=gs(D)$ for any 
$g\in\Iso(\R^n)$~\cite[p.50, Section 1.7]{Sch-book},\vspace{2pt}
\item
$s$ is Minkowski linear, i.e., $s(aD+bK)=as(D)+bs(K)$ for any positive reals 
$a,b$ and $D, K\in\mathcal K(\R^n)$. \vspace{2pt}
\item
$s(D)$ lies in the relative interior of $D$~\cite[p.315, Section 5.2.1]{Sch-book},\vspace{2pt}
\item
if $D$ is a point, then $s(D)=D$~\cite[p.50, Section 1.7]{Sch-book},\vspace{2pt}
\item
$s$ is the only continuous  Minkowski linear, $\Iso(\R^n)$-invariant 
map from $\mathcal K(\R^n)$ to $\R^n$~\cite[Theorem 3.3.3]{Sch-book}. 
\end{itemize}
Thus the hyperspace $\K_s$ of convex compacta in $\R^n$ with Steiner point at $o$
is an  $O(n)$-invariant closed convex subset of 
$\mathcal K$ and the map 
\begin{equation}
\label{form: st pt times Rn}
\mathcal K\to\R^n\times\K_s
\end{equation}
sending $D$ to $(s(D), D-s(D))$ is a homeomorphism. 

\begin{lem}
\label{lem: st point}
The retraction ${\mathbf r}\co\mathcal K\to\K_s$ given by ${\mathbf r}(D)=D-s(D)$ 
is equivariant under the homomorphism $\mathrm{Iso}(\R^n)\to O(n)$ 
given by $\phi\to\phi-\phi(0)$, and descends to a
homeomorphism $\bar{\mathbf r}\co\mathcal K/\mathrm{Iso}(\R^n)\to\K_s/O(n)$.
\end{lem}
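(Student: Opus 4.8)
The plan is to verify three things: that $\mathbf r$ is well-defined and continuous (already established, since $s$ is Lipschitz hence continuous, and $\mathbf r(D)=D-s(D)$ lands in $\K_s$ because $s(D-s(D))=s(D)-s(D)=0$ by Minkowski linearity and the fact that $s$ of a point is that point); that $\mathbf r$ is equivariant under $\phi\mapsto\phi-\phi(0)$; and that the induced map on orbit spaces is a homeomorphism. The equivariance is the computational core and I expect it to be the only place requiring care.

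First I would set up the equivariance. Write an arbitrary $\phi\in\mathrm{Iso}(\R^n)$ as $\phi(x)=Ax+b$ with $A\in O(n)$ and $b=\phi(0)$, so the homomorphism in question sends $\phi$ to $A=\phi-\phi(0)$. Using that $s$ commutes with rigid motions, $s(\phi D)=\phi(s(D))=A\,s(D)+b$, hence
\[
\mathbf r(\phi D)=\phi D-s(\phi D)=\phi D-A\,s(D)-b=A(D-s(D))=A\,\mathbf r(D),
\]
where the third equality uses $\phi D=AD+b$. This is exactly equivariance of $\mathbf r\co\mathcal K\to\K_s$ with respect to the $\mathrm{Iso}(\R^n)$-action on the source and the $O(n)$-action on the target via $\phi\mapsto A$. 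In particular $\mathbf r$ carries $\mathrm{Iso}(\R^n)$-orbits into $O(n)$-orbits, so it descends to a continuous map $\bar{\mathbf r}\co\mathcal K/\mathrm{Iso}(\R^n)\to\K_s/O(n)$.

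Next I would produce an inverse. The inclusion $\iota\co\K_s\hookrightarrow\mathcal K$ is $O(n)$-equivariant (where $O(n)\le\mathrm{Iso}(\R^n)$ acts on $\mathcal K$ by restriction), since $\K_s$ is $O(n)$-invariant, so it descends to a continuous map $\bar\iota\co\K_s/O(n)\to\mathcal K/\mathrm{Iso}(\R^n)$. Now $\mathbf r\circ\iota=\mathrm{id}_{\K_s}$ because $s$ of an element of $\K_s$ is $0$, giving $\bar{\mathbf r}\circ\bar\iota=\mathrm{id}$. Conversely, for $D\in\mathcal K$ the element $\iota(\mathbf r(D))=D-s(D)$ is the translate of $D$ by $-s(D)$, hence lies in the same $\mathrm{Iso}(\R^n)$-orbit as $D$; therefore $\bar\iota\circ\bar{\mathbf r}=\mathrm{id}$ on $\mathcal K/\mathrm{Iso}(\R^n)$. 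Thus $\bar{\mathbf r}$ and $\bar\iota$ are mutually inverse continuous maps, so $\bar{\mathbf r}$ is a homeomorphism.

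The only genuine subtlety — the ``main obstacle,'' though it is mild — is bookkeeping of the two group actions: one must be careful that the $O(n)$-action appearing on $\K_s/O(n)$ is the restriction of the $\mathrm{Iso}(\R^n)$-action and matches the image of the homomorphism $\phi\mapsto\phi-\phi(0)$, and that the orbit maps are genuinely continuous and open so that the factorizations through the quotients are legitimate (this is automatic: orbit maps of topological group actions are open, and a continuous equivariant map between $G$-spaces always descends continuously to orbit spaces). Everything else is a direct consequence of the Minkowski linearity, motion-invariance, and the normalization $s(\{p\})=p$ of the Steiner point recalled above, together with the homeomorphism $(\ref{form: st pt times Rn})$.
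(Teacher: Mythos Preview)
Your proof is correct and follows essentially the same route as the paper's: write isometries as $x\mapsto Ax+b$, verify equivariance of $\mathbf r$ directly from the rigid-motion invariance and Minkowski linearity of the Steiner point, and use the inclusion $\K_s\hookrightarrow\K$ to furnish the continuous inverse of $\bar{\mathbf r}$. The paper's proof is simply terser, declaring the equivariance and bijectivity ``straightforward'' and then observing that continuity of $\mathbf r$ and of the inclusion yields continuity of $\bar{\mathbf r}$ and $\bar{\mathbf r}^{\,-1}$.
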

\begin{proof}
The equivariance of ${\mathbf r}$ and bijectivity of $\bar{\mathbf r}$ is straightforward
from the properties of the Steiner point and the fact that any
isometry of $\R^n$ can be written as $x\to Ax+b$ for some unique 
$A\in O(n)$ and $b\in \R^n$.
Bijectivity of $\bar{\mathbf r}$ implies that 
the inclusion $i\co\mathcal \K_s\to\K$ descends to $\bar{\mathbf r}^{\,-1}$, 
and since ${\mathbf r}$, $i$ are continuous, so are $\bar{\mathbf r}$, $\bar {\mathbf r}^{\,-1}$.
\end{proof}

\begin{lem}
\label{lem: add ball}
If $t>0$ and $D\in\K_s$ with $h_D\vert_{_{{\mathbb S}^{n-1}}}\in C^\infty$, 
then $D+t\,\mathbb B^n\in \B_p$.
\end{lem}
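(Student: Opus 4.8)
The plan is to rephrase the assertion via support functions and then apply the characterisation of $C^{k,\a}$ convex bodies of positive Gaussian curvature proved in the first lemma of Section~\ref{sec: conv geom}. First I would observe that $D+t\,\mathbb B^n\in\K_s$: since the Steiner point $s$ is Minkowski linear and $\Iso(\R^n)$-equivariant, $s(\mathbb B^n)$ is fixed by every element of $O(n)$ and hence equals $o$, so $s(D+t\,\mathbb B^n)=s(D)+t\,s(\mathbb B^n)=o$; and $D+t\,\mathbb B^n$ contains the ball $x_0+t\,\mathbb B^n$ of positive radius for any $x_0\in D$, so it is a convex body.

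Next I would compute the support function. Since $h_{\mathbb B^n}=\|\cdot\|$, Minkowski linearity of $\sf$ gives $\sf(D+t\,\mathbb B^n)=\sf(D)+t$ on $\mathbb S^{n-1}$, which is $C^\infty$ by hypothesis; by $1$-homogeneity the support function $h:=h_{D+t\,\mathbb B^n}=h_D+t\,\|\cdot\|$ is then $C^\infty$ on $\R^n\setminus\{o\}$. By the cited lemma, applied with $\a=0$ and every $k\ge 2$, it remains only to verify that $\nabla h\vert_{\mathbb S^{n-1}}$ has no critical points.

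For this I would use that for a $1$-homogeneous $C^2$ function $h$ the Euler relation applied to the $0$-homogeneous map $\nabla h$ gives $D^2h(u)\,u=0$, so the Euclidean Hessian $D^2h(u)$, being symmetric, has image in $u^{\perp}$ and restricts to an endomorphism of $u^{\perp}=T_u\mathbb S^{n-1}$; this restriction is exactly the differential of $\nabla h\vert_{\mathbb S^{n-1}}$ at $u$. Splitting $h=h_D+t\,\|\cdot\|$, both $h_D$ and $\|\cdot\|$ are sublinear, hence convex on $\R^n$, so $D^2h_D(u)\ge 0$ and $D^2\|\cdot\|(u)\ge 0$ wherever defined; moreover evaluating $\partial_i\partial_j\|v\|=\delta_{ij}\|v\|^{-1}-v_iv_j\|v\|^{-3}$ at $v=u$ shows $D^2\|\cdot\|(u)$ is the orthogonal projection onto $u^{\perp}$, whose restriction to $u^{\perp}$ is the identity. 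Hence $D^2h(u)\vert_{u^{\perp}}\ge t\,\mathrm{Id}_{u^{\perp}}$ is invertible for every $u\in\mathbb S^{n-1}$, so $\nabla h\vert_{\mathbb S^{n-1}}$ is an immersion. The lemma then yields that $D+t\,\mathbb B^n$ is a $C^k$ convex body with boundary of positive Gaussian curvature for every $k\ge 2$, hence $C^\infty$, and combined with the first paragraph this gives $D+t\,\mathbb B^n\in\B_p$.

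I do not expect a genuine obstacle here. The only points requiring a little care are the identification of the differential of $\nabla h\vert_{\mathbb S^{n-1}}$ with the restricted Hessian $D^2h(u)\vert_{u^{\perp}}$ and the companion fact that a $C^2$ support function has positive semidefinite Hessian off the origin; both are standard, and everything else is bookkeeping with the properties of $s$ and $\sf$ recalled above.
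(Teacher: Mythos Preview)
Your proof is correct and follows essentially the same route as the paper: compute $h_{D+t\mathbb B^n}=h_D+t\|\cdot\|$, note it is $C^\infty$ off the origin, and verify that its Hessian is positive definite on $u^\perp=T_u\mathbb S^{n-1}$ by splitting into the nonnegative piece coming from convexity of $h_D$ and the explicitly computed piece $t\,\mathrm{Id}_{u^\perp}$ from $\|\cdot\|$. You are in fact a bit more careful than the paper, which asserts positive definiteness of $\mathrm{Hess}\,h_{D+B}$ without spelling out the tangential restriction, and you also add the (easy but needed) verification that the Steiner point of $D+t\mathbb B^n$ is $o$.
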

\begin{proof} Set $B=t\,\mathbb B^n$.
From $h_{D+B}=h_D+h_B$ we conclude that $h_{D+B}$ is $C^\infty$.
The equality
$\nu_{_{D+B}}^{-1}=\nabla h_{_{D+B}}\vert_{S^{n-1}}$
implies that $\nu_{_{D+B}}^{-1}$ is $C^\infty$.
To show that $D+B$ is in $\B_p$
let us check that $\nu_{_{D+B}}$ is a $C^\infty$ diffeomorphism,
which by the inverse function theorem is equivalent to 
$\nu_{_{D+B}}^{-1}$ having no critical points, i.e., that 
\[
\mathrm{Hess}\, h_{_{D+B}}=
\mathrm{Hess}\, h_{_{D}} + \mathrm{Hess}\, h_{_{B}}\vspace{2pt}
\] 
is positive definite. The Hessian of any convex $C^2$ function is
nonnegative definite which applies to $\mathrm{Hess}\ h_{_{D}}$
while $\mathrm{Hess}\, h_{_{B}}$ is positive definite
e.g., because $\nabla h_{_{t\,{\mathbb B}^n}}(x)=t\,x/\|x\|$
is a diffeomorphism on $\mathbb S^{n-1}$. 
\end{proof}

A key tool in this paper is the {\em Schneider's regularization\,} which is immediate from
the remark after the proof of~\cite[Theorem 3.4.1]{Sch-book}. 

\begin{lem}
\label{lem: regularization} 
There is a continuous map $\r\co\K_s\times\II\to\K_s$ 
such that $\r(\cdot,0)$ is the identity map, and  for each $t>0$
the map $\r(\cdot, t)$ is $O(n)$-equivariant and has image in $\B_p$.
\end{lem}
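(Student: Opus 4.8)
The plan is to invoke Schneider's construction of a regularization operator for support functions, which acts by convolution with a suitable approximate identity on $S^{n-1}$. Concretely, Schneider produces (see the remark after \cite[Theorem 3.4.1]{Sch-book}) a family of smoothing operators that sends the support function of a convex body $D$ to the support function of a body $\rho_t(D)$ depending continuously on $D$ and on a parameter $t\in\II$, with $\rho_0=\mathrm{id}$ and with $\rho_t(D)$ of class $C^\infty$ and of positive Gaussian curvature for $t>0$. Because the construction is a convolution against an $O(n)$-invariant kernel, it commutes with the $O(n)$-action, and because it is linear on support functions it preserves sublinearity, hence outputs genuine convex bodies. So the map $D\mapsto\rho_t(D)$ already lands in $C^\infty$ bodies of positive curvature, which after recentering at the Steiner point is $\B_p$.

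First I would set up the map explicitly via $\sf$: identify $\K_s$ with a closed convex subset of $C(S^{n-1})$, let $\rho_t$ be Schneider's operator on this function space, and check continuity of $(D,t)\mapsto\rho_t(D)$ jointly — this is routine since convolution kernels can be chosen to depend continuously on $t$ and $\sf$ is an isometry onto its image. Then I would verify the three output properties: (i) $\rho_t(D)$ is $C^\infty$ with positive Gaussian curvature for $t>0$ (this is exactly Schneider's assertion, using Lemma~\ref{lem: add ball}-type Hessian positivity or his direct estimate), (ii) $O(n)$-equivariance (invariance of the kernel under rotations), and (iii) $\rho_0=\mathrm{id}$. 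The one extra step not in Schneider is the recentering: since Schneider's operator need not fix the Steiner point at the origin, I would postcompose with the retraction $\mathbf r(D)=D-s(D)$ of Lemma~\ref{lem: st point}. Because $s$ is Minkowski linear, $\mathrm{Iso}(\R^n)$-invariant, and continuous, the composite $\mathbf r\circ\rho_t$ is continuous, $O(n)$-equivariant (as $\mathbf r$ is equivariant over $\phi\mapsto\phi-\phi(0)$, which restricts to the identity on $O(n)$), still lands in $C^\infty$ bodies of positive curvature, and now lies in $\K_s$; hence in $\B_p$.

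The main obstacle is not really an obstacle at all in the sense of a gap to be filled — it is simply a matter of citing Schneider correctly and adapting the output to the Steiner-point normalization. If one wanted a self-contained argument, the substantive point would be proving that a single fixed mollification of a support function produces positive Hessian: the cleanest route is to note that Schneider's kernel can be arranged so that $\rho_t(D)$ differs from a genuine convex body by the addition of a ball-like term (or, more precisely, so that the smoothed function has strictly positive second fundamental form on $S^{n-1}$), which is where Lemma~\ref{lem: add ball} and the nonnegativity of $\mathrm{Hess}\, h_D$ for convex $D$ do the work. Given that Lemma~\ref{lem: add ball} is already available, the cleanest exposition is to keep the proof short: cite \cite[Theorem 3.4.1]{Sch-book} and its following remark for the existence of $\rho$ with the stated regularity and equivariance, and only spell out the recentering via the Steiner point.
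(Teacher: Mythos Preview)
Your proposal is correct and follows essentially the same approach as the paper: cite Schneider's Theorem 3.4.1 (and the remark after it) for the smoothing operator, add the ball term $t\,\mathbb{B}^n$, and invoke Lemma~\ref{lem: add ball} for positive Gaussian curvature and $O(n)$-equivariance of the kernel. One small simplification: the recentering step you propose is in fact unnecessary, since Schneider's operator is $\Iso(\R^n)$-equivariant and Minkowski linear, so by the uniqueness characterization of the Steiner point it already preserves $\K_s$---the paper uses this implicitly rather than postcomposing with $\mathbf r$.
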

\begin{proof}
Fix a nonnegative function 
$\psi\in C^\infty(\R)$
with support in $[1,2]$ and such that $\int_\R \psi=1$.
For $t\in (0,1]$ and $D\in \K$ let $\r(D)=T(D)+t\,{\mathbb B}^n$
where $T(D)$ is the convex set with support function
\[
h_{_{T(D)}}(x)=\int_{\R^n} h_D(x+z\|x\|)\,\psi(\|z\|/t)\, t^n\, dz.
\]
Note that $h_{_{\r(D)}}=h_{_{T(D)}}+h_{_{t\,{\mathbb B}^n}}$
and $h_{_{t\,{\mathbb B}^n}}(x)=t\,\|x\|$.
It follows from~\cite[Theorem 3.4.1]{Sch-book} that 
$h_{_{T(D)}}$ is $C^\infty$ on $\R^n\setminus\{o\}$
and $T$ is $O(n)$-equivariant and continuous on $\K_s\times\II$.
These properties are clearly inherited by $\r$.
The image of $\rho$ is in $\B_p$ by Lemma~\ref{lem: add ball}.
\end{proof}

Another useful operation is what we call
{\em $(\e, u)$-truncating\,} of a $C^1$ convex body $K$,
where $\e\in\II$ and $u\in\mathbb S^{n-1}$, defined
as the convex set $K_{u,\e}$ obtained
by removing all points of $K$
that lie in the open $\e$-neighborhood of the support hyperplane to $K$
with the normal vector $u$, and then subtracting the Steiner point
of the result. For each $u$ the map $\K_s\times \II\to\K_s$ 
that sends $(K, \e)$ to $K_{u,\e}$ is continuous.
If $\e$ is smaller than the length of the projection
of $K$ onto the line $\mathrm{span}\{u\}$, then $K_{u,\e}$
is not $C^1$.

\section{The hyperspace of $\R^n$ of convex compacta of dimension $\ge 2$}
\label{sec: hyperspace of dim >1}

In this section we determine the homeomorphism types of $\K_s$ and $\K_s^{2\le n}$
which is straightforward 
but apparently not in the literature.
Our interest in $\K_s^{2\le n}$ stems from the fact that
$\bm{\K}_s^{2\le 3}(\R^3)$ can be identified with the Gromov-Hausdorff
space of convex surfaces, see~\cite{Bel-ghs2}.

\begin{lem}
$\K_s$ is homeomorphic to $Q\times [0,1)$.
\end{lem}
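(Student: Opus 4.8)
The plan is to identify $\K_s$ as a convex subset of a Banach space, recognize it (via its image under $\sf$) as a closed convex subset of $C(\mathbb S^{n-1})$ that is infinite-dimensional and non-locally-compact but $\s$-compact-exhaustible, and then invoke the Keller-type / Klee-type classification of convex sets. Concretely, $\sf$ embeds $\K_s$ isometrically onto a closed convex subset $\sf(\K_s)\subset C(\mathbb S^{n-1})$, so $\K_s$ is a closed convex subset of a separable Banach space. A classical theorem (Keller's theorem together with its refinements; see e.g.\ \cite[Chapter 3]{BRZ-book} or the Dobrowolski--Toru\'nczyk classification of convex sets) says that an infinite-dimensional, locally compact, $\s$-compact closed convex subset of a Fr\'echet space is homeomorphic to $Q\times[0,1)$, while a closed convex set that is \emph{not} locally compact is homeomorphic to $Q$, $[0,1)\times Q$, $Q\setminus\{pt\}$, or similar depending on completeness. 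So the first job is to pin down exactly which of these $\K_s$ is.

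First I would check that $\K_s$ is locally compact and $\s$-compact. For $\s$-compactness: $\K_s=\bigcup_{m} \K_s(m\mathbb B^n)$, the sets in $\K_s$ contained in the ball of radius $m$, and each $\K_s(m\mathbb B^n)$ is compact by Blaschke's selection theorem (the bound on the Steiner point being automatic since $s(D)\in D$). For local compactness: given $D\in\K_s$ of circumradius $R$, every $K\in\K_s$ within Hausdorff distance $1$ of $D$ lies in $(R+2)\mathbb B^n$ (again using $s(K)\in K$ so $o\in K$), so a closed neighborhood of $D$ sits inside the compact set $\K_s((R+2)\mathbb B^n)$; hence $\K_s$ is locally compact Polish. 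Next, $\K_s$ is convex and infinite-dimensional: convexity is recorded in the excerpt (the image of $\sf$ is convex and $\K_s=\sf^{-1}$ of a convex set, or directly, Minkowski-combinations of Steiner-centered bodies are Steiner-centered by Minkowski linearity of $s$), and infinite-dimensionality follows because $\sf(\K_s)$ contains support functions of all smooth bodies, spanning an infinite-dimensional affine subspace of $C(\mathbb S^{n-1})$. Finally $\K_s$ is \emph{not} compact (scale a fixed body to arbitrarily large size).

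At that point $\K_s$ is a $\s$-compact, locally compact, non-compact, infinite-dimensional, closed convex subset of a Banach space, and the classification theorem for such convex sets gives exactly $\K_s\cong Q\times[0,1)$. I would cite the relevant statement from \cite{BRZ-book}, or route it through the standard fact that a non-compact locally compact convex set $C$ in a Fr\'echet space satisfies $C\cong C\times[0,1)$ (the $[0,1)$ factor coming from the non-compactness/ray direction, cf.\ Lemma~\ref{lem: prod with [0,1)} in spirit) together with Keller's theorem that an infinite-dimensional compact convex subset of a Fr\'echet space is $\cong Q$; writing $\K_s$ as an increasing union of such Hilbert cubes $\K_s(m\mathbb B^n)$ realizes it as a ``half-open Hilbert-cube column'' $Q\times[0,1)$.

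The main obstacle, and the only place requiring real care, is the last step: exhibiting the $[0,1)$-factor honestly. The set-theoretic exhaustion $\K_s=\bigcup_m \K_s(m\mathbb B^n)$ by nested Hilbert cubes is not by itself enough — one must know each inclusion $\K_s(m\mathbb B^n)\hookrightarrow\K_s((m+1)\mathbb B^n)$ is a $Z$-embedding onto a $Z$-set and that the union has the correct (``boundary set'') position, so that the infinite mapping-telescope is $Q\times[0,1)$ rather than, say, $Q$ or $\Si$. I expect the cleanest route is to quote the Dobrowolski--Toru\'nczyk / Curtis theorem classifying $\s$-compact locally compact convex sets directly, reducing the whole proof to the three verified bullet points (convex, locally compact, $\s$-compact, infinite-dimensional, non-compact) plus one citation; the geometry above supplies all of those inputs.
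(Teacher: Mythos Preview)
Your strategy is the paper's strategy: embed $\K_s$ via $\sf$ as a closed convex subset of $C(\mathbb S^{n-1})$, check that it is an infinite-dimensional locally compact AR, and invoke the classification of such convex sets. Your direct verifications of local compactness and $\sigma$-compactness (via Blaschke selection and the bound $o=s(K)\in K$) are fine and are slightly more hands-on than what the paper does.

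The gap is in the final step. The ``classical theorem'' you state is not correct as written: the hypotheses \emph{infinite-dimensional, closed, convex, locally compact, $\sigma$-compact, non-compact} do \emph{not} force the homeomorphism type $Q\times[0,1)$. The relevant classification (this is \cite[Theorem 5.1.3]{BRZ-book}, which the paper cites) says that an infinite-dimensional locally compact closed convex AR in a linear metric space is homeomorphic to one of $Q\times[0,1)$ or $Q\times\R^k$, $k\ge 0$, and these are pairwise non-homeomorphic. All of the models $Q\times\R^k$ with $k\ge 1$ satisfy every hypothesis you verified, so your list of properties does not single out $Q\times[0,1)$. Your diagnosis in the last paragraph (worrying about the $Z$-set position of the exhaustion by Hilbert cubes) is aimed at the wrong difficulty; the real issue is ruling out a Euclidean factor.

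The paper closes this gap by importing the Nadler--Quinn--Stavrakas result \cite{NQS} that $\K$ is homeomorphic to the once-punctured Hilbert cube, hence to $Q\times[0,1)$. Combined with the product splitting $\K\cong\R^n\times\K_s$ from (\ref{form: st pt times Rn}), this forces $\K_s\not\cong Q\times\R^k$ for any $k\ge 0$ (else $\K\cong Q\times\R^{n+k}$, contradicting $\K\cong Q\times[0,1)$), and the only remaining option on the list is $Q\times[0,1)$. If you want to avoid \cite{NQS} you need an independent reason why $\K_s$ carries no $\R$-factor; nothing in your proposal supplies one.
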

\begin{proof}
By~\cite{NQS} the hyperspace $\K$ is homeomorphic to a once-punctured Hilbert cube,
which in turn is homeomorphic to $Q\times [0,1)$, see~\cite[p.118]{BP-book}
or~\cite[Theorem 25.1]{Cha-book}.
Since $\R^n\times\K_s$ is homeomorphic to $\K$,
we conclude that $\K_s$ is infinite dimensional and locally compact.
Since $\K$ is an AR that retracts onto $\K_s$, the latter is an AR.

Properties of $\sf$ reviewed in Section~\ref{sec: conv geom} imply that
$\K_s$ is homeomorphic to $\sf(\K_s)$ which is a closed convex subset
of $C(\mathbb S^{n-1})$. 
Now~\cite[Theorem 5.1.3]{BRZ-book} classifies 
closed convex subsets in linear metric spaces
that are infinite-dimensional locally compact absolute retracts
as spaces homeomorphic to $Q\times [0,1)$ and $Q\times\R^k$, $k\ge 0$.
These spaces are pairwise
non-homeomorphic, see~\cite[p.116, Theorem III.7.1]{BP-book} or~\cite[Theorem 25.1]{Cha-book}.
Since $\R^n\times\K_s$ is homeomorphic to $\K$,
which in turn is homeomorphic to $Q\times [0,1)$, the hyperspace $\K_s$
cannot be homeomorphic to $Q\times\R^k$, and hence it must be
homeomorphic to $Q\times [0,1)$.
\end{proof}

The homeomorphism  $\K\to\R^n\times\K_s$ given by (\ref{form: st pt times Rn})
is dimension preserving, so it restricts to the homeomorphism 
$\K^{2\le n}\to\R^n\times\K_s^{2\le n}$. 

\begin{lem}
\label{lem: Z-set dim at most 1}
\begin{enumerate}
\item[\textup{(1)}]
$\K_s^{0\le 1}$ is a $Z$-set in $\K_s$.\vspace{2pt}
\item[\textup{(2)}] 
The one-point compactification of $\K_s^{0\le 1}$ is a $Z$-set in 
the one-point compactification of $\K_s$ which is is homeomorphic to $Q$. \vspace{2pt}
\item[\textup{(3)}]
$\K_s^{0\le 1}$  homeomorphic to 
$\R^n/\{\pm 1\}$, the open cone over $RP^{n-1}$.\vspace{3pt}
\item[\textup{(4)}] 
$\K_s^{2\le n}$ is a contractible $Q$-manifold which is obtained from $Q$
be deleting a $Z$-set homeomorphic to the suspension over $RP^{n-1}$.
\end{enumerate}
\end{lem}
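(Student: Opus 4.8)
The plan is to prove the four parts in the order (3), (1), (2), (4), since the explicit description of $\K_s^{0\le 1}$ obtained in (3) is what makes the rest go. For (3), note that the only $0$-dimensional set in $\K_s$ is $\{o\}$ (a point is its own Steiner point), and the $1$-dimensional sets in $\K_s$ are precisely the segments $[-v,v]$ with $v\neq o$ (the Steiner point of a segment is its midpoint). Thus $v\mapsto[-v,v]$, with $o\mapsto\{o\}$, is a continuous surjection $\R^n\to\K_s^{0\le 1}$ whose point preimages are the sets $\{\pm v\}$, so it descends to a continuous bijection $\R^n/\{\pm 1\}\to\K_s^{0\le 1}$. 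Since $h_{[-v,v]}(u)=|\langle v,u\rangle|$, composing with the isometric embedding $\sf$ identifies this bijection with $v\mapsto f_v:=|\langle v,\cdot\rangle|$ into $C(S^{n-1})$, and $\|f_v\|_\infty=\|v\|$ shows this map is proper, hence a closed embedding of $\R^n/\{\pm 1\}$. Finally, realizing $\R^n$ as the open cone $S^{n-1}\times[0,\infty)/(S^{n-1}\times\{0\})$ and passing the antipodal action to the quotient exhibits $\R^n/\{\pm 1\}$, hence $\K_s^{0\le 1}$, as the open cone over $RP^{n-1}$.

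For (1): the set $\{D\in\K(\R^n):\dim D\ge 2\}$ is open, since three affinely independent points of $D$ persist in every $D'$ close to $D$ in the Hausdorff metric; hence $\K_s^{0\le 1}$ is closed in $\K_s$. Its complement $\K_s^{2\le n}$ is homotopy dense in $\K_s$ via $h\co\K_s\times\II\to\K_s$, $h(D,t)=D+t\,\mathbb B^n$: this is jointly continuous, equals the identity at $t=0$, preserves the Steiner point, and for $t>0$ produces a convex body, hence a member of $\K_s^{2\le n}$ (this is where $n\ge 2$ is used). As $\K_s$ is an AR (established above), \cite[Theorem 1.4.4]{BRZ-book} yields that the closed set $\K_s^{0\le 1}$ is a $Z$-set in $\K_s$.

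For (2): by the previous lemma $\K_s\cong Q\times[0,1)$ is a once-punctured Hilbert cube, hence (homeomorphic to) an open subset of $Q$, and its one-point compactification $(\K_s)^+$ is $Q$ (the once-punctured Hilbert cube is recompactified by filling the point back in); in particular $\{\infty\}$ is a $Z$-set in the AR $(\K_s)^+$. Since $\K_s^{0\le 1}$ is closed and non-compact in $\K_s$, its closure $(\K_s^{0\le 1})^+$ in $(\K_s)^+$ equals $\K_s^{0\le 1}\cup\{\infty\}$ carrying exactly the one-point-compactification topology, it is closed, and its complement is $\K_s^{2\le n}$; so by \cite[Theorem 1.4.4]{BRZ-book} it remains to show $\K_s^{2\le n}$ is homotopy dense in $(\K_s)^+$. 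Let $g\co(\K_s)^+\times\II\to(\K_s)^+$ witness that $\K_s$ is homotopy dense in $(\K_s)^+$, and put $H(x,t)=g(x,t)+t\,\mathbb B^n$ for $t>0$ and $H(x,0)=x$; then $H_0=\id$ and $H$ has image in $\K_s^{2\le n}$ for $t>0$. Continuity of $H$ at points over $\infty$ holds because compact subsets of $\K_s$ consist of uniformly bounded sets, so, using $D\subseteq D+t\,\mathbb B^n$ and the compactness of $\{D\in\K_s:D\subseteq R\,\mathbb B^n\}$, thickening by $t\,\mathbb B^n$ cannot push a sequence approaching $\infty$ into a compact subset of $\K_s$. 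Hence $(\K_s^{0\le 1})^+$ is a $Z$-set in $(\K_s)^+\cong Q$.

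For (4): $\K_s^{2\le n}$ is open in the $Q$-manifold $\K_s$, hence itself a $Q$-manifold, and is homotopy dense in the contractible space $\K_s$, hence contractible; and $\K_s^{2\le n}=(\K_s)^+\setminus(\K_s^{0\le 1})^+$ presents it as $Q$ with a $Z$-set deleted, that $Z$-set being the one-point compactification of the open cone over $RP^{n-1}$, which is the (unreduced) suspension over $RP^{n-1}$. I expect the main obstacle to be part (2): checking that $(\K_s)^+\cong Q$, that the closure of $\K_s^{0\le 1}$ in it really carries the one-point-compactification topology, and above all the continuity of $H$ at points over $\infty$, which is precisely where boundedness of compact families of convex bodies in $\K_s$ enters.
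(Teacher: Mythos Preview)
Your proof is correct and essentially matches the paper's argument. Both identify $\K_s^{0\le 1}$ with $\R^n/\{\pm 1\}$ via $v\mapsto[-v,v]$, prove the $Z$-set property by thickening $D\mapsto D+\e\,\mathbb B^n$ (the paper phrases this as ``$f_\e(q)$ is the $\e$-neighborhood of $f(q)$''), pass to the one-point compactification to get $Q$, and read off (4). The only noticeable difference is in (2): the paper argues more tersely that since $\{\infty\}$ is a $Z$-set in $Q$, any map $Q\to Q$ can first be pushed off $\infty$ and then, inside $\K_s$, pushed off the already-established $Z$-set $\K_s^{0\le 1}$; you instead build an explicit homotopy $H(x,t)=g(x,t)+t\,\mathbb B^n$ and verify its continuity at $\infty$ directly via boundedness of compact families. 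Your route is a bit longer but makes the geometry transparent, and your continuity check at $(\infty,0)$ is fine once one notes that $g(x',t')\to\infty$ forces $g(x',t')$ to leave every $\K_s(R\,\mathbb B^n)$, and thickening only enlarges the set.
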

\begin{proof}
Note that $\K_s^{0\le 1}$ consists of line segments (of possibly zero length)
with Steiner point at the origin. Hence $\K_s^{0\le 1}$
is homeomorphic to the quotient space $\R^n/\{\pm 1\}$.
Also $\K_s^{0\le 1}$ is  $Z$-set 
because any map $f\co Q\to\K(\R^n)$ 
can be approximated by $f_\e$, where $f_\e(q)$ is the $\e$-neighborhood of $f(q)$,
which clearly has dimension $n\ge 2$.
As $\K_s^{0\le 1}$ is closed and noncompact in $\K_s$, the 
one-point compactification of $\K_s^{0\le 1}$
embeds into the one-point compactification of $\K_s$, into
which $\K^{2\le n}$ projects homeomorphically to $Q$ because the inclusion
of a space into its one-point compactification 
is an open map. 
Since $\K_s$ is homeomorphic to
the once-punctured copy of $Q$, its one-point compactification is homeomorphic to $Q$. 
Any point of $Q$ is a $Z$-set, so
the one-point compactification takes any $Z$-set in $\K_s$
to a $Z$-set in $Q$ (because a map $Q\to Q$ can be first pushed off the
added point, and then pushed off the $Z$-set inside $\K_s$). 
The one-point compactification of $\K_s^{0\le 1}$
is a $Z$-set in $Q$
homeomorphic to $SRP^{n-1}$, the suspension over the real projective $(n-1)$-space.
Thus $\K^{2\le n}$ is homeomorphic to the complement
in $Q$ of a $Z$-set homeomorphic to $SRP^{n-1}$.
\end{proof}

\begin{rmk}
Any two homeomorphic $Z$-sets in $Q$ are ambiently homeomorphic~\cite[Theorem 11.1]{Cha-book}, and
in particular, the part (4) of Lemma~\ref{lem: Z-set dim at most 1}
uniquely describes $\K_s^{2\le n}$ up to homeomorphism.
Note that the one-point compactification of $\K_s^{0\le 1}$
can be moved by an ambient homeomorphism to a face of $Q$
because any closed subset of a face is clearly a $Z$-set.
\end{rmk}

Chapman showed~\cite[Theorem 21.2]{Cha-book} that homotopy equivalent
$Q$-manifolds become homeomorphic after multiplying by $[0,1)$, and hence
for the products of $Q$-manifolds with $[0,1)$ 
their homeomorphism and homotopy classifications coincide.
A commonly used unpublished result of Wong, see~\cite[p.275]{Cur-peano}, says that a $Q$-manifold $M$
is homeomorphic to $M\times [0,1)$ if and only if for each compact subset $A$ in $M$ there is a proper
homotopy $f_t\co M\to M$ such that $f_1$ is the identity and $f_t(M)\subset M\setminus A$ for some $t>1$.  

\begin{lem} 
\label{lem: prod with [0,1)}
If $M$ is 
$\accentset{\circ}{\K}^{k\le l}_s$ or $\accentset{\circ}{\bm{\K}}^{k\le l}_s$,
then $M$ is homeomorphic to $M\times [0,1)$.
\end{lem}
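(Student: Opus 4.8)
The plan is to verify the Wong criterion quoted just before the statement: a $Q$-manifold $M$ is homeomorphic to $M\times[0,1)$ provided that for every compact $A\subset M$ there is a proper homotopy $f_t\colon M\to M$ with $f_1=\mathrm{id}$ and $f_t(M)\subset M\setminus A$ for some $t>1$. So I would first recall that each of $\accentset{\circ}{\K}^{k\le l}_s$ and $\accentset{\circ}{\bm{\K}}^{k\le l}_s$ is a $Q$-manifold: for the first, $\K^{k\le l}_s$ sits inside the locally compact AR $\K_s\cong Q\times[0,1)$, the principal orbit set is open in it by the Slice Theorem, and being an open subset of a $Q$-manifold (or, where dimension is not locally constant, invoking that $\accentset{\circ}{\K}_s$ is homotopy dense in $\K_s$ together with local compactness and the characterization of $Q$-manifolds) it is itself a $Q$-manifold; for the orbit space, the principal orbit map is a principal $O(n)$-bundle over $\accentset{\circ}{\bm{\K}}^{k\le l}_s$, and a quotient of a $Q$-manifold by a free compact Lie group action is again a $Q$-manifold (this is the $Q$-manifold analogue already used implicitly in the paper's discussion of the orbit spaces), so it suffices to treat the downstairs space and the upstairs one by essentially the same geometric move.

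The geometric engine is a \emph{fattening} homotopy. Given $M=\accentset{\circ}{\K}^{k\le l}_s$ and a compact $A\subset M$, define $g_t\colon \K_s\to\K_s$ by $g_t(D)=\mathbf r\bigl(D+t\,\mathbb B^n\bigr)=D+t\,\mathbb B^n$ (the Steiner point is already at the origin since $s$ is Minkowski linear and $s(\mathbb B^n)=o$), so $g_0=\mathrm{id}$ and $g_t$ is a continuous $O(n)$-equivariant deformation. Reparametrizing $t\in[0,1]\mapsto g_t$ and then extending past $t=1$ by continuing to add balls gives, for each $t\in(1,2]$, a map $g_t$ whose image consists of convex bodies — hence lies in the top stratum $\K^{n}_s$ — of inradius $\ge t-1>0$. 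Since $A$ is compact, $\sf(A)$ is a compact subset of $C(\mathbb S^{n-1})$ and the bodies in $A$ have uniformly bounded diameter; choosing $t$ close to $2$ forces every $g_t(D)$ to have inradius strictly larger than the diameter of any body in $A$, so $g_t(M)\cap A=\varnothing$. One checks $g_t$ is proper because adding a fixed ball is an isometry of the Hausdorff metric up to a bounded shift, so preimages of compacta are compacta; and $g_t$ preserves the principal orbit type because adding a centered ball cannot decrease the symmetry group (if it increased it, the original body would already have had that symmetry), so indeed $g_t(M)\subset M$. For the orbit space one simply pushes this equivariant homotopy down through the bundle projection, using that a principal bundle map over a proper homotopy is proper.

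The one delicate point — and the step I expect to be the main obstacle — is making sure the fattening homotopy stays inside the \emph{prescribed} stratum $\accentset{\circ}{\K}^{k\le l}_s$ rather than jumping to full dimension $n$. If $l<n$ this naive ``add a ball'' move fails outright, since $D+t\,\mathbb B^n$ is always $n$-dimensional. The fix is to fatten only \emph{within} the affine hull: for $D$ of dimension $j$ with $k\le j\le l$, replace $D$ by the Minkowski sum of $D$ with a small $j$-ball inside $\mathrm{aff}(D)-s(D)$, which raises the relative inradius without changing the dimension; one must then arrange this continuously and equivariantly as the affine hull varies, which is exactly the kind of construction the Slice Theorem makes available locally and which can be globalized on the principal-orbit part because there the isotropy — and hence the way $O(n)$ acts on the normal directions to $\mathrm{aff}(D)$ — is locally constant. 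Granting that this ``relative fattening'' is continuous, proper, equivariant, and strictly increases relative inradius, the Wong criterion is verified and the lemma follows; the contractibility/homotopy statements about these $M$ recorded elsewhere are not needed here, only the $Q$-manifold structure and the pushing-off homotopy.
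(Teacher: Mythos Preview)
Your approach via the Wong criterion matches the paper's, but you chose the wrong homotopy. The paper simply uses \emph{scaling}: $f_t(K)=tK$ for $t\ge 1$. This is $O(n)$-equivariant, preserves the Steiner point at the origin (since $s(tK)=ts(K)=o$), preserves dimension, and preserves isotropy type; hence it maps $\accentset{\circ}{\K}^{k\le l}_s$ into itself for every $k\le l$ with no case distinction. Diameter is bounded above on any compact $A\subset M$, and since every $K\in M$ has positive diameter (the only point in $\K_s$ is $\{o\}$, whose isotropy is all of $O(n)$), scaling eventually pushes $M$ off $A$; properness of $(K,t)\mapsto tK$ on $M\times[1,T]$ is clear. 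The orbit-space case follows by descending this equivariant homotopy.

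Your fattening $D\mapsto D+t\,\mathbb B^n$ is a legitimate proper homotopy on $\accentset{\circ}{\K}_s$---your arguments for isotropy preservation via Minkowski cancellation and for properness are correct, and your inradius-versus-diameter escape argument actually works. But you correctly spotted that fattening destroys the stratum when $l<n$, and your proposed fix, ``relative fattening'' within $\mathrm{aff}(D)$, has a real gap: when $k<l$ the dimension of $\mathrm{aff}(D)$ is not locally constant, so there is no continuous choice of a $j$-ball in $\mathrm{aff}(D)$ as $D$ crosses strata (the affine hull jumps discontinuously in dimension). Even on a single stratum $k=l<n$ one would need a continuous equivariant section of the frame bundle of the varying affine hull, which is not obviously available globally. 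There is no reason to fight through any of this when scaling handles every case in one line.
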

\begin{proof}
Suppose $M=\accentset{\circ}{\K}^{k\le l}_s$ and $f_t(K)=tK$
where $K\in M$ and $t\ge 1$. The diameter of $tK$ is $t$ times the diameter of $K$.
Since $o\notin M$, 
any $K\in M$ has positive diameter, which is bounded above on any 
compact subset $A$ of $M$. Thus $f_T(M)$ is disjoint from $A$ for some $T>1$.
The map $(K,t)\to tK$ is proper on $M\times [0,T]$.
Hence by Wong's result $M$ is homeomorphic to $M\times [0,1)$.  
The same  holds for $M=\accentset{\circ}{\bm{\K}}^{k\le l}_s$ because
scaling commutes with the $O(n)$-action, so that $f_t$ descends to a homotopy 
of the orbit spaces that eventually pushes $M$ off any compact subset. 
\end{proof}

Let $\K_s(\mathbb B^n)$ be the hyperspace consisting of sets in $\K_s$
contained in $\mathbb B^n$. It is compact by the Blaschke selection theorem.

\begin{lem}
\label{lem: Ks(Bn)}
$\K_s(\mathbb B^n)$ is homeomorphic to $Q$ and $\K_s^{2\le n}(\mathbb B^n)$ 
is homeomorphic to $Q\times [0,1)$.
\end{lem}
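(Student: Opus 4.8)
The plan is to recognize $\K_s(\mathbb B^n)$ as a Hilbert cube via Toru\'nczyk's characterization: a compact AR with the disjoint cells property (equivalently, SDAP) is homeomorphic to $Q$. First I would note that $\K_s(\mathbb B^n)$ is compact by the Blaschke selection theorem and is a closed convex subset of $\K_s$ (since both $\K_s$ and the collection of sets contained in $\mathbb B^n$ are convex under Minkowski operations, the latter because $D, K\subseteq\mathbb B^n$ implies $aD+bK\subseteq (a+b)\mathbb B^n=\mathbb B^n$ when $a+b=1$), hence it is a compact AR. Alternatively, I would transport the problem through $\sf$: the image $\sf(\K_s(\mathbb B^n))$ is a compact convex subset of $C(\mathbb S^{n-1})$, so I can instead verify infinite-dimensionality and then invoke Keller's theorem (every infinite-dimensional compact convex subset of a Fr\'echet space is homeomorphic to $Q$), which is cleaner and avoids checking SDAP by hand.

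For infinite-dimensionality: the body $\frac12\mathbb B^n$ lies in the interior of $\K_s(\mathbb B^n)$ relative to the affine hull, and by Lemma~\ref{lem: add ball} (or directly) one can perturb its support function by arbitrarily small $C^\infty$ functions on $\mathbb S^{n-1}$ while staying inside $\mathbb B^n$; since the support functions of convex bodies near $\frac12\mathbb B^n$ form a convex neighborhood containing an affinely independent sequence (e.g.\ obtained from a sequence of linearly independent smooth perturbations small enough to keep strict convexity and the $\mathbb B^n$ constraint), $\sf(\K_s(\mathbb B^n))$ contains finite-dimensional cells of every dimension. Hence it is infinite-dimensional, and Keller's theorem gives $\K_s(\mathbb B^n)\cong Q$.

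For the second statement, I would combine this with Lemma~\ref{lem: Z-set dim at most 1}. Exactly as in that lemma, $\K_s^{0\le 1}(\mathbb B^n)=\K_s^{0\le 1}\cap\K_s(\mathbb B^n)$ is the set of segments of length $\le 2$ centered at $o$, which is homeomorphic to the closed cone over $RP^{n-1}$ (radius parametrized by half-length in $[0,1]$), and it is a $Z$-set in $\K_s(\mathbb B^n)$ by the same $\e$-neighborhood pushing argument (note the $\e$-thickening keeps a body inside $\mathbb B^n$ after rescaling by $1/(1+\e)$, or simply observe that bodies of dimension $n$ are dense and the thickening argument applies to the compact source $Q$). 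Then $\K_s^{2\le n}(\mathbb B^n)$ is $Q$ minus a $Z$-set, hence a contractible (indeed it deformation retracts via scaling toward $\frac12\mathbb B^n$, staying in dimension $n$) $Q$-manifold; it is homeomorphic to $Q\times[0,1)$ by Wong's criterion, using the proper homotopy $f_t(K)=\min(t,2/\mathrm{diam}(K))\cdot\, $-type rescaling — more carefully, I would use the homotopy that shrinks $K$ toward $o$, which eventually pushes any compact subset of $\K_s^{2\le n}(\mathbb B^n)$ off itself (a compact subset has diameters bounded below, so sufficiently small rescalings leave it), and is proper. The main obstacle is purely bookkeeping: making sure every rescaling/thickening operation used respects the constraint ``$\subseteq\mathbb B^n$'' and the Steiner-point normalization simultaneously, which forces composing the naive operation with the Steiner retraction $\mathbf r$ and possibly a uniform rescale; none of this is deep but it must be stated with care.
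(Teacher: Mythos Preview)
Your argument for the first claim is essentially the paper's: transport via $\sf$ to a compact convex infinite-dimensional subset of $C(\mathbb S^{n-1})$ and apply Keller's theorem. The paper verifies infinite-dimensionality by rescaling an embedded copy of $Q$ into $\K_s(\mathbb B^n)$, while you perturb $\tfrac12\mathbb B^n$; either works.

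For the second claim your route diverges. The paper observes that the $Z$-set $\K_s^{0\le 1}(\mathbb B^n)$ is contractible, hence has the shape of a point, and then invokes Chapman's complement theorem \cite[Theorem 25.1]{Cha-book} (two $Z$-sets in $Q$ have the same shape iff their complements are homeomorphic) to conclude in one stroke that $\K_s^{2\le n}(\mathbb B^n)\cong Q\setminus\{*\}\cong Q\times[0,1)$. You instead establish contractibility directly and then apply Wong's criterion via the shrinking homotopy $K\mapsto K/s$. This works, but note that Wong alone only yields $M\cong M\times[0,1)$; to reach $Q\times[0,1)$ you still need Chapman's theorem that homotopy equivalent $Q$-manifolds become homeomorphic after multiplying by $[0,1)$, a step you left implicit. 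The paper's route is shorter because the complement theorem packages contractibility, the $Q$-manifold structure, and the identification with $Q\times[0,1)$ into a single citation, whereas your approach unbundles these and also requires the (correct but fiddly) properness verification for the shrinking homotopy.
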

\begin{proof}
Note that $\sf$ maps $\K_s(\mathbb B^n)$ onto a compact convex 
subset of $C(\mathbb S^{n-1})$ which is infinite-dimensional,
which can be seen, e.g., by embedding $Q$ into $\K_s$ and then rescaling to embed it
into $\K_s(\mathbb B^n)$. Any compact convex infinite-dimensional subset of a Banach space
is homeomorphic to $Q$~\cite[p.116, Theorem III.7.1]{BP-book},
so $\K_s(\mathbb B^n)$ is homeomorphic to $Q$.
Now $\K_s^{0\le 1}(\mathbb B^n)$ is homeomorphic to the cone over $RP^{n-1}$,
and in particular it is contractible, and hence has the shape of a point. 
It is also a $Z$-set in $\K_s(\mathbb B^n)$: 
contract the image of $Q\to \K_s(\mathbb B^n)$ by rescaling and then take
$\e$-neighborhood to push it off $\K_s^{0\le 1}(\mathbb B^n)$. Two $Z$-sets in $Q$ the same shape
if and only of their complement are homeomorphic~\cite[Theorem 25.1]{Cha-book},
so $\K_s^{0\le 1}(\mathbb B^n)$ is homeomorphic to a once-punctured Hilbert cube, 
and hence to $Q\times [0,1)$.
\end{proof}

\section{On hyperspaces homeomorphic to $\Si^\o$}
\label{sec: hyp homeo to Si-to-o}

Let us try to isolate the conditions
on a hyperspace that would make it homeomorphic to $\Si^\o$.
Throughout the section we fix $\a\in\II$ and let 
$\mathcal D$ denote an arbitrary 
hyperspace of $\R^n$ satisfying $\B_p\subset\mathcal D\subset\B^{1,\a}$.

\begin{lem}
\label{lem: between set is homotopy dense AR}
If $\B_p\subset X\subset\K_s$, then
$X$ is an AR that is homotopy dense in $\K_s$.
This applies to $X=\mathcal D$.
\end{lem}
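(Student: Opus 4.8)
The plan is to prove both assertions using Schneider's regularization (Lemma~\ref{lem: regularization}) together with the standard criterion for homotopy density in an ANR quoted in Section~\ref{sec: inf dim top}. First I would record that $\K_s$ is an ANR: it is an AR, as was shown in the proof that $\K_s\cong Q\times[0,1)$. So it suffices to show that $\B_p$ is homotopy dense in $\K_s$, for then any $X$ with $\B_p\subset X\subset\K_s$ is automatically homotopy dense in $\K_s$ (the connecting homotopy already lands in $\B_p\subset X$), and a homotopy-dense subset of an ANR is itself an ANR by~\cite[Theorem 1.3.2]{BRZ-book}.

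For the homotopy density of $\B_p$, the map $\r\co\K_s\times\II\to\K_s$ of Lemma~\ref{lem: regularization} is almost exactly what the definition of homotopy dense asks for: $\r(\cdot,0)=\mathrm{id}$ and $\r(\cdot,t)$ has image in $\B_p$ for every $t>0$, so $\r$ is a homotopy $h\co\K_s\times\II\to\K_s$ with $h_0=\mathrm{id}$ and $h(\K_s\times(0,1])\subset\B_p$. That is precisely the definition of "$\B_p$ is homotopy dense in $\K_s$" given in Section~\ref{sec: inf dim top}. (One subtlety: the definition of homotopy dense literally asks $A$ to be a \emph{subspace}, which it is, and there is no requirement that $\r$ fix $\B_p$ pointwise, so nothing further is needed.) Then I would invoke~\cite[Theorem 1.3.2]{BRZ-book}: a homotopy dense subset of an ANR is an ANR. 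Since $\K_s$ is an AR, hence an ANR, every $X$ between $\B_p$ and $\K_s$ is a homotopy dense ANR; and an ANR that is homotopy dense in a contractible space is itself contractible (the ambient contraction can be pushed into $X$ by the homotopy), so $X$ is an AR.

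Finally, for the last sentence, $\mathcal D$ satisfies $\B_p\subset\mathcal D\subset\B^{1,\a}\subset\K_s$, so taking $X=\mathcal D$ in the above gives that $\mathcal D$ is an AR that is homotopy dense in $\K_s$. I do not anticipate a serious obstacle here: the content is entirely in Lemma~\ref{lem: regularization}, and the rest is a bookkeeping application of the dictionary in Section~\ref{sec: inf dim top}. The one point requiring a line of care is the passage from "homotopy dense ANR in an AR" to "AR": one should note that $A\hookrightarrow X$ is a homotopy equivalence whenever $A$ is homotopy dense in $X$ (a deformation retraction up to homotopy is provided by $h$), so $A$ inherits the weak homotopy type, hence the homotopy type, of the AR $X$, i.e. $A$ is contractible; combined with "$A$ is an ANR" this yields "$A$ is an AR".
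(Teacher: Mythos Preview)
Your proof is correct but takes a different route from the paper's. You observe that Schneider's regularization $\rho$ literally \emph{is} the homotopy required by the definition of homotopy density, and then deduce the AR property from that. The paper instead argues indirectly: it only uses regularization to conclude that $\B_p$ is \emph{dense} in $\K_s$, then invokes the embedding $\sf$ into $C(\mathbb S^{n-1})$ to see that $\sf(\B_p)$ and $\sf(\K_s)$ are convex subsets of a linear metric space, and finally cites the general fact that a dense convex subset of a convex set is homotopy dense. Your approach is more direct and avoids the detour through $\sf$ and convexity; the paper's approach, on the other hand, foregrounds the convex-in-a-linear-space structure of $\sf(\B_p)$, which is reused later (e.g., in the strong $\M_2$-universality argument via~\cite[Proposition 5.3.5]{BRZ-book}). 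For the AR conclusion, the paper simply cites~\cite[Proposition 1.2.1]{BRZ-book} (``any homotopy dense subset of an AR is an AR''), which collapses your two-step ``ANR $+$ contractible $\Rightarrow$ AR'' argument into a single reference; your version is correct but slightly longer than necessary.
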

\begin{proof}
By Schneider's regularization $\B_p$ is dense in $\K_s$. The map $\sf$
homeomorphically takes $\K_s$ and $\B_p$ to convex subsets of
$C(\mathbb S^{n-1})$.
By~\cite[Exercises 12 and 13 in Section 1.2]{BRZ-book} any dense convex subset of a set
in a linear metric space is homotopy dense. 
Thus $\B_p$ is homotopy dense in  $\K_s$, and hence so is $X$.
Any homotopy dense subset of an AR is an AR~\cite[Proposition 1.2.1]{BRZ-book}.
This applies to $X$ because $\K_s$ is an AR; in fact, 
$\K_s$ is homeomorphic to a once-punctured Hilbert cube~\cite{NQS}.
\end{proof}

\begin{lem}
\label{lem: between set has SDAP}
$\mathcal D$ has SDAP.
\end{lem}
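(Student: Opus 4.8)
The plan is to verify SDAP for $\mathcal D$ directly, exploiting the fact that $\mathcal D$ sits between $\B_p$ and $\B^{1,\a}$, both of which are convex (up to the identification via $\sf$) and that $\mathcal D$ is homotopy dense in $\K_s$ by Lemma \ref{lem: between set is homotopy dense AR}. Recall that SDAP asks: given an open cover $\mathcal U$ of $\mathcal D$ and a map $g\co Q\times\o\to\mathcal D$, we must produce a $\mathcal U$-close map $g'\co Q\times\o\to\mathcal D$ whose image family $\{g'(Q\times\{m\})\}_{m\in\o}$ is locally finite in $\mathcal D$. First I would recall from the infinite-dimensional topology literature (e.g.\ \cite[Section 1.3]{BRZ-book}) the standard criterion that a convex set in a linear metric space, or more generally a homotopy dense subset of such, automatically enjoys SDAP provided it is not locally compact; alternatively, one reduces SDAP to LCAP together with the discrete approximation property. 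Since $\sf$ carries $\K_s$ onto a closed convex subset $\sf(\K_s)$ of $C(\mathbb S^{n-1})$, and $\sf(\B_p)\subset\sf(\mathcal D)\subset\sf(\B^{1,\a})$ are convex subsets sitting densely inside it, the ambient convex structure is available.

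The key steps, in order. (1) Transport everything through the isometry $\sf$ so that we work inside the convex set $C=\sf(\K_s)\subset C(\mathbb S^{n-1})$, with $\mathcal D$ replaced by the convex dense subset $\sf(\mathcal D)$. (2) Given $g\co Q\times\o\to\sf(\mathcal D)$ and a cover $\mathcal U$, choose a sequence of ``fattening'' perturbations: add to $g(q,m)$ a support function of a small ball of radius $\e_m\,\mathbb B^n$, i.e.\ replace $g(q,m)$ by $g(q,m)+\e_m\|\cdot\|$, where $\e_m\to 0$ fast enough that the perturbation is $\mathcal U$-small; by Lemma \ref{lem: add ball} this lands in $\sf(\B_p)\subset\sf(\mathcal D)$, and more importantly it spreads the $m$-th ``slab'' onto the level set where the inradius is exactly $\e_m$. (3) Because the values $\e_m$ are pairwise distinct and tend to $0$, while the inradius function $D\mapsto\mathrm{inrad}(D)$ is continuous on $\K_s$ and strictly positive on each $g'(Q\times\{m\})$ with distinct values across different $m$, a neighborhood of any point of $\mathcal D$ meets only finitely many of the sets $g'(Q\times\{m\})$ — one uses that $g'(Q\times\{m\})$ is contained in the closed region $\{\mathrm{inrad}\ge\e_m\}$ minus its relevant interior, or more cleanly that these images are separated by the locally finite family of level sets of the inradius. (4) Conclude that $\{g'(Q\times\{m\})\}$ is locally finite, so $g'$ witnesses SDAP.

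The main obstacle I anticipate is step (3): making the local finiteness argument honest. Adding $\e_m\,\mathbb B^n$ changes the inradius but not in a way that cleanly confines the $m$-th image to a single level set — a body $D$ with large inradius stays with large inradius after the perturbation. The fix is to not fatten but rather to first \emph{rescale-and-shift}: compose $g$ with a homeomorphism of $\K_s$ (or $\sf(\K_s)$) onto a bounded convex region, then apply the standard ``push into a locally finite family of parallel affine slices'' trick available in any non-locally-compact convex set, which is precisely the content of the SDAP for $\Si$-like convex sets recorded in \cite[Section 1.3]{BRZ-book} and \cite[Section V.2]{BP-book}; the only thing to check is that the pushing maps can be taken to have image in $\sf(\mathcal D)$, which follows because $\sf(\mathcal D)$ is convex and homotopy dense in the convex set $\sf(\K_s)$, hence itself satisfies the same SDAP-type criterion. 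Thus the lemma reduces to citing the convex-set version of SDAP for $\sf(\mathcal D)$, with Lemmas \ref{lem: between set is homotopy dense AR} and \ref{lem: regularization} supplying the two hypotheses (homotopy density and non-local-compactness) that make the citation applicable.
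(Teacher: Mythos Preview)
Your proposal has a genuine gap. The hyperspace $\mathcal D$ is only assumed to satisfy $\B_p\subset\mathcal D\subset\B^{1,\a}$; nowhere is it assumed that $\sf(\mathcal D)$ is convex. You repeatedly use convexity of $\sf(\mathcal D)$---in step (1), in the phrase ``$\sf(\B_p)\subset\sf(\mathcal D)\subset\sf(\B^{1,\a})$ are convex subsets'', and in your final reduction ``because $\sf(\mathcal D)$ is convex and homotopy dense''---so the argument does not apply to an arbitrary such $\mathcal D$.

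Even restricting to the convex cases $\mathcal D=\B_p$ or $\mathcal D=\B^{1,\a}$, the ``convex-set version of SDAP'' you allude to does not apply. The standard criteria (e.g.\ \cite[Propositions 5.2.1, 5.2.6, 5.2.7]{BRZ-book}) require, roughly, that the closure of the convex set in its affine hull be non-locally-compact, or that the set not be contained in the affine hull of its closure. Here the closure of $\sf(\B^{1,\a})$ in $C(\mathbb S^{n-1})$ equals $\sf(\K_s)$, which \emph{is} locally compact, and $\sf(\K_s)$ lies in the affine hull of $\sf(\B^{1,\a})$; so those criteria fail. Your own step (3) already noticed the underlying difficulty: the compact images $g(Q\times\{m\})$ can accumulate in $\K_s$, and ``non-local-compactness'' of $\mathcal D$ alone does not prevent that accumulation from happening inside $\mathcal D$.

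The paper's argument supplies exactly the missing geometric input: it shows $\B^{1,\a}$ is not only homotopy dense but also \emph{homotopy negligible} in $\K_s$, by pushing any map $Q\to\K_s$ off $\B^{1,\a}$ via $(\e,u)$-truncating (which produces bodies that are not $C^1$). Then a general fact (\cite[Exercise 12h in Section 1.3]{BRZ-book}) says a homotopy dense, homotopy negligible subset of an ANR with LCAP has SDAP; since $\K_s$ is locally compact it has LCAP, so $\B^{1,\a}$ has SDAP. Finally, SDAP passes to homotopy dense subsets of ANRs (\cite[Exercise 4 in Section 1.3]{BRZ-book}), giving SDAP for the arbitrary $\mathcal D$ without any convexity assumption.
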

\begin{proof}
Lemma~\ref{lem: between set is homotopy dense AR} shows
homotopy density of $\B^{1,\a}$  in $\K_s$.
Also $\B^{1,\a}$ is homotopy negligible in $\K_s$
because if we fix $u\in\mathbb S^{n-1}$ and a map $Q\to\K_s$, 
then there is $\e>0$ such that for any $D\in f(Q)$
the result of $(\e, u)$-truncating of $D$
is not $C^1$.
Since $\K_s$ is locally compact, it has LCAP, 
and~\cite[Exercise 12h in Section 1.3]{BRZ-book} implies 
that any homotopy dense and homotopy negligible subset of an ANR with LCAP
has SDAP. Thus $\B^{1,\a}$ has SDAP.
By~\cite[Exercise 4 in Section 1.3]{BRZ-book} every homotopy dense
subset of an ANR with SDAP has SDAP, and this applies to $\mathcal D$.
\end{proof}

\begin{lem} 
\label{lem: sZ}
If $\a>0$, then $\mathcal D$ is $\s Z$, and it lies in a $\s Z$-subset of $\K_s$. 
\end{lem}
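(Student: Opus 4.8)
The plan is to exhibit $\mathcal{D}$ as contained in a countable union of $Z$-sets in $\K_s$, and since $Z$-sets in $\K_s$ intersect $\mathcal D$ in $Z$-sets of $\mathcal D$ (as $\mathcal D$ is homotopy dense in $\K_s$ by Lemma~\ref{lem: between set is homotopy dense AR}, so a map $Q\to\mathcal D$ can first be pushed into $\K_s\setminus Z$ and then back into $\mathcal D$), this will prove both assertions at once. The key point is that a $C^{1,\a}$ convex body $D$ with $\a>0$ satisfies a quantitative gradient-Hölder bound on $\sf(D)=h_D|_{\mathbb S^{n-1}}$, and such bounds are closed conditions that single out a $\s$-compact — in particular $\sigma Z$ — family.

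Concretely, for positive integers $m$ let $\mathcal{D}_m$ be the set of $D\in\K_s$ such that $h_D|_{\mathbb S^{n-1}}\in C^{1,\a}$ with $\|h_D|_{\mathbb S^{n-1}}\|_{C^{1,\a}}\le m$ and $\mathrm{diam}(D)\le m$. Then $\mathcal D\subset\B^{1,\a}=\bigcup_m\mathcal D_m$. First I would check that each $\sf(\mathcal D_m)$ is a closed, bounded, equicontinuous (indeed equi-$C^{1,\a}$) subset of $C(\mathbb S^{n-1})$; by Arzelà–Ascoli applied to the functions and their gradients, $\sf(\mathcal D_m)$ is compact, hence $\mathcal D_m$ is compact in $\K_s$. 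Next I would verify that each $\mathcal D_m$ is a $Z$-set in $\K_s$: given a map $f\colon Q\to\K_s$, approximate it by $q\mapsto f(q)+\e\,\mathbb B^n$ composed with the Steiner retraction — no, more simply, observe that $(\e,u)$-truncating as in Lemma~\ref{lem: between set has SDAP} produces, for fixed $u\in\mathbb S^{n-1}$ and small $\e$, convex bodies that fail to be $C^1$ (so lie outside $\B^{1,\a}\supset\mathcal D_m$), and the truncation map $\K_s\times\II\to\K_s$ is continuous and close to the identity for small $\e$. Thus $\K_s\setminus\B^{1,\a}$ is homotopy dense, so each closed subset $\mathcal D_m$ of $\B^{1,\a}$ is a $Z$-set in $\K_s$ by~\cite[Theorem 1.4.4]{BRZ-book}.

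Combining, $\mathcal D\subset\bigcup_m\mathcal D_m$ is a $\s Z$-subset of $\K_s$, and $\mathcal D=\bigcup_m(\mathcal D\cap\mathcal D_m)$ is a countable union of $Z$-sets in $\mathcal D$, i.e.\ $\mathcal D$ is $\s Z$. The main obstacle is the first step: confirming that the $C^{1,\a}$ norm of $h_D|_{\mathbb S^{n-1}}$, together with a diameter bound, really does give $C^0$-precompactness of $\sf(\mathcal D_m)$ and that the defining inequalities are preserved under $C^0$ (equivalently Hausdorff) limits — i.e.\ that $\mathcal D_m$ is genuinely closed in $\K_s$. This is where $\a>0$ is essential: for $\a=0$ a uniform $C^1$ bound gives only equicontinuity of the gradients in a weak sense and the limiting argument for closedness breaks down, which is exactly why the lemma is stated only for $\a>0$.
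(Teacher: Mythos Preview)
There is a genuine gap: the identity $\B^{1,\a}=\bigcup_m\mathcal D_m$ is false. The condition $h_D|_{\mathbb S^{n-1}}\in C^{1,\a}$ is \emph{not} equivalent to $\partial D$ being a $C^{1,\a}$ submanifold; these are dual, not identical, regularity statements. Recall from Section~\ref{sec: conv geom} that $h_D$ is $C^1$ exactly when $D$ is strictly convex, whereas $\partial D$ is $C^1$ exactly when every boundary point has a unique outward normal. A stadium-shaped body (rectangle with semicircular caps) lies in $\B^{1,1}\subset\B^{1,\a}$ but is not strictly convex, so its support function is not even $C^1$ and it belongs to no $\mathcal D_m$. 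Since the lemma must cover the case $\mathcal D=\B^{1,\a}$, your inclusion $\mathcal D\subset\bigcup_m\mathcal D_m$ fails. (The containment $\mathcal D_m\subset\B^{1,\a}$, invoked in your $Z$-set step, is also false: a lens of two circular arcs meeting at corners is strictly convex with smooth $h_D$ but non-$C^1$ boundary. That step can still be rescued, since truncation creates a flat face, hence a non-strictly-convex body, hence one outside every $\mathcal D_m$---but the covering problem cannot.)

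The repair is to bound the $C^{1,\a}$ norm of something that actually detects boundary regularity. The paper does this by orthogonally projecting $K$ to a coordinate hyperplane, writing a canonical patch of $\partial K$ as the graph of a convex function $f_K$ over a disk in that hyperplane, and setting $\hat Z_m=\{K:\|f_K\|_{C^{1,r}}\le m\ \text{for all}\ r<\a\}$; this genuinely exhausts $\B^{1,\a}$. With that correction your Arzel\`a--Ascoli and closedness arguments go through, and your $Z$-set route---truncate to leave $\B^{1,\a}$, conclude each $\hat Z_m$ is a $Z$-set in $\K_s$, then use homotopy density of $\mathcal D$ to pass to $Z$-sets in $\mathcal D$---is arguably cleaner than the paper's, which instead truncates and then re-regularizes inside $\B_p$ to produce bodies in $\mathcal D$ with large $C^1$ norm. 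So the architecture of your argument is sound; it is just attached to the wrong function.
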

\begin{proof}
For a convex body $K$ in $\R^n$ consider its orthogonal projection $\bar K$ 
to the hyperplane $\{x\in\R^n\,:\, x_n=0\}$. Let $s(\bar K)\in\mathrm{Int}(\bar K)$ 
be the Steiner point of $\bar K$, consider the largest ball about $s(\bar K)$ that is contained
in $\bar K$, and let $B_K$ be the ball half that radius about $s(\bar K)$.
Consider the portion of $\d K$ that is the graph of a convex
function on $B_K$ and precompose the function with the map $\mathbb B^{n-1}\to B_K$
that is the composition of a dilation followed by the translation by $s(\bar K)$.
The result is a convex function by $f_K\co \mathbb B^n\to\R$.
It is easy to see that the map $\K^n\to C(\mathbb B^{n-1})$
given by $K\to f_K$ is continuous.

For $m\in\o$ let $\L_m$ be the set of functions $f\in C(\mathbb B^n)$ such that  for every $r\in (0,\a)$
the $C^{1, r}$ norm of $f$ is at most $m$. Equip $\L_m$ with the $C^0$ topology.
A version of the Arzel{\`a}-Ascoli theorem, see~\cite[Lemma 6.36]{GilTru}, 
implies that $\L_m$ is compact.

Let $\hat Z_m=\{K\in\K_s\,:\, f_K\in\L_m\}$ and $Z_m=\hat Z_m\cap\mathcal D$.
Thus $\hat Z_m$, $Z_m$ are closed in $\K_s$, $\mathcal D$, respectively.

The equality $\mathcal D=\bigcup_m Z_m$ follows from the facts that
for each $\r\in (0,r)$ any $C^{1,\a}$ function on $D$ has finite $C^{1,\r}$ norm, and 
the Lipschitz constant of the identity map of $C^{1,\a}(D)$,
where the domain and the co-domain are respectively given the $C^{1,r}$ and $C^{1,\r}$ norms,
is bounded above independently of $\r$, see~\cite[Lemma 6.35]{GilTru}. 

To show that $Z_m$ is a $Z$-set
we start from a continuous map $f\co Q\to \mathcal D$ and 
try to push it off $Z_m$ inside $\mathcal D$.
Let $n_{_K}$ be the outward normal vector to the graph of $f_K$ 
at the point that projects to $s(\bar K)$. A basic property of $C^1$ convex 
bodies is that $n_{_K}$ varies continuously with $K$.
Apply $(\e, n_{_K})$-truncating to $f$, and then Schneider's $\tau$-regularization.

Since $Q$ is compact for all sufficiently small $\e$ the result of $(\e, n_{_K})$-truncating
of each body in $f(Q)$ is not $C^1$. For small $\tau$ the result of the above procedure 
will have very large $C^1$ norm, and hence it will not intersect $Z_m$.
(If it did, then for some $r>0$ the $C^{1,r}$ norm would be bounded uniformly in $\de$,
and the Arzel{\`a}-Ascoli theorem would give a subsequence converging in the
$C^1$ norm, but the limit is not $C^1$).

To show that $\hat Z_m$ is a $Z$-set in $\K_s$ start 
from a continuous map $f\co Q\to \mathcal \K_s$, push it into
$\mathcal D$ by Schneider's regularization, and then push it off $Z_m$
inside $\mathcal D$ as above. Since $Z_m=\hat Z_m\cap\mathcal D$,
the resulting map will miss $\hat Z_m$.
\end{proof}

Lemma~\ref{lem: sZ} seems to be false for $\a=0$ but we have no use for 
this assertion hence we will not attempt to justify it.

\begin{rmk}
\label{rmk: invariant Z-set}
If $\mathcal D$ in Lemma~\ref{lem: sZ} is $O(n)$-invariant,
then $I_m=\bigcap_{g\in O(n)}g(Z_m)$ is a closed subset of $Z_m$,
and hence a $Z$-set in $\mathcal D$. The facts that $O(n)$ is compact
and the $C^{1,\a}$ norm of $f_K$ varies continuously under slight rotations
of the  graph of $f_K$ easily imply that $\mathcal D=\bigcup_{m\in\o} I_m$. 
Similarly, $\hat I_m=\bigcap_{g\in O(n)}g(\hat Z_m)$ is an $O(n)$-invariant
$Z$-set in $\K_s$, and $\{I_m\}_{m\in\o}$ covers 
$\mathcal D$ because $\hat I_m\supset I_m$.
\end{rmk}

\begin{lem}
\label{lem: B_p is M_2}
$\B_p$ belongs to $\mathcal M_2$.
\end{lem}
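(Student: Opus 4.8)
The plan is to show $\B_p\in\M_2$ by exhibiting $\B_p$ as an $F_{\s\de}$-subset of a Polish space into which it embeds, using the characterization recalled in Section~\ref{sec: inf dim top} (namely $X\in\M_2$ iff the image of any embedding of $X$ into a Polish space is $F_{\s\de}$). Since $\sf$ is an isometric embedding of $\K_s$ onto a closed convex subset of the Polish space $C(\mathbb S^{n-1})$, it suffices to check that $\sf(\B_p)$ is $F_{\s\de}$ in $C(\mathbb S^{n-1})$, or equivalently $F_{\s\de}$ in the closed (hence Polish) set $\sf(\K_s)$. So the real task is to describe membership in $\B_p$ — i.e., ``$h_D\vert_{\mathbb S^{n-1}}$ is $C^\infty$ and $\nabla h_D\vert_{\mathbb S^{n-1}}$ has no critical points'' (Lemma~\ref{lem: B_p is M_2}'s predecessor, the equivalence lemma, with $k$ arbitrary) — by a countable intersection of countable unions of closed conditions.

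First I would handle the smoothness condition. For each multi-index order $j$ and each bound $m\in\o$, let $S_{j,m}\subset\sf(\K_s)$ be the set of support functions $h$ whose (spherical or, via homogeneity, ambient-restricted-to-an-annulus) derivatives up to order $j$ exist and have $C^0$-norm $\le m$; by an Arzelà–Ascoli argument exactly as in the proof of Lemma~\ref{lem: sZ}, each $S_{j,m}$ is compact, hence closed. Then $\{h : h\vert_{\mathbb S^{n-1}}\in C^j\} = \bigcup_m S_{j,m}$ is $F_\s$ for each fixed $j$ — careful here: being $C^j$ is not literally the same as having bounded $C^j$ norm, but one upgrades this exactly as in Lemma~\ref{lem: sZ}, passing to $C^{j,r}$ bounds for rational $r\in(0,1)$ and using the uniform-Lipschitz comparison of Hölder norms \cite[Lemmas 6.35, 6.36]{GilTru}; this makes each ``$C^{j}$'' stratum a countable union of compacta. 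Intersecting over all $j\in\o$ gives that $\{h : h\vert_{\mathbb S^{n-1}}\in C^\infty\}$ is $F_{\s\de}$ in $\sf(\K_s)$.

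Next I would impose the curvature (no-critical-point) condition as an additional $F_{\s\de}$ restriction. On the set $A$ of support functions that are already $C^\infty$, the assignment $h\mapsto \mathrm{Hess}\,h\vert_{\mathbb S^{n-1}}$ (equivalently, the differential of $\nu_D^{-1}=\nabla h_D\vert_{\mathbb S^{n-1}}$) is continuous into continuous sections of an appropriate bundle, and positive-definiteness is an open condition; more usefully, $\min_{\mathbb S^{n-1}}(\text{smallest eigenvalue of the relevant Hessian})$ is a continuous function of $h$ on $A$, positive exactly on $\sf(\B_p)$. Hence $\sf(\B_p)=\bigcup_{k\in\o}\{h\in A : \text{that minimum}\ge 1/k\}$, and each of these is closed in $A$, so $\sf(\B_p)$ is $F_\s$ in $A$. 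Combining: $A$ is $F_{\s\de}$ in the Polish set $\sf(\K_s)$, and $\sf(\B_p)$ is $F_\s$ in $A$; an $F_\s$ subset of an $F_{\s\de}$ set in a Polish space is again $F_{\s\de}$, so $\sf(\B_p)$ is $F_{\s\de}$ in a Polish space, whence $\B_p\in\M_2$.

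The main obstacle is the bookkeeping in the smoothness step: ``$h\vert_{\mathbb S^{n-1}}\in C^\infty$'' must be genuinely written as a Borel combination (indeed $F_{\s\de}$) of closed conditions, and the naive stratification by $C^j$-norm bounds does not literally capture $C^j$-ness — a function can be $C^{j}$ without its $j$-th derivatives being globally bounded only if the domain is noncompact, but here $\mathbb S^{n-1}$ is compact, so on a compact domain a $C^j$ function does have bounded $C^j$ norm, and the only subtlety is that ``$j$-th derivative exists and is continuous'' is captured as a countable union over modulus-of-continuity/Hölder bounds; this is precisely the Arzelà–Ascoli plus Hölder-interpolation device already used verbatim in Lemma~\ref{lem: sZ} and \cite[Lemmas 6.35, 6.36]{GilTru}, so it transfers with only notational changes. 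Everything else (continuity of $h\mapsto f_K$-type restrictions, compactness of Hölder balls, continuity of the eigenvalue functional on the smooth stratum) is routine.
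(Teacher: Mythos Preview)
Your direct $F_{\s\de}$ verification is a different route from the paper's, which instead equips $\B_p$ with the $C^\infty$ topology, observes that this space is open in a Polish space of hypersurfaces (hence Polish), so its support-function image is $G_\de$ in $\Gamma^\infty(\mathbb S^{n-1})$, and then cites \cite[Lemma~5.2]{BanBel} to the effect that the identity $\Gamma^\infty(\mathbb S^{n-1})\to\Gamma^0(\mathbb S^{n-1})$ carries $G_\de$ sets to spaces in $\M_2$. Your approach has the merit of unpacking that black box; the smoothness step is essentially what that lemma encodes.

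There is, however, a genuine gap in your curvature step. You assert that on the set $A$ of smooth support functions, carrying the inherited $C^0$ topology, the assignment $h\mapsto\mathrm{Hess}\,h$ and hence the minimum-eigenvalue functional $m(h)$ are continuous. This is false: $C^0$-closeness gives no control whatsoever on second derivatives. For instance in $\R^2$ the functions $h_n(\theta)=1+\tfrac{1}{2(n^2-1)}\cos n\theta$ are smooth support functions with $h_n\to 1$ in $C^0$, yet $h_n+h_n''$ oscillates between $\tfrac12$ and $\tfrac32$ for every $n$, so $m(h_n)=\tfrac12$ while $m(1)=1$. What \emph{is} true, and suffices, is that $m$ is upper-semicontinuous: for smooth $h$ the bound $m(h)\ge c$ is equivalent to convexity of $h-c\,h_{\mathbb B^n}$ on $\R^n$ (both functions are $1$-homogeneous, so their Hessians vanish radially, and $\mathrm{Hess}\,h_{\mathbb B^n}$ restricts to the identity on $T\mathbb S^{n-1}$), and convexity is preserved under pointwise limits. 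Thus each set $\{h\in\sf(\K_s):h-\tfrac{1}{k}|\cdot|\ \text{convex}\}$ is $C^0$-closed, their union over $k$ is an honest $F_\s$ meeting $A$ in $\sf(\B_p)$, and your final step goes through. A secondary point: $\{h\in C^j\}$ is not literally $F_\s$ in $C^0$ (the $C^0$-closure of a $C^j$-norm ball is a $C^{j-1,1}$-ball), but this does not matter since for fixed $\a\in(0,1)$ one has $C^\infty=\bigcap_j C^{j,\a}$ and each $C^{j,\a}$-ball is genuinely $C^0$-compact; your mention of $C^{j,r}$ bounds shows you essentially had this.
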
 
\begin{proof}
Let $\B_p^\infty$ denote the set $\B_p$ with the $C^\infty$ topology.
In this topology the Gaussian curvature of any convex body in $\B_p^\infty$ varies 
continuously. 
Thus $\B_p^\infty$ is precisely the subset of hypersurfaces
of positive Gaussian curvature in the space of all compact $C^\infty$ hypersurfaces
in $\R^n$ equipped with the $C^\infty$ topology. The latter space 
is Polish, see~\cite{GV}. Any open subset
of a Polish space is Polish, hence 
$\B_p^\infty$ is Polish.

For $\g\in\{0,\infty\}$ let $\Gamma^\g(\mathbb S^{n-1})$ denote
the set $C^\infty(\mathbb S^{n-1})$ equipped with the $C^\g$
topology. Let $\sf^\infty\co\B_p^\infty\to \G^\infty(\mathbb S^{n-1})$ be the map 
that associates to a convex body its support function, i.e., $\sf^\infty=\sf$ as maps of sets.
Similarly to $\sf$, the map $\sf^\infty$ is a topological embedding because the support function
for sets in $\B_p$ equals the distance to $o$ from the support hyperplane, 
and both the tangent plane and the distance to $o$ vary in the $C^\infty$ topology
as $o$ lies in the interior of each set in $\B_p$.
Since $\B_p^\infty$ is Polish, its
homeomorphic $\sf^\infty$-image is $G_\delta$. By~\cite[Lemma 5.2]{BanBel}
the identity map $\Gamma^\infty(\mathbb S^{n-1})\to \Gamma^0(\mathbb S^{n-1})$
takes any $G_\delta$ subset to a space in $\mathcal M_2$. Thus
$\B_p$ is in $\M_2$.
\end{proof}

\begin{lem}
\label{lem: M_2}
$\mathcal D$ is in $\M_2$
if $\mathcal D\setminus\B_p$ lies in a subset of $\mathcal D$
that belongs to $\M_2$.
\end{lem}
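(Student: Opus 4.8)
The plan is to deduce $\mathcal D\in\M_2$ from $\B_p\in\M_2$ (Lemma~\ref{lem: B_p is M_2}) together with the hypothesis, by exhibiting $\mathcal D$ as an $F_{\sigma\delta}$-set in a compactum. Denote by $S$ a subset of $\mathcal D$ with $\mathcal D\setminus\B_p\subset S$ and $S\in\M_2$. First I would record the set-theoretic identity $\mathcal D=\B_p\cup S$, which holds because $\B_p\subset\mathcal D$, $S\subset\mathcal D$, and $\mathcal D\setminus\B_p\subset S$, so that $\mathcal D=\B_p\cup(\mathcal D\setminus\B_p)\subset\B_p\cup S\subset\mathcal D$. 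Thus it suffices to show that the union of two $\M_2$-subspaces of $\K_s$ belongs to $\M_2$.

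Next I would move the whole computation into the Hilbert cube $Q$. Since $\K_s$ is homeomorphic to $Q\times[0,1)$ it is $\sigma$-compact, hence any embedding of $\K_s$ into $Q$ has $F_\sigma$ image; in particular $\K_s$ is an $F_\sigma$, a fortiori an $F_{\sigma\delta}$, subset of $Q$, and it is Polish. Using the characterization of $\M_2$ recalled above (a space lies in $\M_2$ iff the image of every embedding of it into a Polish space is $F_{\sigma\delta}$), the subspaces $\B_p\subset\K_s$ and $S\subset\K_s$ are $F_{\sigma\delta}$ in $\K_s$. Now I would invoke that the $F_{\sigma\delta}$-sets of a space are closed under finite unions: if $A=\bigcap_nA_n$ and $B=\bigcap_mB_m$ with each $A_n$ and $B_m$ an $F_\sigma$-set, then $A\cup B=\bigcap_{n,m}(A_n\cup B_m)$ is again a countable intersection of $F_\sigma$-sets. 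Hence $\mathcal D=\B_p\cup S$ is $F_{\sigma\delta}$ in $\K_s$. Finally, unwinding the subspace topology one writes $\mathcal D=\K_s\cap E$ for some $F_{\sigma\delta}$-set $E\subset Q$, and since $F_{\sigma\delta}$-sets are also closed under finite intersections it follows that $\mathcal D$ is $F_{\sigma\delta}$ in the compactum $Q$; by the very definition of $\M_2$ this gives $\mathcal D\in\M_2$.

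The argument is essentially bookkeeping with the absolute Borel class $\M_2$, and I do not anticipate a genuine obstacle. The one point that merits a little care is keeping straight the transfer of the "$F_{\sigma\delta}$" property through the three ambient spaces $\K_s$, $Q$, and their subspaces, which is exactly why I would carry everything out inside the fixed compactum $Q$ rather than appeal to abstract closure properties of the class $\M_2$.
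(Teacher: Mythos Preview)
Your proof is correct and follows essentially the same approach as the paper: write $\mathcal D=\B_p\cup S$ as a union of two $\M_2$-sets, use the absolute nature of $\M_2$ to realize both as $F_{\sigma\delta}$ in an ambient Polish space, and take the union. The only cosmetic difference is that the paper stops at ``$\mathcal D$ is $F_{\sigma\delta}$ in the complete space $\K_s$'' and cites~\cite[Theorem 1.1, p.266]{BP-book} to conclude, whereas you push one step further into $Q$ to invoke the definition of $\M_2$ directly; both routes are fine.
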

\begin{proof}
$\B_p$ is in $\M_2$ by Lemma~\ref{lem: B_p is M_2}.
Hence $\mathcal D$ is the union of two subsets that belong to $\M_2$,
the class of absolute $F_{\s\de}$ sets, i.e., their homeomorphic images
in any metric space are $F_{\s\de}$, and in particular, this is true in $\K_s$. 
The union of two $F_{\s\de}$ subset is $F_{\s\de}$, so $D$ is a $F_{\s\de}$
in $\K_s$, which is complete and therefore $\mathcal D$ 
is in $\M_2$~\cite[Theorem 1.1, p.266]{BP-book}.
\end{proof}

Lemma~\ref{lem: B_p is in M2-universal} below depends on
the following theorem proved in~\cite[Theorem 5.1 and Corollary 4.9]{BanBel}.

\begin{thm}
\label{thm: criterion M_2-univer}
Let $N$ be a smooth manifold, possibly with boundary, and 
let $D\subset \mathrm{Int}\, N$ be a smoothly embedded top-dimensional closed disk
that is mapped via a coordinate chart to a Euclidean unit disk. Let $l\ge 0$ be an integer and 
let $\mathfrak D\co C^l(N)\to C(N)$ be a continuous linear map.
Given $\eta\in\R$ suppose there exists $h_\bu\in C^\infty(N)$ with 
$\mathfrak D h_\bu\vert_{_D}>\eta$.
Let $C^l_\bu$ denote the subspace of $C^l(N)$ of functions $u$ 
such that $\mathfrak D u\vert_D\ge \eta$ and $u\vert_{N\setminus \mathrm{Int}\,D}=h_\bu$. 
Let $C^n_\bu=C^l_\bu\cap C^n(N)$ and $C^\infty_\bu=C^l_\bu\cap C^\infty(N)$. Let $k\ge l$.
If $f\co C^k_\bu\to X$ is a continuous injective map to a Hausdorff topological space $X$,
then the subspace $f(C^\infty_\bu)$ of $X$ is $\M_2$-universal.
\end{thm}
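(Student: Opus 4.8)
The plan is to find inside $f(C^\infty_\bu)$ a closed subset homeomorphic to $\Si^\o$. Since $\Si^\o$ is $\M_2$-absorbing it is $\M_2$-universal, and a closed subset of a closed subset is closed, so this suffices. I would obtain such a subset as $f\bigl(P\cap C^\infty(N)\bigr)$ for a suitable compact set $P\subset C^k_\bu$. Indeed, since $f$ is continuous and injective on $C^k_\bu$ and $X$ is Hausdorff, $f$ restricts to a homeomorphism of $P$ onto the compact, hence closed, set $f(P)\subset X$; using injectivity and $C^\infty_\bu\subset C^k_\bu$ one gets $f(P)\cap f(C^\infty_\bu)=f\bigl(P\cap C^\infty_\bu\bigr)=f\bigl(P\cap C^\infty(N)\bigr)$, where the last equality holds because $P\subset C^l_\bu$. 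Hence $f\bigl(P\cap C^\infty(N)\bigr)$ is closed in $f(C^\infty_\bu)$ and homeomorphic to $P\cap C^\infty(N)$, and the whole problem reduces to constructing a compact $P\subset C^k_\bu$ with $P\cap C^\infty(N)$ homeomorphic to $\Si^\o$.

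Such a $P$ should be built from bump functions supported in $D$. Using the given chart identifying $D$ with the Euclidean unit ball, fix a smooth bump there and choose pairwise disjoint closed balls $\bar B_{j,i}\subset\Int D$, indexed by $(j,i)\in\o\times\o$, with radii $r_{j,i}\to 0$ which, for each fixed $j$, accumulate at a single point $p_j$ and at no other point, the $p_j$ forming a closed discrete subset of $\Int D$. Let $\phi_{j,i}$ be smooth, supported in $\bar B_{j,i}$, obtained from the fixed bump by translation, dilation by $r_{j,i}$, multiplication by a small height $c_{j,i}>0$, and --- this is the delicate point --- a further prescription of its higher-order derivative profile, so that $\|\phi_{j,i}\|_{C^k}$ is as small as desired while the $C^{k+p}$-sizes are forced to grow in $i$ at rates we specify. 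Set $\Phi\co[0,1]^{\o\times\o}\to C^k(N)$, $\Phi(t)=h_\bu+\sum_{(j,i)}t_{j,i}\phi_{j,i}$. If $\sum_{(j,i)}\|\phi_{j,i}\|_{C^k}$ is finite and small then the series converges uniformly in $t$, so $\Phi$ is continuous; disjointness of the supports and $\phi_{j,i}\not\equiv 0$ make $\Phi$ injective, hence a topological embedding; $\Phi(t)=h_\bu$ off $\Int D$ by construction; and, since $\mathfrak D h_\bu|_D>\eta$ on the compactum $D$, boundedness of $\mathfrak D\co C^l(N)\to C(N)$ and smallness of $\sum_{(j,i)}\|\phi_{j,i}\|_{C^l}$ (here $l\le k$) force $\mathfrak D\Phi(t)|_D\ge\eta$ for every $t$. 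Thus $P:=\Phi\bigl([0,1]^{\o\times\o}\bigr)$ is a compact Hilbert cube contained in $C^k_\bu$.

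It then remains to identify $P\cap C^\infty(N)=\Phi(S)$ with $S:=\Phi^{-1}\bigl(C^\infty(N)\bigr)\subset[0,1]^{\o\times\o}$. Away from the points $p_j$ every $\Phi(t)$ is, locally, a finite sum of smooth functions, so $\Phi(t)$ is $C^\infty$ if and only if each series $\sum_i t_{j,i}\phi_{j,i}$ is $C^\infty$ near $p_j$; since the supports shrink to $p_j$ and are disjoint, a routine estimate shows this to be equivalent to $t_{j,i}\|\phi_{j,i}\|_{C^m}\to 0$ as $i\to\infty$ for every $m$. The derivative profiles are to be chosen so that this family of conditions over $m$ translates exactly into the requirement that $(t_{j,i})_i$ lie in a fixed $F_{\s\de}$-subset of $[0,1]^\o$, and so that the resulting $S$ is a convex $F_{\s\de}$-set that is nowhere locally compact and not $\s$-compact. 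By the characterization of $\M_2$-absorbing convex sets, $S$ is then $\M_2$-absorbing, and since $S$ and $\Si^\o$ are both convex (hence contractible, hence homotopy equivalent), the uniqueness theorem for $\M_2$-absorbing spaces gives $S\cong\Si^\o$. Combined with the first paragraph, this yields a closed copy of $\Si^\o$ in $f(C^\infty_\bu)$ and proves the theorem.

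The hard part is this last identification: matching the countable family of seminorm conditions defining $C^\infty(N)$ --- one for each differentiation order --- against the $\bigcap_n\bigcup_m$ shape of an $F_{\s\de}$-set, and doing it uniformly enough that the model $S$ comes out $\M_2$-universal and not, say, $\s$-compact. In practice this means: (a) constructing, on a ball of radius $r_{j,i}$, bump functions with prescribed and suitably wild higher-order derivative profiles; (b) choosing those profiles, the radii $r_{j,i}$, and the accumulation geometry of the $\bar B_{j,i}$ so that the combined conditions cut out the desired $F_{\s\de}$-set; and (c) proving both a sufficiency estimate (rapidly decaying sums of disjointly supported bumps are $C^\infty$) and its converse. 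This is, in substance, the analysis performed in~\cite[Corollary 4.9 and Theorem 5.1]{BanBel}.
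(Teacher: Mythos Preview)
The paper does not prove this theorem at all: it is quoted verbatim from~\cite[Theorem 5.1 and Corollary 4.9]{BanBel} and used as a black box in the proof of Lemma~\ref{lem: B_p is in M2-universal}. So there is no ``paper's own proof'' to compare against; your proposal is in effect a sketch of the argument in~\cite{BanBel}, and you yourself say as much in the final sentence.

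As an outline your proposal is sound and matches the strategy of~\cite{BanBel}: embed a Hilbert cube $P$ into $C^k_\bu$ via disjointly supported bumps added to $h_\bu$, use continuity and injectivity of $f$ on the compactum $P$ to transport the pair $(P,\,P\cap C^\infty(N))$ homeomorphically into $(f(P),\,f(P)\cap f(C^\infty_\bu))$, and arrange the bump profiles so that $\Phi^{-1}(C^\infty(N))\subset[0,1]^{\o\times\o}$ is a copy of $\Si^\o$. The reductions in your first paragraph are correct, and your smallness argument for membership in $C^k_\bu$ (via the strict inequality $\mathfrak D h_\bu|_D>\eta$ and continuity of $\mathfrak D$) is exactly right. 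The one place where your sketch is genuinely incomplete is the appeal to ``the characterization of $\M_2$-absorbing convex sets'': the properties you list for $S$ (convex, $F_{\s\de}$, nowhere locally compact, not $\s$-compact) are not by themselves a recognized characterization of $\M_2$-absorbing sets, and in~\cite{BanBel} the identification of $S$ with $\Si^\o$ is done by a direct construction rather than by quoting such a criterion. But since you already flag this step as ``the hard part'' and point to the correct reference, the proposal is an accurate roadmap rather than a flawed proof.
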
 

\begin{lem}
\label{lem: B_p is in M2-universal}
$\B_p$ is strongly $\mathcal M_2$-universal.
\end{lem}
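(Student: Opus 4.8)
The plan is to combine the three ingredients already assembled: that $\B_p$ is $\M_2$-universal (a special case of Theorem~\ref{thm: criterion M_2-univer}), that $\B_p$ is an ANR (Lemma~\ref{lem: between set is homotopy dense AR}), and that $\B_p$ carries a $Z$-set structure coming from Lemma~\ref{lem: sZ} together with the convexity of $\sf(\B_p)$. Recall that strong $\M_2$-universality asks, for every open cover $\mathcal U$, every $C\in\M_2$, every closed $B\subset C$, and every map $f\co C\to\B_p$ that restricts to a $Z$-embedding on $B$, to produce a $Z$-embedding $\bar f\co C\to\B_p$ with $\bar f|_B=f|_B$ and $\bar f$, $f$ being $\mathcal U$-close.

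First I would verify that $\B_p$ is $\M_2$-universal by specializing Theorem~\ref{thm: criterion M_2-univer}: take $N=\mathbb S^{n-1}$ (a closed manifold, so the ``rel $N\setminus\Int D$'' condition is imposed on a collar of a disk $D\subset\mathbb S^{n-1}$), $l=2$, and $\mathfrak D u=\mathrm{Hess}\,u+u$ evaluated in an orthonormal frame so that positivity of $\mathfrak D u|_D$ corresponds to positivity of the principal radii of curvature; the map $f$ is $\sf^{-1}$ restricted to an appropriate affine slice of $\sf(\B_p)$, which is injective and continuous. The pinned function $h_\bu$ is the support function of a large round ball. This realizes a slice of $\B_p$ as $f(C^\infty_\bu)$, hence $\M_2$-universal, and since $\B_p$ is convex, this slice can be moved around by rescaling and translation in $\sf(\B_p)$ to show that a copy of every member of $\M_2$ sits as a closed subset near any prescribed point. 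Second, I would upgrade $\M_2$-universality to the strong version. The standard mechanism: $\B_p$ is an ANR (so maps into it can be adjusted by small homotopies), it has the $\s Z$-set structure $\B_p=\bigcup_m I_m$ from Lemma~\ref{lem: sZ} (or $Z_m$) with each $I_m$ a $Z$-set, and $\sf(\B_p)$ is convex, hence admits small straight-line homotopies pushing maps into any prescribed dense subset. Given $f\co C\to\B_p$ with $f|_B$ a $Z$-embedding, one first uses the convex structure and the $Z$-set property of $f(B)$ to homotope $f$ (rel $B$, $\mathcal U$-small) to a map $f_1$ that is injective off $B$ — injectivity is achieved in the Fréchet-space $C(\mathbb S^{n-1})\supset\sf(\K_s)$ because $C\in\M_2$ is at most ``$F_{\s\de}$-dimensional'' and generic perturbations into an infinite-dimensional convex set separate points; then one perturbs further so that $f_1(C)$ avoids a cofinal sequence of the $Z$-sets $I_m$ off $B$, making $f_1$ a closed embedding with $Z$-set image. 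The formal packaging of this is exactly the hypothesis of the ``strong universality from universality'' criterion, e.g.\ \cite[Section 1.6 and Theorem 1.7.2]{BRZ-book}, whose input is: ANR, $\M_2$-universal, contained in a $\s Z$-set of itself in a way compatible with a convex (hence homotopy-dense-into-arbitrary-dense-subsets) ambient structure. I would cite that criterion rather than re-derive the unknotting-and-perturbation argument.

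The main obstacle is the injectivization step — arranging that the approximating map $\bar f$ is globally injective (not merely injective off $B$) while staying $\mathcal U$-close and fixing $f|_B$. The convexity of $\sf(\B_p)$ inside the Fréchet space $C(\mathbb S^{n-1})$ is what makes this possible: one can add to $f$ a small map of the form $q\mapsto \sf(\mathbb B^n)\cdot\varphi(q)$, or more precisely move along suitable directions transverse to $f(C)$, to separate points, and the openness of the positive-curvature condition guarantees that sufficiently small such perturbations land back in $\B_p$. This is the same convexity-driven argument flagged in question (a) of the introduction as essential and unavailable for $\B^\infty$. Once injectivity and closed-embedding-into-a-$Z$-set are in hand, the $Z$-embedding property of $\bar f$ follows because $\B_p\setminus\bar f(C)$ is still homotopy dense (its complement meets a cofinal family of the $I_m$), and $\bar f|_B=f|_B$ holds by construction, completing the verification of strong $\M_2$-universality.
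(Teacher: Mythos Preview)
There are two genuine gaps. First, your application of Theorem~\ref{thm: criterion M_2-univer} with $N=\mathbb S^{n-1}$ and $\mathfrak D u=\mathrm{Hess}\,u+u$ does not fit the hypotheses for $n\ge 3$: the theorem requires $\mathfrak D\co C^l(N)\to C(N)$ to be a \emph{scalar-valued} linear operator, and the condition ``positive principal radii of curvature'' is positive-definiteness of an $(n-1)\times(n-1)$ matrix, not a single linear inequality $\mathfrak D u>\eta$. The paper sidesteps this by taking $N=[0,1]$, $\mathfrak D h=h''$, and building bodies of revolution, so that convexity and positive curvature reduce to one scalar ODE inequality regardless of $n$; this also forces the paper to work in $\hat\B_p$ (no Steiner-point constraint) since the Steiner point of the rotated body varies with the profile, and then to transfer back to $\B_p$ via the factor-of-$\R^\o$ trick~\cite[Theorem 3.2.18]{BRZ-book}.

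Second, your upgrade from $\M_2$-universality to strong $\M_2$-universality is not grounded in a correct criterion. The reference you give (Section~1.6 and Theorem~1.7.2 of \cite{BRZ-book}) does not yield what you claim; the tool the paper actually uses is \cite[Proposition~5.3.5]{BRZ-book}, whose hypothesis is that the convex AR with SDAP contain an $\M_2$-universal subset that is \emph{closed in the ambient linear space} $L$ --- here $L=C^\infty(\mathbb S^{n-1})$ with the $C^0$ norm --- not merely closed in $\B_p$ or contained in a $\s Z$-set. Verifying this closedness in $L$ is the main technical work in the paper's proof (the last paragraph, showing that a $C^0$-limit of the constructed bodies still has $C^\infty$ support function and profile in $C^\infty_\bu$), and your sketch neither states this requirement nor addresses it. The informal injectivization-and-perturbation discussion you offer instead is not a substitute: without the closed-in-$L$ condition (or an equivalent input) the cited machinery does not engage.
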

\begin{proof}
Let ${\hat \B}_p$ denote
the hyperspace of all positively curved $C^\infty$ convex bodies in $\R^n$.
The map (\ref{form: st pt times Rn}) restricts to a homeomorphism ${\hat \B}_p\to\R^n\times\B_p$.
Thus the products of ${\hat \B}_p$ and $\B_p$ with $\R^\o$ are homeomorphic.

Lemmas~\ref{lem: between set is homotopy dense AR}--\ref{lem: between set has SDAP} 
show that $\B_p$ is an AR with SDAP, and hence so is ${\hat \B}_p$. 
By~\cite[Theorem 3.2.18]{BRZ-book} if $X$ is an ANR with SDAP, then $X$ is strongly 
$\mathcal M_2$-universal if and only if $X\times\R^\o$ is strongly $\mathcal M_2$-universal.
Thus it suffices to show that ${\hat \B}_p$ is strongly $\mathcal M_2$-universal.

By~\cite[Proposition 5.3.5]{BRZ-book} a convex AR with SDAP in a linear metric space $L$
is strongly $\M_2$-universal if it contains an $\mathcal M_2$-universal subset
that is closed in $L$. Set $L=C^\infty(\mathbb S^{n-1})$ with the $C^0$ norm.
Recall that $\sf$ maps $\K$ homeomorphically onto a convex subset of $L$.
By~\cite[Proposition 5.1]{Gho} $\sf({\hat \B}_p)$ is convex in $L$,
see also~\cite[Theorem 1.1]{BelJia}.

To find an $\mathcal M_2$-universal subset of ${\hat\B}_p$ we 
apply Theorem~\ref{thm: criterion M_2-univer} 
to $N=[0,1]$, $D=\left[\frac13,\frac12\right]$, $k=l=2$, $h_\bullet(r)=r^2$, $\eta=1$, 
and $\mathfrak D (h)=h^{\prime\prime}$.
Thus 
\[
C^2_\bullet=\left\{ 
u\in  C^2(\II)\,:\, u^{\prime\prime}\ge 1\ 
\text{and $u(r)=r^2$ for all $r\notin \left[\frac13,\frac12\right]$}
\right\}
\]
and $C^\infty_\bullet=C^2_\bullet\cap C^\infty(\II)$.
Define a map $f\co C^2_\bullet\to {\hat\B}_p$
as follows. 
Fix $A\in {\hat\B}_p$ that is invariant under $SO(n-1)$ rotations
about the $x_n$-axis and such that the portion of $\d A$ satisfying
$x_n\in [0,1]$ equals the paraboloid 
\[
\{x=(x_1,\dots , x_n)\in \R^n : x_n=r(x)^2\  \text{and}\ x_n\in [0,1]\}
\] 
where $r(x)$ is the distance from $x$ to the $x_n$-axis in $\R^n$.
Let $f$ send an element $q\in C^2_\bullet$ to the convex body $\mathbb B^n+A_q$
where $A_q$ obtained from $A$ by replacing 
$x_n=r^2$ with $x_n=q(r)$ in the above paraboloid portion of $\d A$.
One checks that $A_q$ has positive curvature, so that
both $A_q$ and $\mathbb B^n+A_q$ lie in ${\hat\B}_p$.
It is easy to see that $\sf\circ f$ is a homeomorphism onto its image. 
By Theorem~\ref{thm: criterion M_2-univer} 
$(\sf\circ f)(C^\infty_\bullet)$ is $\M_2$-universal.

It remains to show that $(\sf\circ f)(C^\infty_\bullet)$ is
closed in $L$.  Note that $f(C^\infty_\bullet)$ lies in a
compact subset of $\K_s$, which is therefore mapped by
$\sf$ homeomorphically onto its compact image.
Thus any limit point of $(\sf\circ f)(C^\infty_\bullet)$
lies in $\sf(\K_s)$ and it is enough to show that  
$f(C^\infty_\bu)$ in closed in the hyperspace consisting of convex bodies in $\R^n$
with $C^\infty$ support functions. Fix a sequence $\mathbb B^n+A_{q_m}$ in 
$f(C^\infty_\bu)$ that converges to a convex body with $C^\infty$ support function.
By construction the limit is of the form $\mathbb B^n+A_{\k}$
where $q_m\in  C^\infty_\bu$ converge in the uniform $C^0$ topology
to a (necessarily convex) function $\k$. By assumption $\sf(\mathbb B^n +A_\k)$ is $C^\infty$,
and hence so is $\sf(A_{\k})$. Now Lemma~\ref{lem: add ball} 
implies that $\mathbb B^n +A_\k\in{\hat\B}_p$. Hence $A_\k\in{\hat\B}_p$~\cite[Theorem 1.1]{BelJia}
and therefore $\k$ is $C^\infty$.
Since $q_m(r)=r^2$ for 
$r\notin \left[\frac13,\frac12\right]$, the same is true for $\k$. 
Set $p(r)=\frac{r^2}{2}$.
Since $q_m^{\prime\prime}\ge 1$, each function
$q_m-p$ is convex, and hence so is 
$\k-p=\displaystyle{\lim_{\scriptscriptstyle{m\to\infty}} q_m-p}$.
Since $\k$ is $C^\infty$, we get $(\k-p)^{\prime\prime}\ge 0$ so 
$\k^{\prime\prime}\ge 1$, and hence 
$\k\in C^\infty_\bu$ as claimed. 
\end{proof}

\begin{lem}
\label{lem: enlarge}
If $\B_p$ is $G_\de$ in $\mathcal D$, then
$\mathcal D$ is  strongly $\mathcal M_2$-universal.
\end{lem}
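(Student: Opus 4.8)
The plan is to deduce this from the strong $\mathcal M_2$-universality of $\B_p$ (Lemma~\ref{lem: B_p is in M2-universal}) by a transfer argument: one promotes strong universality from the homotopy dense subspace $\B_p$ to the ambient ANR $\mathcal D$ once one knows that the complement is small. So the first task is to understand the pair $(\mathcal D,\B_p)$ structurally. By Lemma~\ref{lem: between set has SDAP} the space $\mathcal D$ has SDAP, and by Lemma~\ref{lem: between set is homotopy dense AR} it is an AR, in particular an ANR. Moreover $\B_p$ is homotopy dense in $\mathcal D$: Schneider's regularization $\rho\co\K_s\times\II\to\K_s$ of Lemma~\ref{lem: regularization} satisfies $\rho(\cdot,0)=\mathrm{id}$ and $\rho(\mathcal D\times(0,1])\subset\B_p\subset\mathcal D$, so it restricts to a homotopy of $\mathcal D$ witnessing homotopy density; equivalently $\mathcal D\setminus\B_p$ is homotopy negligible in $\mathcal D$.

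Here is where I would invoke the hypothesis. Since $\B_p$ is $G_\de$ in $\mathcal D$, the complement $\mathcal D\setminus\B_p$ is an $F_\sigma$-subset of $\mathcal D$, say $\mathcal D\setminus\B_p=\bigcup_{n\in\o}F_n$ with each $F_n$ closed in $\mathcal D$. Each $F_n$ is contained in the homotopy negligible set $\mathcal D\setminus\B_p$, hence is itself homotopy negligible (the same contracting homotopy works), and being closed in the ANR $\mathcal D$ it is therefore a $Z$-set by~\cite[Theorem 1.4.4]{BRZ-book}. Thus $\mathcal D\setminus\B_p$ is a $\sigma Z$-set in $\mathcal D$. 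With this in hand I would apply the standard transfer principle for strong universality: if $A$ is a homotopy dense subspace of an ANR $X$ with $X\setminus A$ a $\sigma Z$-set, then $A$ is strongly $\mathcal C$-universal if and only if $X$ is, see~\cite[Section 1.7]{BRZ-book}. Taking $A=\B_p$, $X=\mathcal D$, $\mathcal C=\mathcal M_2$ gives the conclusion. For a self-contained treatment one unwinds the definition: given an open cover $\mathcal U$ of $\mathcal D$, a space $C\in\mathcal M_2$, a closed $B\subset C$, and $f\co C\to\mathcal D$ restricting to a $Z$-embedding on $B$, one first uses homotopy negligibility of $\mathcal D\setminus\B_p$ together with a map $\varphi\co C\to\II$ with $\varphi^{-1}(0)=B$ to move $f$ controlledly, rel $B$, to a map sending $C\setminus B$ into $\B_p$; then one uses strong $\mathcal M_2$-universality of $\B_p$, SDAP, and the $\sigma Z$-set $\mathcal D\setminus\B_p$ to adjust the result, again rel $B$, to a $Z$-embedding of $C$ into $\mathcal D$ that is $\mathcal U$-close to $f$.

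The main obstacle is precisely the patching in that last step. The difficulty is that $f(B)$ need not lie in $\B_p$, so one cannot simply push all of $f$ into $\B_p$ rel $B$ and quote strong universality of $\B_p$ verbatim on the pair $(C,B)$: one must reconcile the portion of $f$ over $B$, where $f$ is already a $Z$-embedding into $\mathcal D$ and has to be preserved exactly, with the portion pushed into $\B_p$. The smallness of $\mathcal D\setminus\B_p$ — i.e.\ that it is a $\sigma Z$-set, which is exactly what the $G_\de$ hypothesis buys — is what makes the reconciliation possible, and this is cleanly encapsulated by the transfer theorem cited above. Finally I would remark that, combined with Lemmas~\ref{lem: between set is homotopy dense AR}, \ref{lem: between set has SDAP}, \ref{lem: sZ} and \ref{lem: M_2}, this lemma shows that under its hypothesis $\mathcal D$ is in fact $\mathcal M_2$-absorbing, and hence homeomorphic to $\Si^\o$.
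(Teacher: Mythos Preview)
Your proof is correct and follows essentially the same approach as the paper: both establish that $\mathcal D$ is an ANR with SDAP in which $\B_p$ is homotopy dense and $G_\de$, and then invoke an enlarging/transfer result from \cite{BRZ-book} to promote strong $\mathcal M_2$-universality from $\B_p$ to $\mathcal D$. The paper cites this as \cite[Enlarging Theorem~3.1.5]{BRZ-book} (which is stated directly for a $G_\de$ homotopy dense subset of an ANR with SDAP) rather than Section~1.7, and does not pass through the intermediate $\sigma Z$ reformulation you give.
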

\begin{proof}
The~\cite[Enlarging Theorem 3.1.5]{BRZ-book}
implies that an ANR with SDAP is strongly $\M_2$-universal if and only if it contains
a  strongly $\mathcal M_2$-universal homotopy dense $G_\de$ subset.
By assumption $\B_p$ is $G_\de$ in $\mathcal D$.
The space $\B_p$ is strongly $\M_2$-universal 
due to Lemma~\ref{lem: B_p is in M2-universal}.
By Lemmas~\ref{lem: between set is homotopy dense AR}--\ref{lem: between set has SDAP} 
the space $\mathcal D$ is an ANR with SDAP and $\B_p$ is homotopy dense in $X$. 
\end{proof}

\begin{thm}
\label{thm: D homeo to sigma to the omega}
If $\a>0$ and $\mathcal D\setminus \B_p$ is $\s$-compact, then
$\mathcal D$ is homeomorphic to $\Si^\o$, and 
in particular, $\B_p$  is homeomorphic to $\Si^\o$. 
\end{thm}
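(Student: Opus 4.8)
The plan is to verify that $\mathcal D$ satisfies the hypotheses of the characterization of $\Si^\o$ as the (unique up to homeomorphism) $\mathcal M_2$-absorbing AR, i.e., the list of conditions packaged into ``$\mathcal M_2$-absorbing'' together with the uniqueness theorem for $\mathcal M_2$-absorbing spaces recorded in Section~\ref{sec: inf dim top}. Concretely, $\mathcal D$ must be shown to be (i) an ANR, (ii) equipped with SDAP, (iii) strongly $\mathcal M_2$-universal, (iv) a countable union of $Z$-sets, and (v) a countable union of closed subsets each belonging to $\mathcal M_2$; then, since $\Si^\o$ is also $\mathcal M_2$-absorbing and $\mathcal D$ is an AR (hence contractible, hence homotopy equivalent to $\Si^\o$), the Uniqueness statement forces $\mathcal D\cong\Si^\o$. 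Applying this with $\mathcal D=\B_p$ (which is sandwiched $\B_p\subset\B_p\subset\B^{1,\a}$ and has $\B_p\setminus\B_p=\emptyset$ trivially $\sigma$-compact) gives the final clause.

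First I would assemble (i), (ii) and the AR property directly from earlier lemmas: Lemma~\ref{lem: between set is homotopy dense AR} gives that $\mathcal D$ is an AR (in particular an ANR) and homotopy dense in $\K_s$, and Lemma~\ref{lem: between set has SDAP} gives SDAP. Next, for (iv) and (v): since $\a>0$, Lemma~\ref{lem: sZ} shows $\mathcal D$ is a $\sigma Z$-set, i.e., $\mathcal D=\bigcup_m Z_m$ with each $Z_m$ a $Z$-set in $\mathcal D$, which is exactly (iv). For (v), write $\mathcal D=\B_p\cup(\mathcal D\setminus\B_p)$; the hypothesis says $\mathcal D\setminus\B_p$ is $\sigma$-compact, hence a countable union of compacta, which lie in $\mathcal M_0\subset\mathcal M_2$; and $\B_p$ lies in $\mathcal M_2$ by Lemma~\ref{lem: B_p is M_2}, while being itself $\sigma Z$ in $\K_s$ (Lemma~\ref{lem: sZ}) it is a countable union of closed-in-$\K_s$, hence closed-in-$\mathcal D$, subsets, each a closed subset of an $\mathcal M_2$ space and therefore in $\mathcal M_2$. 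Intersecting with the $Z_m$'s if necessary one arranges the pieces to be closed in $\mathcal D$; combining the two lists gives $\mathcal D$ as a countable union of closed subsets in $\mathcal M_2$. Also note $\mathcal D\in\mathcal M_2$ by Lemma~\ref{lem: M_2}, since $\mathcal D\setminus\B_p$ is $\sigma$-compact hence in $\mathcal M_2$.

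The remaining and least routine point is (iii), strong $\mathcal M_2$-universality of $\mathcal D$. Here I would invoke Lemma~\ref{lem: enlarge}, which reduces this to showing $\B_p$ is a $G_\de$ subset of $\mathcal D$. This is where the main obstacle lies: one must argue that the positively-curved $C^\infty$ bodies form a $G_\de$ inside $\mathcal D$. The natural route is that $\B_p\setminus\B_p=\emptyset$-type reasoning does not apply since $\mathcal D$ may be strictly larger; instead, since $\mathcal D\setminus\B_p$ is $\sigma$-compact, it is an $F_\sigma$ in $\mathcal D$, so $\B_p=\mathcal D\setminus(\mathcal D\setminus\B_p)$ is $G_\de$ in $\mathcal D$ provided $\B_p$ is \emph{dense} in $\mathcal D$ and one knows $\mathcal D\setminus\B_p$ is in fact closed-in-$\mathcal D$ on a $\sigma$-compact exhaustion — but more carefully: an $F_\sigma$ set has $G_\de$ complement in any space, so the $\sigma$-compactness of $\mathcal D\setminus\B_p$ alone already yields that $\B_p$ is $G_\de$ in $\mathcal D$. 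Thus the hard work is entirely front-loaded into the hypothesis ``$\mathcal D\setminus\B_p$ is $\sigma$-compact,'' and Lemma~\ref{lem: enlarge} then delivers (iii). With (i)--(v) in hand, $\mathcal D$ is $\mathcal M_2$-absorbing; being an AR it is homotopy equivalent to the AR $\Si^\o$, so by the Uniqueness property of $\mathcal M_2$-absorbing spaces $\mathcal D\cong\Si^\o$, and specializing to $\mathcal D=\B_p$ completes the proof.
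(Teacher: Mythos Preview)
Your proposal is correct and follows essentially the same route as the paper: verify that $\mathcal D$ is an AR with SDAP (Lemmas~\ref{lem: between set is homotopy dense AR}--\ref{lem: between set has SDAP}), is $\sigma Z$ (Lemma~\ref{lem: sZ}, using $\alpha>0$), lies in $\mathcal M_2$ (Lemma~\ref{lem: M_2}, using that $\sigma$-compact sets are in $\mathcal M_2$), and is strongly $\mathcal M_2$-universal (Lemma~\ref{lem: enlarge}, using that $\sigma$-compact sets are $F_\sigma$ so $\B_p$ is $G_\delta$ in $\mathcal D$); then invoke uniqueness of the $\mathcal M_2$-absorbing AR. Your discussion of condition (v) is more elaborate than necessary---once you have $\mathcal D\in\mathcal M_2$ from Lemma~\ref{lem: M_2}, the space $\mathcal D$ is itself a single closed subset of $\mathcal D$ lying in $\mathcal M_2$, so the decomposition argument via pieces of $\B_p$ is superfluous (and the claim that the $\sigma Z$ pieces of $\B_p$ are closed in the larger $\mathcal D$ would need more care).
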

\begin{proof}
Note that any $\s$-compact subspace is both $F_\s$
and in $\M_2$. By the above lemmas $\mathcal D$ is an AR with SDAP, $\s Z$, strongly $\M_2$-universal, and
is in $\M_2$, so that $\mathcal D$ is $\M_2$-absorbing, and the only such space is $\Si^\o$.
\end{proof}

Can $\s$-compactness of $\mathcal D\setminus \B_p$ 
in Theorem~\ref{thm: D homeo to sigma to the omega} can be replaced by a 
weaker condition that holds for examples of interest such as
$\B^\infty$? The following result illustrates what could go wrong.  

\begin{thm}
\label{thm: ex of D not in M_2}
There is a hyperspace $\mathcal D$ with $\B_p\subset\mathcal D\subset\B^{1,\a}$ 
such that $\mathcal D\setminus\B_p$ embeds into the Cantor set, 
$\B_p$ is open in $\mathcal D$, and $\mathcal D$ is not a topologically
homogeneous, and in particularly, not a $\Si^\o$-manifold.
\end{thm}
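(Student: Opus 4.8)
The plan is to build $\mathcal D$ by adjoining to $\B_p$ a carefully chosen countable discrete family of non-$C^\infty$ convex bodies, arranged so that $\B_p$ is open (hence the added set is closed and discrete in $\mathcal D$) but so that some point of the added set fails to have a neighborhood homeomorphic to a neighborhood of any point of $\B_p$. Since $\B_p$ is an $\Si^\o$-manifold by Theorem~\ref{thm: D homeo to sigma to the omega}, every point of $\B_p$ has arbitrarily small neighborhoods that are $\Si^\o$-manifolds, in particular are not locally compact and contain no compact neighborhoods. So it suffices to arrange a single added point $K_*$ all of whose sufficiently small neighborhoods in $\mathcal D$ are \emph{not} $\Si^\o$-manifolds — for concreteness, a point with a compact neighborhood. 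Then no homeomorphism of $\mathcal D$ can carry $K_*$ to a point of $\B_p$, and since $\B_p$ is dense (Schneider's regularization, Lemma~\ref{lem: regularization}) while $\{K_*\}$ is not, $\mathcal D$ is not topologically homogeneous; not being homogeneous, it is not an $\Si^\o$-manifold, which is homogeneous.

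Concretely, I would pick a sequence $(D_j)_{j\in\o}$ in $\B_p$ converging in the Hausdorff metric to a convex body $K_*\in\B^{1,\a}\setminus\B_p$ — e.g., $K_*$ a $C^{1,\a}$ but not $C^\infty$ (or not $C^2$) strictly convex body obtained as a $C^0$-limit of smooth positively curved bodies, shifted to have Steiner point at $o$ — and set $\mathcal D=\B_p\cup\{K_*\}\cup\{D_j : j\in\o\}$, choosing the $D_j$ so that $\{K_*\}\cup\{D_j\}$ is closed in $\K_s$ and homeomorphic to a convergent sequence with its limit (which embeds in the Cantor set). Here $\B_p$ is open in $\mathcal D$: its complement $\{K_*\}\cup\{D_j\}$ is closed, and since that complement is compact and totally disconnected it embeds into the Cantor set as required. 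The key point is that $K_*$, viewed in $\mathcal D$, has the neighborhood $\{K_*\}\cup\{D_j : j\ge m\}$ for each $m$, and these neighborhoods are compact; alternatively one arranges that small metric balls about $K_*$ in $\mathcal D$ meet $\B_p$ only in points forced (by choosing the $D_j$ appropriately within $\B_p$) to accumulate back on $K_*$, so that the closure of such a ball is compact. Thus $K_*$ has a local base of compact neighborhoods, which no point of the $\Si^\o$-manifold $\B_p$ has.

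The main obstacle is the interplay between two demands: on one hand $\B_p$ must be \emph{open} in $\mathcal D$ (so the added set is closed), and on the other the added set must genuinely change the local structure at $K_*$ rather than just dangling a harmless arc or isolated point — an isolated point or a tame closed discrete set added to an $\Si^\o$-manifold can leave it an $\Si^\o$-manifold, so the construction must ensure the \emph{local} failure of the $\Si^\o$-manifold property (equivalently, of local non-compactness or of strong universality) at $K_*$. The cleanest way to secure this is to choose the approximating bodies $D_j$ so that the entire set $\{K_*\}\cup\{D_j\}$ is \emph{isolated from the rest of $\B_p$ at the scale of $K_*$}: namely, fix a small $\e$ and arrange (using the freedom in Schneider's regularization and in scaling) that the only points of $\B_p$ within Hausdorff distance $\e$ of $K_*$ are finitely many of the $D_j$, which already accumulate only at $K_*$. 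Then the $\e$-ball about $K_*$ in $\mathcal D$ is a convergent sequence with its limit, compact, and so not a neighborhood of the $\Si^\o$-manifold type. I would verify the compatibility of "$\B_p$ open" with "local base of compact neighborhoods at $K_*$" carefully, as that is where a naive construction breaks; everything else — the embedding of the complement into the Cantor set, the density of $\B_p$, and the deduction that homogeneity fails — is routine.
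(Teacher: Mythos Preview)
Your construction has a fatal gap. You write that you will ``arrange\ldots\ that the only points of $\B_p$ within Hausdorff distance $\e$ of $K_*$ are finitely many of the $D_j$,'' but $\B_p$ is a \emph{fixed} hyperspace, not something you get to thin out: it is dense in $\K_s$ (Lemma~\ref{lem: regularization}), so every Hausdorff ball about $K_*$ meets $\B_p$ in a nonempty open subset of $\B_p$, which is an open piece of the $\Si^\o$-manifold $\B_p$ and hence far from compact. No point of your $\mathcal D$ can have a compact neighborhood, and the argument for inhomogeneity via local compactness collapses.

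Worse, your proposed $\mathcal D\setminus\B_p$ is either a single point $\{K_*\}$ (since your $D_j$ already lie in $\B_p$) or at best a compact convergent sequence; in particular it is $\sigma$-compact. But Theorem~\ref{thm: D homeo to sigma to the omega} says precisely that whenever $\a>0$ and $\mathcal D\setminus\B_p$ is $\sigma$-compact, $\mathcal D$ \emph{is} homeomorphic to $\Si^\o$. So your example is not a counterexample at all---it is another instance of $\Si^\o$.

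The paper's proof works on an entirely different principle: Borel complexity rather than local compactness. One takes a subset $\Lambda$ of the Cantor set that lies \emph{outside} the class $\M_2$ of absolute $F_{\sigma\delta}$-sets (such subsets exist because the Borel hierarchy of an uncountable Polish space does not stabilize), and then uses Lemma~\ref{lem: any space embeds} to realize $\Lambda$ inside $\B^{1,\a}\setminus\B_p$ as a set closed in $\Lambda\cup\B_p$. Since $\M_2$ is closed-hereditary and $\Lambda\notin\M_2$, the space $\mathcal D=\Lambda\cup\B_p$ is not in $\M_2$, hence not a $\Si^\o$-manifold; homogeneity then fails because $\B_p$ is open in $\mathcal D$ and is a $\Si^\o$-manifold. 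The moral is that the obstruction must be global (descriptive-set-theoretic), not local: any $\mathcal D$ squeezed between $\B_p$ and $\B^{1,\a}$ automatically inherits the local analytic properties (AR, SDAP, homotopy density of $\B_p$) from Lemmas~\ref{lem: between set is homotopy dense AR}--\ref{lem: between set has SDAP}, so the only way to break the $\Si^\o$-structure is to push $\mathcal D$ out of $\M_2$.
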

\begin{proof}
For any uncountable Polish space, such as the Cantor set, the Borel hierarchy of its
subsets does not stabilize~\cite[Theorem 22.4]{Kec-book-set-th}, 
so in particular, it contains a subset not in $\M_2$.
Use Lemma~\ref{lem: any space embeds} below to embed it
onto a subset $\Lambda$ of $\B^{1,\a}\setminus\B_p$
that is closed in $\Lambda\cup\B_p$. Since 
$\M_2$ is closed-hereditary, $\Lambda\cup\B_p$ is not in $\M_2$ and hence
not a $\Si^\o$-manifold. If $\Lambda\cup\B_p$ were topologically homogeneous,
then it would be a $\Si^\o$-manifold because 
the $\Si^\o$-manifold $\B_p$ is open in $\Lambda\cup\B_p$.
\end{proof}

The earlier results in this section imply that $\mathcal D$ in Theorem~\ref{thm: ex of D not in M_2}
is a strongly $\M_2$-universal ANR with SDAP which is also $\s Z$ if $\a>0$.

\begin{lem}
\label{lem: any space embeds}
Any 
space is homeomorphic to
a subset $\Lambda$ of $\B^{1,\a}\setminus\B_p$ 
such that $\B_p$ is open in $\Lambda\cup\B_p$.
\end{lem}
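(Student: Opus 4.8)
The plan is to realize an arbitrary separable metrizable space $S$ as a subset of the parameter space in a family of $C^{1,\a}$ convex bodies, none of which is $C^2$, attached to a fixed strictly convex $C^\infty$ body in such a way that the ``smooth part'' of the hyperspace stays open. Concretely, I would fix a ball $\mathbb B^n\subset\R^n$, a point $u\in\mathbb S^{n-1}$, and look at convex bodies obtained from $\mathbb B^n$ by replacing a small spherical cap around $u$ with a graph of a convex $C^{1,\a}$ function whose second derivative blows up at the apex (so the body is genuinely $C^{1,\a}$ but not $C^2$, with a single ``conical-curvature'' point), and by additionally perturbing the boundary in a bounded region in a way parametrized by $S$. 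The support-function picture is cleanest: by the properties of $\sf$ in Section~\ref{sec: conv geom}, it suffices to produce, in $C(\mathbb S^{n-1})$, a continuous injection of $S$ into the set of $C^{1,\a}$ support functions that fail to be $C^2$ at a prescribed point, with image staying in the convex cone $\sf(\K_s)$ and such that $\sf(\B_p)$ remains open near it.

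First I would use the standard fact (e.g.\ via the Urysohn metrization theorem, or directly) that any separable metrizable space $S$ embeds into the Hilbert cube $Q=[-1,1]^\o$; so it is enough to build a continuous injection $\Phi\co Q\to \B^{1,\a}\setminus\B_p$ whose restriction to the embedded copy of $S$ is the required $\Lambda$, provided $\B_p$ is open along $\Phi(Q)$. Second, I would write down an explicit one-parameter family of radial ``bump'' profiles: pick a $C^\infty$ function $\phi\co[0,1]\to[0,\infty)$ supported in a small interval with $\phi>0$ on its interior, and for $t=(t_i)\in Q$ form the support function $h_t = h_{\mathbb B^n} + \sum_i 2^{-i}\,t_i\,\psi_i + g$, where the $\psi_i\in C^\infty(\mathbb S^{n-1})$ are supported in disjoint small caps away from $u$ (with $C^2$-norms small enough that $h_{\mathbb B^n}+\sum 2^{-i}t_i\psi_i$ stays a support function of a $C^\infty$, hence automatically positively curved since we started from a ball, body by Lemma~\ref{lem: add ball} applied to a small perturbation), and $g$ is a fixed $C^{1,\a}$-but-not-$C^2$ radial profile supported in a cap around $u$, chosen with $\mathrm{Hess}$ degenerating at $u$ — say $g$ corresponding near $u$ to the graph $x_n = c\,\rho^{1+\a}$ in suitable coordinates, which is $C^{1,\a}$, convex, and has unbounded curvature. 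Because the $\psi_i$-perturbation and the $g$-perturbation have disjoint supports, the resulting body is always a convex body that is $C^{1,\a}$ and is \emph{never} $C^2$ (the non-$C^2$ point at $u$ survives every allowed perturbation), so $\Phi(Q)\subset \B^{1,\a}\setminus\B_p$; continuity and injectivity of $\Phi$ (the latter from the linear independence of the $\psi_i$ after restricting to the relevant caps) are routine, and $\Lambda:=\Phi(S)$ is then homeomorphic to $S$.

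Third, and this is the step I expect to be the main obstacle, I must check that $\B_p$ is open in $\Lambda\cup\B_p$ — equivalently that $\Lambda$ is closed in $\Lambda\cup\B_p$, i.e.\ that no sequence in $\Lambda$ converges (in the Hausdorff metric on $\K_s$) to a body in $\B_p$. The point is that a limit in $\B_p$ would be a $C^\infty$ positively curved body, but every element of $\Lambda$ has a uniform ``curvature defect'' at a normal direction near $u$: precisely, the support function of each $\Phi(t)$ restricted to a fixed small cap $U$ around $u$ is exactly $h_{\mathbb B^n}|_U + g|_U$, independent of $t$, and this fixed function is not $C^2$ on $U$. Since $\sf$ is an isometry for the $C^0$ norms and convergence of support functions in $C^0$ on $\mathbb S^{n-1}$ for bodies with a uniform lower curvature bound would upgrade (by the Arzel\`a--Ascoli argument already used in the proof of Lemma~\ref{lem: sZ}) to $C^2$-convergence on $U$, the $C^2$ limit would have to agree with the non-$C^2$ function $h_{\mathbb B^n}|_U+g|_U$ on $U$ — a contradiction. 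Hence no such limit exists, $\Lambda$ is closed in $\Lambda\cup\B_p$, and the lemma follows. The only real care needed is to make the construction genuinely continuous from $Q$ and to ensure the $\psi_i$-sum keeps us inside $\sf(\K_s)$ (handled by taking the $\psi_i$ with sufficiently small $C^2$-norm relative to the curvature of $\mathbb B^n$), both of which are elementary once the supports are chosen disjoint.
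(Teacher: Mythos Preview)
Your overall strategy is correct and takes a more hands-on route than the paper. Both arguments first embed the given space into $Q$ and then place a copy of $Q$ inside $\B^{1,\a}\setminus\B_p$; they differ only in how that copy of $Q$ is produced. The paper fixes $K\in\B^{1,\a}$ with $h_K\notin C^\infty$, observes that for fixed $t\in(0,1]$ the map $D\mapsto tK+(1-t)D$ embeds $\B_p$ into $\B^{1,\a}\setminus\B_p$ (using~\cite{KraPar} for the target regularity and linearity of $\sf$ to rule out $\B_p$), and then invokes the already-established fact $\B_p\cong\Si^\o$ to find a copy of $Q$ in the image. Your construction builds the embedding of $Q$ explicitly and thereby avoids appealing to $\B_p\cong\Si^\o$, which is a nice economy.

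Two remarks. First, your closedness step is more elaborate than needed: once $\Phi\co Q\to\K_s$ is continuous with image disjoint from $\B_p$, the set $\Phi(Q)$ is compact, hence closed in $\K_s$, and any $\Lambda\subset\Phi(Q)$ is then automatically closed in $\Lambda\cup\B_p$; no Arzel\`a--Ascoli argument is required. The paper makes exactly this compactness observation. Second, there is a slippage between boundary regularity and support-function regularity. You begin by describing a cap replacement on $\partial D$ (which does yield a $C^{1,\a}$ body), but then switch to adding a $C^{1,\a}$ bump $g$ to the \emph{support function} and assert the resulting body lies in $\B^{1,\a}$. The implication ``$h_D\in C^{1,\a}\Rightarrow \partial D$ is $C^{1,\a}$'' is not what the correspondence lemma in Section~\ref{sec: conv geom} provides (that lemma needs $k\ge 2$ and positive curvature), and in general support-function regularity reflects the regularity of the polar body, not of $D$ itself. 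You can fix this either by sticking to your first description (modify the boundary directly in disjoint caps), or by choosing one body $K_0\in\B^{1,\a}\setminus\B_p$ with $h_{K_0}\notin C^\infty$ and setting $\Phi(t)=K_0+D_t$ for a $Q$-parametrized family $D_t\in\B_p$; then~\cite{KraPar} gives $\Phi(t)\in\B^{1,\a}$ and the non-smoothness of $h_{K_0}$ forces $\Phi(t)\notin\B_p$, which is essentially the paper's mechanism.
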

\begin{proof}
Fix $K\in\B^{1,\a}$ whose support function $h_K$ is not $C^\infty$. 
For $t\in \II$ consider the map $f_t\co\B_p\to \B^{1,\a}$
given by $f_t(D)=tK+(1-t)D$. By~\cite{KraPar} the image of $f_t$
is in $\B^{1,\a}$, but is it not in $\B_p$ for $t\neq 0$
because if $h_{f_t(D)}\in\B_p$ then
$h_K$ is a linear combination of $C^\infty$ functions.
For each $t\neq 0$ the map $f_t$ is a topological embedding.
(Indeed, the map is injective as we can cancel $tK$~\cite[p.48]{Sch-book}, and moreover
if $f_t(D_1)$, $f_t(D_2)$
are close then so are their support functions, and after subtracting $th_K$ we conclude
that the support functions of $(1-t)D_1$, $(1-t)D_2$ are close, and hence so are $D_1$, $D_2$.)
Since $\B_p$ is homeomorphic to
$\Si^\o$ the space $\B^{1,\a}\setminus\B_p$ contains a topological copy of $Q$
which must be closed in $Q\cup\B_p$ since $Q$ is compact.
Any (separable metric) space embeds into $Q$. If $\Lambda$ is the image of
such an embedding into the above copy of $Q$, 
then $\Lambda$ is closed in $\Lambda\cup\B_p$.
\end{proof}

\section{Homeomorphisms of pairs}
\label{sec: pairs}

In this section we finish the proof of Theorem~\ref{thm: intro hyperspace}
by making the homeomorphisms in 
Sections~\ref{sec: hyperspace of dim >1}--\ref{sec: hyp homeo to Si-to-o}
compatible. This is standard but somewhat technical.

If $Y$ is a subspace of $X$, then $(X,Y)$ is a {\em pair\,}. 
A pair $(M, X)$ is {\em $(\M_0, \M_2)$-absorbing\,} if  $(M, X)$ is 
strongly $(\M_0, \M_2)$-universal and $M$ contains a sequence of  
compact subsets $(K_l)_{l\in\o}$ 
such that each $K_l\cap X$ is in $\M_2$ and $\bigcup_{l\in\o} K_l$
is a $\s Z$-set that contains $X$.
(A definition of a
strongly  $(\M_0, \M_2)$-universal pair
can found in~\cite[Section 1.7]{BRZ-book} and
is not essential for what follows). 

\begin{lem} 
\label{lem: pair absorb}
If $U$ is an open subset of a $Q$-manifold $M$, and
$X$ is a homotopy dense subset of $M$ such that $X$ is a $\Si^\o$-manifold 
and $X$ lies in a $\s Z$-subset of $M$, then
$(U, U\cap X)$ is $(\M_0, \M_2)$-absorbing.
\end{lem}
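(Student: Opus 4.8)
The plan is to verify the two clauses in the definition of $(\M_0,\M_2)$-absorbing: (i) strong $(\M_0,\M_2)$-universality of the pair $(U, U\cap X)$, and (ii) the existence of a sequence of compacta $(K_l)$ in $U$ with $K_l\cap X\in\M_2$ whose union is a $\sigma Z$-set containing $U\cap X$. Both are local-to-global statements that should reduce to the corresponding facts for $M$ and $X$ themselves. Since $M$ is a $Q$-manifold and $X$ is a $\Si^\o$-manifold that is homotopy dense in $M$ and lies in a $\sigma Z$-subset $Z=\bigcup_j Z_j$ of $M$, the idea is that an open subset $U$ inherits all of this: $U$ is again a $Q$-manifold, $U\cap X$ is again a $\Si^\o$-manifold (open subsets of $\Si^\o$-manifolds are $\Si^\o$-manifolds), $U\cap X$ is homotopy dense in $U$ (homotopy density is inherited by open subsets of ANRs, or one reroutes the contracting homotopy of $M$ to stay in $U$ near $U$), and $U\cap X\subset\bigcup_j (U\cap Z_j)$ is a $\sigma Z$-subset of $U$.

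First I would set up (ii). Write $M$ as an increasing union of compacta; intersecting with $U$ and combining with the $\sigma Z$-sets $Z_j$ exhibiting $X\subset Z$ in $M$, produce compacta $K_l\subset U$ whose union is a $\sigma Z$-set of $U$ containing $U\cap X$ — here one uses that a $Z$-set of $M$ meeting $U$ is a $Z$-set of $U$, and that a closed-in-$U$ $\sigma$-compact set sitting inside a $\sigma Z$-set is itself a $\sigma Z$-set. To get $K_l\cap X\in\M_2$: the sets $Z_j\cap X$ are closed subsets of the $\Si^\o$-manifold $X$, hence are in $\M_2$ (being closed subsets of a space which is locally $\Si^\o$, hence $F_{\sigma\delta}$ in any compactification), and $\M_2$ is closed-hereditary and finitely additive, so each $K_l\cap X$, being a closed subset of a finite union of such, is in $\M_2$. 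This clause is essentially bookkeeping.

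The substantive clause is (i), strong $(\M_0,\M_2)$-universality of the pair. Here the cleanest route is to invoke the characterization/recognition machinery from~\cite{BRZ-book}: a pair $(M,X)$ with $M$ a $Q$-manifold, $X$ homotopy dense in $M$, and $X$ an $\M_2$-absorbing space (equivalently a $\Si^\o$-manifold) lying in a $\sigma Z$-subset of $M$, is itself $(\M_0,\M_2)$-absorbing, and in particular strongly $(\M_0,\M_2)$-universal; the statement being proved is just the relativization of this to an arbitrary open $U\subseteq M$. The actual argument for strong $(\M_0,\M_2)$-universality is the standard absorption scheme: given a compact pair $(C,B)$ with $B\subseteq C$, a map $f\co C\to U$ restricting to a $Z$-embedding on $B$, and an open cover $\mathcal U$, one first uses $Z$-set unknotting / general position in the $Q$-manifold $U$ to make $f$ a $\mathcal U$-close $Z$-embedding rel $B$ with $f(C)$ meeting $X$ exactly where required, then pushes $f(C\setminus B)$ into the $\Si^\o$-manifold $U\cap X$ using homotopy density of $U\cap X$ in $U$ together with the strong $\M_2$-universality of $U\cap X$ applied chart-by-chart, controlling the adjustments so they stay $\mathcal U$-small and fix $B$. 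The main obstacle I expect is precisely this last synchronization step — matching the $Q$-manifold ambient general-position moves with the $\Si^\o$-manifold absorption moves so that the corrections are simultaneously small, fix $B$, and land the right part of the image in $X$ — but this is exactly the content of the ``pair'' recognition theorems in~\cite[Section 1.7]{BRZ-book}, so in practice the proof should be a citation of that machinery after checking the four hypotheses ($U$ a $Q$-manifold, $U\cap X$ a $\Si^\o$-manifold, $U\cap X$ homotopy dense in $U$, and $U\cap X$ inside a $\sigma Z$-subset of $U$), all of which are immediate from the corresponding properties of $(M,X)$.
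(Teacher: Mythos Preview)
Your proposal is correct and follows essentially the same route as the paper. The paper's proof is considerably terser: for strong $(\M_0,\M_2)$-universality it cites \cite[Theorem~3.1.3]{BRZ-book} directly (using only that $U$ is a Polish ANR and $U\cap X$ is strongly $\M_2$-universal and homotopy dense), and for the second clause it just observes that $M$, $U$, and every $Z$-set in $M$ are $\sigma$-compact, so $U\cap X$ is covered by countably many compact $Z$-sets whose intersections with $X$ are closed in the $\Si^\o$-manifold $X$ and hence in $\M_2$---no need to separately track the $Z_j$'s and an exhaustion of $U$ as you do.
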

\begin{proof}
Any $\Si^\o$-manifold is strongly $\M_2$-universal and any
$Q$-manifold is a Polish ANR, hence  
by~\cite[Theorem 3.1.3]{BRZ-book} the pair $(U, U\cap X)$
is strongly $(\M_0,\M_2)$-universal.
Since $M$ is $\s$-compact, so is $U$ and any $Z$-set in $M$. 
Thus $U\cap X$ lies in a countable union of compact $Z$-sets in $M$,
and clearly the intersection of $U\cap X$ with any compact subset is in $\M_2$.
\end{proof}

\begin{lem}
\label{lem: absorb pairs ex}
The following pairs $(M,X)$ are $(\M_0, \M_2)$-absorbing:
\begin{enumerate}
\vspace{-2pt}
\item[\textup{(1)}] 
$M=V$ and $V\cap\Si^\o$ where $V$ is any open subset of $Q^\o$,  
\vspace{3pt}
\item[\textup{(2)}] $M=\K_s$ and $X\supset\B_p$ 
such that $X\setminus \B_p$ is $\s$-compact.
\vspace{3pt}
\end{enumerate}
\end{lem}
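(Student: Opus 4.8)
The plan is to verify the two cases against the definition of $(\M_0,\M_2)$-absorbing pair, leaning on Lemma~\ref{lem: pair absorb} for case (1) and assembling the earlier results of Sections~\ref{sec: hyperspace of dim >1}--\ref{sec: hyp homeo to Si-to-o} for case (2).

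\textbf{Case (1).} Here I would simply apply Lemma~\ref{lem: pair absorb} with $M=Q^\o$, $U=V$, and $X=\Si^\o$. One needs to know that $Q^\o$ is a $Q$-manifold (indeed $Q^\o\cong Q$), that $\Si^\o$ is a $\Si^\o$-manifold (trivially), that $\Si^\o$ is homotopy dense in $Q^\o$, and that $\Si^\o$ lies in a $\sigma Z$-subset of $Q^\o$. Homotopy density of $\Si^\o$ in $Q^\o$ follows from the factorwise statement that $\Si$ is homotopy dense in $Q$ together with the standard fact that a countable product of homotopy-dense subsets is homotopy dense. For the $\sigma Z$ condition, $\Si^\o = \bigcup_{k\in\o} C_k$ where $C_k$ is the set of sequences whose $k$th coordinate has sup-norm $\le 1-\tfrac1{k+1}$ in the radial sense; each $C_k$ is a $Z$-set in $Q^\o$ because one can push any map $Q\to Q^\o$ off $C_k$ by perturbing finitely many coordinates toward the boundary of $Q$. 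With these checked, Lemma~\ref{lem: pair absorb} gives that $(V, V\cap\Si^\o)$ is $(\M_0,\M_2)$-absorbing.

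\textbf{Case (2).} Here $M=\K_s$ is a $Q$-manifold since $\K_s$ is homeomorphic to $Q\times[0,1)$ by the first lemma of Section~\ref{sec: hyperspace of dim >1}. By Lemma~\ref{lem: between set is homotopy dense AR}, $X$ is homotopy dense in $\K_s$. By Theorem~\ref{thm: D homeo to sigma to the omega} (applicable because $X\setminus\B_p$ is $\sigma$-compact), $X$ is homeomorphic to $\Si^\o$, hence is a $\Si^\o$-manifold. By Lemma~\ref{lem: sZ}, $X$ lies in a $\sigma Z$-subset of $\K_s$ — namely $\bigcup_m \hat Z_m$, which is $\sigma Z$ and contains $\mathcal D\supset X$; more precisely one applies Lemma~\ref{lem: sZ} to the hyperspace $\mathcal D = X$ itself, which satisfies $\B_p\subset X\subset\B^{1,\a}$. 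Then Lemma~\ref{lem: pair absorb} with $U=M=\K_s$ yields that $(\K_s, X)$ is $(\M_0,\M_2)$-absorbing.

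\textbf{Main obstacle.} The routine checks above are genuinely routine; the one point that requires a little care is confirming that the hypotheses of Lemma~\ref{lem: pair absorb} are literally met in case (2) — in particular that $\K_s$ is $\sigma$-compact (it is, being homeomorphic to $Q\times[0,1)$) so that the $\sigma Z$-subset containing $X$ is a countable union of \emph{compact} $Z$-sets, and that intersecting with a compact set keeps us in $\M_2$ (automatic, since compact spaces are in $\M_2$ and $\M_2$ is closed-hereditary). The only real content that is not packaged in Lemma~\ref{lem: pair absorb} is the invocation of Theorem~\ref{thm: D homeo to sigma to the omega} to identify $X$ as a $\Si^\o$-manifold, and of Lemma~\ref{lem: sZ} to place $X$ inside a $\sigma Z$-set — both of which demand the standing hypotheses $\a>0$ (for the $\sigma Z$ conclusion) and $X\setminus\B_p$ being $\sigma$-compact, which are exactly what is assumed. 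So the proof is a short assembly argument with no hidden difficulty beyond bookkeeping of which earlier lemma supplies which clause of the definition.
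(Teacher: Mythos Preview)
Your approach via Lemma~\ref{lem: pair absorb} matches the paper's, and case~(1) is fine; the paper obtains homotopy density from convexity and density of $\Si^\o$ in $Q^{\,\o}$ rather than from products, and exhibits the $\sigma Z$ cover by constraining a single $Q$-coordinate, but these are cosmetic differences.

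In case~(2) there is a genuine gap. You write that $X$ ``satisfies $\B_p\subset X\subset\B^{1,\a}$,'' but the lemma only assumes $\B_p\subset X\subset\K_s$ with $X\setminus\B_p$ $\sigma$-compact; nothing prevents $X$ from containing, say, a polytope. Hence Theorem~\ref{thm: D homeo to sigma to the omega} and Lemma~\ref{lem: sZ} cannot be invoked with $\mathcal D=X$, and your appeal to a ``standing hypothesis $\a>0$'' is misplaced --- no $\a$ appears in the statement of part~(2). The paper's two-line proof sidesteps this by citing only facts about $\B_p$: homotopy density of $\B_p$ in $\K_s$ (hence of any $X\supset\B_p$), and containment of $\B_p$ in a $\sigma Z$-subset of $\K_s$ (Lemma~\ref{lem: sZ} applied with $\mathcal D=\B_p$ and any fixed $\a\in(0,1]$). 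The leftover piece $X\setminus\B_p$ also lies in a $\sigma Z$-set, since each of its compact parts is closed in $\K_s$ and disjoint from the homotopy-dense $\B_p$, hence a $Z$-set. Note, however, that neither your argument nor the paper's terse proof explicitly checks the remaining hypothesis of Lemma~\ref{lem: pair absorb} --- that $X$ itself is a $\Si^\o$-manifold --- for general $X$; this is clear for $X=\B_p$ (the only instance actually used, in the proof of Theorem~\ref{thm: intro hyperspace}) via Theorem~\ref{thm: D homeo to sigma to the omega}, but the general claim requires additional work not supplied in either place.
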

\begin{proof}
Let us verify the assumptions of Lemma~\ref{lem: pair absorb}.

(1) Since $\Si^\o$ is convex and dense in $Q^{\,\o}$, it is
also homotopy dense in $Q^{\,\o}$, see~\cite[Exercise 13 in 1.2]{BRZ-book}.
Hence $X$ is homotopy dense in $M$.
To show that $X$ lies in a $\s Z$-subset of $M$
let $Q_k\subset Q$ be the set of sequences $(t_i)_{i\in\o}$ with $|t_i|\le 1-\frac{1}{k}$
and \[
N_k=\{
(q_i)\in Q^\o\, : \, q_i=0\ \text{for $i\neq 1$ and $q_1\in Q_k$}.
\}
\]
Since $Q_k$ is a compact subset of the pseudo-interior of $Q$,
it is a $Z$-set in $Q$.
Thus $N_k$ is a $Z$-set in $Q^\o$ and hence $M_k=V\cap N_k$ is a $Z$-set in $V$. 
Finally, $\Si=\bigcup_{k\in\o} Q_k$ implies $\Si^\o\subset\bigcup_{k\in\o} M_k$.

(2) $\B_p$ is homotopy dense in $\K_s$ by Lemma~\ref{lem: regularization}, 
and  contained in a $\s Z$ subset of $\K_s$ by Lemma~\ref{lem: sZ}. 
\end{proof}

The following uniqueness theorem is immediate 
from~\cite[Theorem 1.7.7]{BRZ-book}.

\begin{lem}
\label{lem: uniq absorb}
For $i\in\{1,2\}$ let $(M_i, X_i)$ be a $(\M_0, \M_2)$-absorbing
pair and $B_i\subset M_i$ be a closed subset.
Then for any homeomorphism $f\co M_1\to M_2$ with $f(B_1)=B_2$ 
and $f(B_1\cap X_1)=B_2\cap X_2$ there exists 
a homeomorphism $h\co M_1\to M_2$ such that $h(X_1)=X_2$ and 
$h=f$ on $B_1$.  
\end{lem}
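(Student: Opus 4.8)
\textbf{Proof proposal for Lemma~\ref{lem: uniq absorb}.}

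The plan is to reduce the statement directly to the uniqueness theorem for $(\M_0,\M_2)$-absorbing pairs as stated in~\cite[Theorem 1.7.7]{BRZ-book}. That theorem asserts that if $(M_1,X_1)$ and $(M_2,X_2)$ are $(\M_0,\M_2)$-absorbing pairs, then any homeomorphism $f\co M_1\to M_2$ restricting to a homeomorphism $X_1\to X_2$ is \emph{approximable} by a homeomorphism $h\co (M_1,X_1)\to(M_2,X_2)$; more precisely, the relative version of that theorem handles the situation in which $f$ already maps a designated closed subpair correctly, and one is then allowed to keep $f$ fixed on that subpair while correcting it elsewhere. So the first step is simply to check that the hypotheses of~\cite[Theorem 1.7.7]{BRZ-book} are met: each $(M_i,X_i)$ is $(\M_0,\M_2)$-absorbing by assumption, and $B_i$ is a closed subset of $M_i$ on which $f$ restricts to a homeomorphism of pairs $(B_1,B_1\cap X_1)\to(B_2,B_2\cap X_2)$, again by assumption.

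The second step is to verify that the hypothesis of~\cite[Theorem 1.7.7]{BRZ-book} concerning the closed subsets $B_i$ is satisfied in the form the theorem requires. The cleanest route is to observe that, since $M_i$ is (by the definition of an $(\M_0,\M_2)$-absorbing pair, which forces $M_i$ to be a $Q$-manifold, indeed $M_i\cong Q$ up to the standard local model) an absolute retract in which every compact set, in particular every closed subset of a compactum, is a $Z$-set whenever it has the appropriate position, one can first arrange $B_i$ to be a $Z$-set, or simply invoke the version of~\cite[Theorem 1.7.7]{BRZ-book} stated for an arbitrary closed $B_i$. In our applications $B_i$ will in fact be a $Z$-set (it arises as a face of $Q$ or a similar tame set), so no subtlety is needed; but the statement of the lemma is phrased to match exactly the ``rel closed set'' clause of~\cite[Theorem 1.7.7]{BRZ-book}, and citing that clause is all that is required.

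The third and final step is bookkeeping: given the homeomorphism $h$ produced by~\cite[Theorem 1.7.7]{BRZ-book}, record that $h(X_1)=X_2$ and that $h=f$ on $B_1$, which are precisely the two conclusions listed in the lemma. There is no genuine content to add beyond quoting the cited theorem; the lemma is stated separately only because the exact combination of ``homeomorphism of pairs'' together with ``agrees with a prescribed map on a closed subset'' is the form needed in Section~\ref{sec: pairs}, and it is convenient to have it packaged.

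\textbf{Main obstacle.} There is essentially no obstacle: the entire difficulty is bundled into~\cite[Theorem 1.7.7]{BRZ-book}, which we are entitled to use. The only thing to be careful about is making sure that the two absorbing pairs in each application really satisfy the definition given above (in particular that the compact exhaustion $(K_l)$ with $K_l\cap X\in\M_2$ and $\bigcup K_l\supset X$ a $\s Z$-set exists), but that is exactly what Lemma~\ref{lem: absorb pairs ex} has already established for the pairs of interest, so invoking it here is immediate.
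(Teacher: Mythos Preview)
Your approach matches the paper's exactly: the paper gives no proof for this lemma beyond the sentence ``immediate from~\cite[Theorem 1.7.7]{BRZ-book},'' and your proposal is likewise a direct citation of that result. Your added commentary is unnecessary and in places inaccurate (e.g., $(\M_0,\M_2)$-absorbing does not force $M_i\cong Q$, and the discussion of whether $B_i$ must be a $Z$-set is beside the point since the cited theorem is stated for arbitrary closed $B_i$), so you should simply drop paragraphs two and three and keep only the one-line citation.
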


\begin{proof}[Proof of Theorem~\ref{thm: intro hyperspace}]
Recall that $\K_s$ is homeomorphic to $Q^\o\setminus \{*\}$
and $\K_s^{0\le 1}$ is a $Z$-set in $\K_s$ that is disjoint from
$\B_p$. 
Fix an arbitrary $Z$-embedding $\K_s^{0\le 1}\to Q^\o\setminus \{*\}$ 
whose image $B$ is disjoint from $\Si^\o$, e.g., we can fix a factor
in the product $Q^\o$ and pick $B$ in the pseudo-boundary of
that factor. The unknotting of $Z$-sets in $Q$-manifolds~\cite[Theorem 1.1.25]{BRZ-book}
and Lemma~\ref{lem: uniq absorb}
give a homeomorphism $\K_s\to Q^\o\setminus \{*\}$
taking $\K_s^{0\le 1}$ to $B$, and $\B_p$ to $\Si^\o$. 
\end{proof}

\begin{rmk}
\label{rmk: (Q, si-to-o manifolds}
A {\em $(Q^{\,\o}, \Si^\o)$-manifold\,} is a pair $(M, X)$
such that any point of $M$ has a neighborhood $U$ that admits an 
open embedding $h\co U\to Q^{\,\o}$ with $h(U\cap X)=h(U)\cap\Si^\o$.
Lemmas~\ref{lem: pair absorb}--\ref{lem: uniq absorb}
imply that any $(M, X)$ as in Lemma~\ref{lem: pair absorb}
is a $(Q^{\,\o}, \Si^\o)$-manifold.
In fact, a pair $(M, X)$ is a $(Q^{\,\o}, \Si^\o)$-manifold if and only if 
$M$ is a $Q$-manifold and $(M, X)$ is  $(\M_0, \M_2)$-absorbing,
see~\cite[Exercise 12 of Section 1.7]{BRZ-book}.
\end{rmk}

\section{Quotients of hyperspaces}
\label{sec: quotients}

In this section we prove Theorem~\ref{thm: intro quotient} and related results.

\begin{lem}
\label{lem: X/H}
 Let $H\le O(n)$ be a closed subgroup,
$X$ be an $H$-invariant subspace of $\K_s$ that contains $\B_p$,
and $X/H$ be the quotient space. Then
\begin{enumerate}
\vspace{-2pt}
\item[\textup{(1)}]
$X/H$ is a separable metrizable AR.\vspace{2pt}
\item[\textup{(2)}]
$X$ is Polish if and only if so is $X/H$.\vspace{2pt}
\item[\textup{(3)}]
$X$ is locally compact if and only if so is $X/H$. 
\vspace{2pt}
\item[\textup{(4)}] 
If $X=\K_s^{l\le n}$ for some $l\ge 0$, then $X/H$ is 
Polish and locally compact.\vspace{2pt}
\item[\textup{(5)}] 
If $X$ is homeomorphic to $\Si^\o$, then \vspace{2pt}
\begin{enumerate}
\item[\textup{(5a)}] 
any $\s$-compact subset of $X/H$ has empty interior,\vspace{2pt}
\item[\textup{(5b)}] 
$X/H$ is neither Polish nor locally compact.  
\end{enumerate} \end{enumerate} 
\end{lem}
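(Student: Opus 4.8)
The plan is to exploit throughout the single structural fact that, since $H\le O(n)$ is closed and hence compact, the orbit map $\pi\co X\to X/H$ is a perfect open surjection: orbit maps are always open, the fibres $Hx$ are compact, and $\pi$ is closed because $H$ is compact ($\pi^{-1}(\pi(C))=HC$ is closed whenever $C$ is). Hence $\pi$ preserves separability and metrizability, and, being perfect, it preserves $\s$-compactness, local compactness, and Čech-completeness (equivalently, for separable metrizable spaces, complete metrizability) in both directions. In particular (2) and (3) are immediate, and they also reduce (4) to a statement about $\K_s^{l\le n}$ itself: topological dimension is lower semicontinuous on $\K(\R^n)$ under Hausdorff convergence (if $K_j\to K$ with $\dim K=d$, approximate $d+1$ affinely independent points of $K$ by points of $K_j$ and note that affine independence persists for large $j$), so $\K_s^{0\le l-1}$ is closed in $\K_s$ and $\K_s^{l\le n}=\K_s\setminus\K_s^{0\le l-1}$ is open in $\K_s$ (it is all of $\K_s$ when $l=0$); since $\K_s$ is homeomorphic to a once-punctured Hilbert cube \cite{NQS} it is locally compact and Polish, hence so is its open subset $\K_s^{l\le n}$, and then (2)--(3) yield (4).

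For (1) I would argue as follows. The space $X/H$ is separable (a continuous image of $X\subset\K_s$) and metrizable ($\pi$ is perfect, or use the invariant metric $\bar d(Hx,Hy)=\min_{h\in H}d(x,hy)$). By Lemma~\ref{lem: between set is homotopy dense AR} the space $X$ is an AR, so by Antonyan's theory of orbit spaces of compact group actions \cite{Ant-ANE-G-conv, Ant-orbit-sp-ANR} the quotient $X/H$ is an ANR. Finally $X/H$ is contractible: compose Schneider's regularization $\r$ of Lemma~\ref{lem: regularization}, which pushes $X$ into the \emph{convex} set $\B_p\subset X$, with the Minkowski straight-line homotopy inside $\B_p$ to the $O(n)$-fixed body $\mathbb B^n$ (this stays in $\B_p$ because $\sf(\B_p)$ is convex); the resulting $O(n)$-equivariant contraction of $X$ descends through $\pi$ to a contraction of $X/H$. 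A contractible metrizable ANR is an AR, proving (1). I expect this to be the main obstacle: the convex geometry needed here was already set up in the preceding lemmas, and the real point is to invoke the equivariant infinite-dimensional machinery correctly — the ANR-ness of orbit spaces together with the equivariance of the regularization.

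For (5a), assume $X\cong\Si^\o$ and let $A\subset X/H$ be $\s$-compact; then $\pi^{-1}(A)\subset X$ is $\s$-compact, so it is enough to show that a $\s$-compact subset of $\Si^\o$ has empty interior. Any nonempty open $U\subset\Si^\o$ is, by definition, a $\Si^\o$-manifold, hence $\M_2$-absorbing, hence $\M_2$-universal; in particular $U$ contains a closed copy of a non-$\s$-compact Polish space (the space of irrationals lies in $\M_2$, since $G_\de\subset F_{\s\de}$), so $U$ is not $\s$-compact. A $\s$-compact subset of $\Si^\o$ with nonempty interior would contain such a $U$ — an open subset of a metrizable space is $F_\s$, and a subset closed in $\Si^\o$ and contained in a $\s$-compact set is $\s$-compact — which is absurd. (Taking $X=\B_p$ and $H=O(n)$ then recovers part (3) of Theorem~\ref{thm: intro quotient}.)

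For (5b), by the definition of an $\M_2$-absorbing space $\Si^\o$ is a countable union of $Z$-sets, and $Z$-sets are closed with empty interior, so $\Si^\o$ is meager in itself and therefore not completely metrizable by the Baire category theorem, i.e.\ not Polish; moreover a separable metrizable locally compact space is Polish (being open in its metrizable one-point compactification), so $\Si^\o$ is not locally compact either. Since $X\cong\Si^\o$, parts (2) and (3) now give that $X/H$ is neither Polish nor locally compact, completing the proof.
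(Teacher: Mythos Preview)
Your proofs of (2)--(4) and (5b) are essentially the paper's. There are two points worth flagging.

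\textbf{A gap in (1).} You write ``$X$ is an AR, so by Antonyan's theory the quotient $X/H$ is an ANR.'' Antonyan's orbit-space theorems take as input an \emph{equivariant} A(N)R, not merely an A(N)R that happens to carry an $H$-action; you have not shown $X$ is an $H$-ANR. The paper avoids this by working one level up: it uses Antonyan's $H$-convex structure on $\K$ together with the fixed point $\mathbb B^n$ to get that $\K_s$ is an $H$-AE, hence an $H$-AR, hence $\K_s/H$ is an AR; then the $O(n)$-equivariant Schneider regularization makes $X/H$ homotopy dense in the AR $\K_s/H$, so $X/H$ is an AR. All the ingredients you need are already in your write-up (you even exhibit the equivariant regularization), but you deploy them only to prove contractibility of $X/H$, not to supply the equivariant ANR input that Antonyan requires. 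Either reroute through $\K_s/H$ as the paper does, or explicitly argue that $X$ is an $H$-AR (e.g.\ via Jaworowski's criterion, checking that each fixed-point set $X^K$ is an AR because it sits between $\B_p^K$ and $\K_s^K$ and the equivariant regularization preserves $K$-fixed points).

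\textbf{A different route in (5a).} Your argument is correct and genuinely different from the paper's. You use that every nonempty open $U\subset\Si^\o$ is $\M_2$-universal and hence contains a closed copy of the (non-$\s$-compact) irrationals, so $U$ cannot be $\s$-compact; the $F_\s$ bookkeeping you give then rules out any $\s$-compact set with interior. The paper instead uses the topological homogeneity of $\Si^\o$: a $\s$-compact set with nonempty interior would, after moving by countably many self-homeomorphisms, cover $\Si^\o$ and force $\Si^\o$ itself to be $\s$-compact, contradicting the elementary fact that an infinite product of noncompact $\s$-compact spaces is never $\s$-compact. Your approach trades the product/homogeneity argument for the absorbing-space machinery already on the table; either works.
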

\begin{proof}
(1) 
Convex hulls of finite sets with rational coordinates form a
dense countable subset of $\K_s$. 
Separability and metrizability of a space is inherited by its subsets and
$H$-quotients~\cite[Proposition 1.1.12]{Pal-mem} so that $X/H$
enjoys these properties.

Antonyan~\cite[Theorem 4.5]{Ant-ANE-G-conv} showed that $\K$ has 
what he called an $H$-convex structure. The structure is inherited by
any convex $H$-invariant subset, such as $\K_s$. 
The unit ball $\mathbb B^n$ is a fixed point for
the $H$-action on  $\K_s$, hence~\cite[Theorem 3.3]{Ant-ANE-G-conv}
shows that $\K_s$ is a $H$-AE. 
Hence $\K_s$ is a $H$-AR
because the identity map of any closed $H$-invariant subset extends to an $H$-retraction.
Finally~\cite[Theorem 1.1]{Ant-orbit-sp-ANR} implies that 
$\K_s/H$ is an AR.

Lemma~\ref{lem: regularization} shows
that $X/H$ is a homotopy dense subset $\K_s/H$, an AR,
which makes $X/H$ an AR, see~\cite[Exercise 12 in Section 1.3]{BRZ-book}.

(2) The orbit map $X\to X/H$ is a closed continuous surjection with compact preimages.
For any such map the domain is Polish if and only if so is the co-domain, see 
the references mentioned before Theorem 4.3.27 of~\cite{Eng-gen-top}.
%

(3) The orbit map is proper and open, so the image and the preimage of a compact neighborhood
is a compact neighborhood.

(4) 
$\K_s^{l\le n}$ are homeomorphic to open subsets of $\K_s$
which is a $Q$-manifold, and hence so is $\K_s^{l\le n}$. 
Any $Q$-manifold is Polish and locally compact so (2)-(3) applies.

(5a) If $X/H$ contains a $\s$-compact subset with nonempty interior
then so does $X$ because the orbit map $X\to X/H$ is proper and continuous.
If $V$ is an open set in the interior of a $\s$-compact subset of $\Si^\o$,
then separability of $\Si^\o$ implies that it is covered by countably many translates of $V$
and hence $\Si^\o$ is $\s$-compact. But the product of infinitely many
$\s$-compact noncompact spaces is never $\s$-compact.

(5b) That $X/H$ is not Polish follows from (2) and the remark that $\Si^\o$
is $\s Z$ and hence not Polish by the Baire category theorem because any $Z$-set is nowhere dense.
Also any locally compact (separable) space is $\s$-compact so we are done by (5a).
\end{proof}

Given $X\subset\K_s$ let $\accentset{\circ} X$ denotes the set 
of points of $X$ whose isotropy subgroup
in $O(n)$ is trivial. If $X$ is also $O(n)$-invariant,
$\bm{X}$ denotes the orbit space $X/O(n)$.  
By the slice theorem  
$\accentset{\circ} X$ is open in $X$~\cite[Corollary II.5.5]{Bre-book}
and the orbit map $\accentset{\circ} X\to \accentset{\circ}{\!\bm{X}}$
is a locally trivial principal $O(n)$-bundle~\cite[Corollary II.5.8]{Bre-book}.
The classifying space for such bundles is $BO(n)$, 
the Grassmanian of $n$-planes in $\R^\o$.

\begin{lem}
\label{lem: hom dense trivial isometry}
If $\accentset{\circ}\B_p\subset X\subset\K_s$, then $X$ is homotopy dense in $\K_s$. 
\end{lem}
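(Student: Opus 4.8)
The plan is to show that every continuous map $f\colon Q\to\K_s$ can be uniformly approximated by a map into $\accentset{\circ}\B_p$ (equivalently into $X$); since $\K_s$ is an ANR, this is precisely what homotopy density means, by \cite[Theorem 1.2.2]{BRZ-book}. The approximation will be built in two stages: first push $f$ into $\B_p$ by Schneider's regularization, then perturb the resulting map so that every body in its image has trivial isotropy in $O(n)$, all while staying in $\B_p$ and staying close to $f$.

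First I would apply Lemma~\ref{lem: regularization}: the map $\r(\cdot,t)$ for small $t>0$ takes $\K_s$ into $\B_p$ and, since $\r(\cdot,0)=\mathrm{id}$, the composition $q\mapsto \r(f(q),t)$ is as close to $f$ as we like by continuity. So it suffices to approximate maps $g\colon Q\to\B_p$ by maps into $\accentset{\circ}\B_p$. The key geometric input is that one can ``destroy the symmetry'' of a smooth positively curved convex body by an arbitrarily small perturbation that keeps it in $\B_p$. Concretely, I would fix a convex body $P\in\B_p$ with trivial isotropy (e.g.\ one with three distinct principal ``axis lengths'' arranged asymmetrically), translate it to have Steiner point at $o$, and consider the Minkowski combination $D\mapsto (1-\e)D+\e P$; this lies in $\B_p$ because $\B_p$ is convex under Minkowski addition (the class $C^\infty_+$ is convex, as recalled after the curvature lemma), and it is within Hausdorff distance $O(\e)$ of $D$. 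The point is that for generic small $\e$ this new body has trivial isotropy. To make this precise one can argue that the set of $\e\in(0,1]$ for which $(1-\e)D+\e P$ has nontrivial isotropy is small: if $\g\in O(n)\setminus\{\mathrm{id}\}$ fixes $(1-\e)D+\e P$ for two distinct values $\e_1,\e_2$, then since $\g$ also commutes with Minkowski addition and fixes the Steiner point, linearity in $\e$ of the support function forces $\g$ to fix both $D$ and $P$, contradicting triviality of the isotropy of $P$. Hence for each $D$ at most finitely many ``bad'' subgroups can occur, each over a single value of $\e$, so all but countably many $\e$ work for a fixed $D$; a compactness argument over the image $g(Q)$ (using that isotropy is upper semicontinuous and $O(n)$ is compact) then yields a single small $\e$ that works simultaneously for every $D\in g(Q)$.

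The main obstacle is this last uniformity step: passing from ``for each $D$, generic $\e$ is good'' to ``one $\e$ is good for all $D$ in a compact family.'' The clean way is to use that the function $D\mapsto \dim(\text{isotropy of }D)$ on $\K_s$ is upper semicontinuous, so $\{D : \text{isotropy trivial}\}=\accentset{\circ}\K_s$ is open (this is just the slice theorem already quoted). Then one shows the map $\Phi\colon \B_p\times(0,1]\to\B_p$, $\Phi(D,\e)=(1-\e)D+\e P$, has the property that $\Phi(g(Q)\times\{\e\})\subset\accentset{\circ}\B_p$ for a dense set of $\e$ near $0$: indeed $\Phi^{-1}(\B_p\setminus\accentset{\circ}\B_p)\cap(g(Q)\times(0,1])$ is a closed subset each of whose vertical slices $\{D\}\times(\cdot)$ is finite (by the linearity argument above), hence by a Baire/Fubini-type argument its projection to the $\e$-axis has empty interior. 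Choosing $\e$ in the complement and composing with $g$ gives the desired approximation into $\accentset{\circ}\B_p\subset X$. Combining the two stages, $f$ is uniformly approximated by maps into $X$, so $X$ is homotopy dense in $\K_s$.
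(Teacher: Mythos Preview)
Your two-stage plan is natural, but the second stage has a real gap. The linearity argument is correct as far as it goes: if a single $\gamma\neq\mathrm{id}$ fixes $(1-\e)D+\e P$ for two distinct values of $\e$, then $\gamma$ must fix both $D$ and $P$. However this only shows that each individual element of $O(n)\setminus\{\mathrm{id}\}$ contributes at most one bad~$\e$; it does not show that $B_D=\{\e:(1-\e)D+\e P\notin\accentset{\circ}\B_p\}$ is finite or even countable, because $O(n)$ has uncountably many nontrivial closed subgroups that pairwise intersect trivially (e.g.\ the reflection subgroups in $O(2)$). So your claim that ``at most finitely many bad subgroups can occur'' is unjustified.

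More seriously, even granting finiteness of each vertical slice, the ``Baire/Fubini-type'' step fails outright. A closed subset of $g(Q)\times(0,1]$ whose vertical slices are all singletons can still project onto the entire $\e$-axis: take $g(Q)=[0,1]$ and $\mathcal B=\{(t,t):t\in(0,1]\}$. Neither Kuratowski--Ulam nor any Fubini-type theorem controls the projection of a closed set from information about its fibers in the \emph{other} direction. So there is no reason a single $\e$ should work simultaneously for every $D\in g(Q)$, and that uniformity is exactly what you need.

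The paper sidesteps this by an explicit construction that destroys symmetry uniformly over the compact family. Starting from $h\colon Q\to\B_p$, for each $D$ one locates the points $e_i(D)\in\partial D$ with outward normals the standard basis vectors $e_1,\dots,e_n$ (these vary continuously with $D$ since $D\in\B_p$), then performs $(\e_i,e_i)$-truncations in succession, choosing the depths $\e_i$ so that the resulting $n$ flat faces have strictly decreasing diameters. Compactness of $h(Q)$ allows the $\e_i$ to be fixed once for all $D$. Any $\phi\in O(n)$ preserving the truncated body must permute the flat faces, hence fix each one (their diameters are distinct), hence fix each $e_i$, so $\phi=\mathrm{id}$. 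The truncated bodies lie in $\accentset{\circ}\K_s$ but not in $\B_p$; a final Schneider regularization pushes the map back into $\B_p$, and since $\accentset{\circ}\K_s$ is open the image lands in $\accentset{\circ}\B_p$.
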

\begin{proof}
It suffices to show that 
any map $h\co Q\to\K_s$ can be approximated 
by a map with image in $\accentset{\circ}\B_p$. 
Since $\B_p$ is homotopy dense in $\K_s$ we can assume that
$h(Q)\subset\B_p$.

Let $\{e_i\}_{1\le i\le n}$ be the standard basis in $\R^n$.
For $D\in\B_p$ let $e_i(D)$ be the unique point of $\d D$ 
with outward normal vector $e_i$. That $D\in\B_p$
ensures continuity of the map $D\to e_i(D)$.

Pick $\e_1>0$ so small that if $D(\e_1)$ is the 
result of the $(\e_1, e_1)$-truncating of $D\in h(Q)$,
then 
\[D\setminus D(\e_1)\qquad \text{and}\qquad \bigcup_{1<l\le n}\{e_l(D)\}\]
have disjoint closures.
The $(\e_1, e_1)$-truncating turns $D$ into a convex body $D(\e_1)$
whose flat face $F_1(D)$ has normal vector $e_1$.
Continuing inductively pick $\e_k>0$ so that if $D(\e_k)$ is the 
result of the $(\e_k, e_k)$-truncating of $D(\e_{k-1})$
then the three sets 
\[
D(\e_{k-1})\setminus D(\e_k)\qquad \quad
\bigcup_{1\le j<k} F_j(D)\quad\qquad \bigcup_{k<l\le n}\{e_l(D)\}\]
have disjoint closures,
and the diameter of the newly formed flat face $F_k(D)$ of $D(\e_k)$ is smaller
than the minimum over $D\in h(Q)$ of the diameters of $F_{k-1}(D)$.

Compactness of $h(Q)$ implies that any small enough $\e_k$
works for all $D\in h(Q)$ at once.  Set 
$\e=(\e_1,\dots, \e_n)$. 
Let $h_\e\co Q\to\K_s$ be the composition of $h$ with the map $D\to D(\e_n)$, where $D\in h(Q)$.
Note that $h_\e$ converges to $h$ as $\e\to o$. Any $\phi\in O(n)$ that preserves $D(\e_n)$
must take faces to faces, and since the faces all
have different diameters $\phi$ must preserve each face
and hence its normal vector, so that $\phi$ is the identity.
Thus the image of $h_\e$ lies in $\accentset{\circ}\K_s$ which is an open subset of $\K_s$.
Finally, using homotopy density of $\B_p$ we can approximate $h_\e$ by a map with image
in $\B_p\cap\accentset{\circ}\K_s=\accentset{\circ}\B_p$.
\end{proof}

\begin{lem}
\label{lem: BO(n)}
If $\accentset{\circ}\B_p\subset X\subset\K_s$ and $\accentset{\circ} X$ is
$O(n)$-invariant, then
$\accentset{\circ} X$ is contractible and $\accentset{\circ}{\!\bm{X}}$
is homotopy equivalent to $BO(n)$.
\end{lem}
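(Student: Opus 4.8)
The strategy is to combine the geometric contractibility argument already set up in Lemma~\ref{lem: hom dense trivial isometry} with the general bundle-theoretic machinery recalled just above the statement. First I would observe that $\accentset{\circ} X$ is open in $X$ by the slice theorem, and that by Lemma~\ref{lem: hom dense trivial isometry} the set $\accentset{\circ}\B_p$ is homotopy dense in $\K_s$; hence so is $X$, and in fact the proof of that lemma produces, for any $h\co Q\to\K_s$, an approximating map with image in $\accentset{\circ}\B_p\subset\accentset{\circ} X$. This is exactly the hypothesis needed to conclude that $\accentset{\circ} X$ is homotopy dense in $\K_s$ as well: any map $Q\to\accentset{\circ} X$ is in particular a map $Q\to\K_s$, and the truncation-plus-regularization procedure keeps its image inside $\accentset{\circ}\B_p$, so it never leaves $\accentset{\circ} X$. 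Since $\K_s$ is an AR (it is a once-punctured Hilbert cube), any homotopy dense subset of it is an AR and is weakly contractible; because $\accentset{\circ} X$ is an ANR, weak contractibility plus the homotopy extension property gives genuine contractibility. Alternatively, and perhaps more cleanly, one can contract $\accentset{\circ} X$ directly: the homotopy density argument yields a homotopy $\K_s\times\II\to\K_s$ pushing everything into $\accentset{\circ}\B_p$, which one then composes with a contraction of the convex set $\sf(\K_s)$ (say a straight-line homotopy to the support function of a fixed $D_0\in\accentset{\circ}\B_p$), checking that along the way one stays in $\accentset{\circ}\B_p$ — this holds because $\B_p$ is convex under Minkowski addition and a convex combination $tD_0+(1-t)D$ of a body with trivial isotropy has trivial isotropy (any $\phi\in O(n)$ fixing the combination and $D_0$, after canceling $tD_0$, fixes $D$).

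\textbf{From contractibility of the total space to the homotopy type of the base.} Once $\accentset{\circ} X$ is known to be contractible, I invoke the fact recalled before the statement: the orbit map $\accentset{\circ} X\to\accentset{\circ}{\!\bm{X}}$ is a locally trivial principal $O(n)$-bundle. A principal $G$-bundle with contractible total space over a paracompact base is a universal $G$-bundle, hence classifies $G$-bundles and its base is a classifying space $BG$; for $G=O(n)$ a model of $BO(n)$ is the Grassmannian of $n$-planes in $\R^\o$. More precisely, the classifying map $\accentset{\circ}{\!\bm{X}}\to BO(n)$ of the bundle induces isomorphisms on all homotopy groups (from the long exact sequences of the two bundles, comparing with $EO(n)\to BO(n)$, using that both total spaces are weakly contractible), and since both $\accentset{\circ}{\!\bm{X}}$ and $BO(n)$ have the homotopy type of CW complexes — $\accentset{\circ}{\!\bm{X}}$ because it is an ANR, indeed by Lemma~\ref{lem: prod with [0,1)}-type reasoning it is an open subset of a $Q$-manifold quotient — Whitehead's theorem upgrades this to a homotopy equivalence.

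\textbf{Main obstacle.} The substantive point is verifying that the truncation homotopy genuinely keeps us inside $\accentset{\circ} X$ (equivalently, inside $\accentset{\circ}\B_p$) rather than merely inside $\B_p$; this is where one must be careful, because $X$ is an arbitrary $O(n)$-invariant set sandwiched between $\accentset{\circ}\B_p$ and $\K_s$, and $\accentset{\circ} X$ need not equal $X\cap\accentset{\circ}{\K}_s$ unless $\accentset{\circ} X$ is assumed $O(n)$-invariant — which is precisely the hypothesis in the statement, so this is consistent. Concretely, in the contraction one wants every intermediate body to have trivial $O(n)$-isotropy; the asymmetric-face construction in Lemma~\ref{lem: hom dense trivial isometry} does this for the ``push into $\accentset{\circ}\B_p$'' phase, and Minkowski-convexity of $\accentset{\circ}\B_p$ handles the straight-line phase, but one must note that all of $h(Q)$ is handled with a single choice of truncation parameters by compactness, so the resulting homotopy is continuous. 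The only other mild point is the appeal to ``total space contractible $\Rightarrow$ base is $BO(n)$'': over a general paracompact (here, separable metrizable ANR) base this is standard, but it is worth stating that $\accentset{\circ}{\!\bm{X}}$ is paracompact and has CW homotopy type so that Whitehead applies.
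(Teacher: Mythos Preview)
Your main line is correct and coincides with the paper's argument: since $\accentset{\circ}\B_p\subset\accentset{\circ} X\subset\K_s$, Lemma~\ref{lem: hom dense trivial isometry} applies directly to $\accentset{\circ} X$ and gives that $\accentset{\circ} X$ is homotopy dense in the AR $\K_s$, hence contractible; the principal $O(n)$-bundle $\accentset{\circ} X\to\accentset{\circ}{\!\bm{X}}$ then forces $\accentset{\circ}{\!\bm{X}}\simeq BO(n)$. The paper just invokes the lemma in one line rather than re-running its proof, but the substance is identical.

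Your alternative ``straight-line'' contraction, however, has a gap. You assert that $tD_0+(1-t)D$ has trivial isotropy whenever $D\in\accentset{\circ}\B_p$, justified by ``any $\phi\in O(n)$ fixing the combination and $D_0$, after cancelling $tD_0$, fixes $D$.'' But to prove trivial isotropy of the combination you may only assume $\phi$ fixes the combination, not $D_0$; and from $t\phi(D_0)+(1-t)\phi(D)=tD_0+(1-t)D$ alone you cannot cancel, because Minkowski sums do not decompose uniquely into summands. Since $D_0$ has \emph{trivial} isotropy, a nontrivial $\phi$ certainly moves $D_0$, so there is no reason the equation forces $\phi=\id$. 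This alternative route, as written, does not go through; the homotopy-density argument you gave first avoids the issue entirely and is what the paper uses.
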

\begin{proof}
 Lemma~\ref{lem: hom dense trivial isometry} gives a homotopy equivalence of 
$\accentset{\circ} X$ and $\K_s$, so that
$\accentset{\circ} X$ is contractible.
Hence $\accentset{\circ}{\!\bm{X}}$ is the classifying space for the principal
$O(n)$-bundles, and thus it is homotopy equivalent to $BO(n)$, see~\cite[Section 7]{Dol-partition}. 
\end{proof}

\begin{rmk}
In Lemma~\ref{lem: BO(n)} 
if $X$ is homeomorphic to $\Si^\o$, then so is $\accentset{\circ} X$
because $\accentset{\circ} X$ is open in $X$ and
homotopy equivalent $\Si^\o$-manifolds are homeomorphic. Similarly, if $X$ is $Q$-manifold,
then so is $\accentset{\circ} X$, even though they might be non-homeomorphic. 
\end{rmk}

\begin{thm} 
\label{thm: quotient sigma-to-omega}
Suppose $\accentset{\circ}\B_p\subset X\subset\B^{1,\a}$ with $\a>0$ and 
$\accentset{\circ} X$ is $O(n)$-invariant. 
If $\accentset{\circ} X$ is a $\Si^\o$-manifold, then so is $\accentset{\circ}{\!\bm{X}}$. 
\end{thm}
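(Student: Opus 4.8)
The plan is to promote the fact that $\accentset{\circ} X$ is a $\Si^\o$-manifold to the orbit space $\accentset{\circ}{\!\bm X}$ by exploiting the principal $O(n)$-bundle structure of the orbit map $p\co \accentset{\circ} X\to\accentset{\circ}{\!\bm X}$, which is local triviality from the Slice Theorem. Since being a $\Si^\o$-manifold is a local property, it suffices to show that each point of $\accentset{\circ}{\!\bm X}$ has a neighborhood homeomorphic to an open subset of $\Si^\o$. Over a trivializing neighborhood $U$ of such a point the bundle is $U\times O(n)$, so $p^{-1}(U)\cong U\times O(n)$ is an open subset of $\accentset{\circ} X$ and hence a $\Si^\o$-manifold. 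The task therefore reduces to the following: if $U\times O(n)$ is a $\Si^\o$-manifold, then $U$ is a $\Si^\o$-manifold. Since $O(n)$ is a compact manifold, this is an instance of the general principle that a factor of a product $\Si^\o$-manifold, where the other factor is a (finite-dimensional, locally compact, locally contractible) manifold, is again a $\Si^\o$-manifold.

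To make this reduction rigorous I would argue as follows. First, $\accentset{\circ}{\!\bm X}$ is an ANR: by Lemma~\ref{lem: X/H}(1) applied with $H=O(n)$ and $X$ replaced by any $O(n)$-invariant subspace between $\B_p$ and $\K_s$ — more directly, $\accentset{\circ}{\!\bm X}$ is locally the base of a principal $O(n)$-bundle with $\Si^\o$-manifold total space, and since $\Si^\o$-manifolds are ANRs and $O(n)$ is a compact Lie group, the orbit space of an ANR by a compact Lie group is an ANR by~\cite{Ant-orbit-sp-ANR}. Second, $\accentset{\circ}{\!\bm X}$ is homotopy equivalent to $BO(n)$ by Lemma~\ref{lem: BO(n)} (using $\accentset{\circ}\B_p\subset X$), so $\accentset{\circ}{\!\bm X}$ has the homotopy type of a locally finite simplicial complex — in particular of a CW complex. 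By the characterization recalled in Section~\ref{sec: inf dim top}, a space is a $\Si^\o$-manifold if and only if it is $\M_2$-absorbing, equivalently an ANR that is strongly $\M_2$-universal, $\s Z$, in $\M_2$, and with SDAP. So I would check these five conditions for $\accentset{\circ}{\!\bm X}$, transferring each from $\accentset{\circ} X$ through the orbit map and/or through a local trivialization.

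Concretely: SDAP for $\accentset{\circ}{\!\bm X}$ follows because SDAP is a local property for ANRs and locally $\accentset{\circ}{\!\bm X}$ is a retract (indeed a factor) of $\accentset{\circ} X$; more robustly, using that the orbit map is a locally trivial bundle with fiber a compact manifold, a map $Q\times\o\to U$ can be lifted, pushed to a discrete family in $p^{-1}(U)$ by SDAP there, and pushed back down, local finiteness being preserved since $p$ is proper and open. Being $\s Z$ is the same kind of argument using $p$ proper: a $\s Z$-decomposition of $\accentset{\circ} X$ pushes forward, since the continuous proper image of a $Z$-set is a $Z$-set when the map is also open (a map into $\accentset{\circ}{\!\bm X}$ lifts near its image, is pushed off the preimage $Z$-set, and pushed down). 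Membership in $\M_2$ is a countable-union-and-closed-hereditary property and transfers along the proper map $p$ since preimages of $F_{\s\de}$ sets are $F_{\s\de}$ and, conversely, $\accentset{\circ}{\!\bm X}$ embeds in the Polish space $\accentset{\circ} X/O(n)$-completion; alternatively one writes $\accentset{\circ}{\!\bm X}$ as the image of a locally closed $\M_2$-set under the perfect map $p$, and perfect images of $\M_2$-spaces are $\M_2$. Strong $\M_2$-universality: apply the Enlarging Theorem~\cite[3.1.5]{BRZ-book} — it suffices that $\accentset{\circ}{\!\bm X}$ is an ANR with SDAP containing a homotopy dense $G_\de$ strongly $\M_2$-universal subset; one takes the image of $\accentset{\circ}\B_p$ (or of a suitable dense $G_\de$ orbit-subspace) under $p$, whose strong $\M_2$-universality again comes from Lemma~\ref{lem: B_p is in M2-universal} plus the bundle structure. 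The main obstacle is precisely this last point: strong $\M_2$-universality is not a purely local property, and the ``$\M_2$-universal closed subset'' of the total space need not descend to a closed subset of the orbit space because the $O(n)$-action may move it; the fix is to replace it by its $O(n)$-invariant hull $\bigcap_{g\in O(n)} g\cdot(\text{that set})$, which is still closed, still $\M_2$-universal (as it contains an invariant copy — here one uses that the universal subset constructed in Lemma~\ref{lem: B_p is in M2-universal} can be chosen inside the principal orbit and arranged, by the rescaling/truncating techniques of Section~\ref{sec: conv geom}, to be $O(n)$-invariant, cf. Remark~\ref{rmk: invariant Z-set}), and descends to a closed $\M_2$-universal subset of $\accentset{\circ}{\!\bm X}$, to which the Enlarging Theorem then applies. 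Once all five properties hold, $\accentset{\circ}{\!\bm X}$ is $\M_2$-absorbing, hence a $\Si^\o$-manifold, completing the proof.
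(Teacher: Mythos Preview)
Your overall architecture matches the paper's: use the Slice Theorem to get a principal $O(n)$-bundle, work over a trivializing neighborhood $U$ so that $U\times O(n)$ is a $\Si^\o$-manifold, and then verify that $U$ itself is $\M_2$-absorbing. The $\s Z$ step is also essentially the paper's: one needs $O(n)$-\emph{invariant} $Z$-sets (Remark~\ref{rmk: invariant Z-set}) so that a lift of a map $Q\to U$ pushed off $\check Z$ projects to a map missing $\pi(\check Z)$; you invoke that remark later but not where it is actually needed, and your parenthetical ``lifts near its image, is pushed off, and pushed down'' fails without invariance.

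The genuine gap is strong $\M_2$-universality. Your proposed route via the Enlarging Theorem requires a \emph{strongly} $\M_2$-universal homotopy dense $G_\de$ subset of $\accentset{\circ}{\!\bm X}$; you offer the image of $\accentset{\circ}\B_p$, but that is circular (its strong $\M_2$-universality is essentially the theorem for $X=\B_p$). Your fallback, intersecting the closed $\M_2$-universal set of Lemma~\ref{lem: B_p is in M2-universal} over the $O(n)$-action, is not justified: the construction there is a one-parameter family of $SO(n-1)$-symmetric bodies, certainly not $O(n)$-invariant, and nothing guarantees that $\bigcap_{g} g\cdot(\text{that set})$ remains $\M_2$-universal (it could well be a single point). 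Even granting an invariant $\M_2$-universal closed set upstairs, you would still need its image to be $\M_2$-universal \emph{and} to upgrade mere $\M_2$-universality to strong $\M_2$-universality downstairs, where Proposition~5.3.5 of~\cite{BRZ-book} no longer applies since $\accentset{\circ}{\!\bm X}$ is not convex in a linear space.

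The paper sidesteps all of this with one observation you missed: since $O(n)$ is a $d$-manifold, $U\times O(n)$ being a $\Si^\o$-manifold implies $U\times\R^d$ is one; then \cite[Theorem~3.2.18]{BRZ-book} says an ANR with SDAP is strongly $\M_2$-universal iff its product with $\R^\o$ is, and $U\times\R^d\times\R^\o\cong U\times\R^\o$. This also cleans up your other steps: $U$ is an ANR as a retract of $U\times\R^d$, has SDAP by \cite[Exercise~9 in 1.3]{BRZ-book}, and is in $\M_2$ because $\M_2$ is closed-hereditary and $U\cong U\times\{0\}$ is closed in $U\times\R^d$ --- no appeal to ``perfect images of $\M_2$-spaces are $\M_2$'', which is not a standard fact.
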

\begin{proof}
By local triviality of the bundle
$\accentset{\circ} X\to \accentset{\circ}{\bm{X}}$ each point of 
$\accentset{\circ}{\!\bm{X}}$ has a neighborhood that $U$ such that $U\times O(n)$
is a $\Si^\o$-manifold. Hence $U\times \R^{d}$ is a $\Si^\o$-manifold where $d=\dim O(n)$.

Let us show that $U$ is a $\Si^\o$-manifold.
Note that $U$ is an ANR as a retract of the ANR $U\times \R^{d}$. Also 
$U$ satisfies SDAP by~\cite[Exercise 9 in section 1.3]{BRZ-book}.
Strong $\M_2$-universality of $U$ comes from~\cite[Theorem 3.2.18]{BRZ-book}, namely,
since $U\times \R^{d}$ is strongly $\M_2$-universal,
so is $U\times \R^{d}\times\R^\o$, and hence $U$. 
Since $\M_2$ is closed-hereditary and $U\times \R^{d}$ is in $\M_2$, so is $U$.

Remark~\ref{rmk: invariant Z-set} shows that $X$
is a countable union of $O(n)$-invariant $Z$-sets, and hence
so is $\accentset{\circ} X$
because $\accentset{\circ} X$ is $O(n)$-invariant and open, while
the intersection of a $Z$-set with any open is a $Z$-set in that 
open set~\cite[Corollary 1.4.5]{BRZ-book}. 

The bundle projection
$\pi\co\accentset{\circ} X\to \accentset{\circ}{\!\bm{X}}$ maps
any $O(n)$-invariant $Z$-set $\check Z$ to a $Z$-set $\pi(\check Z)$. 
Indeed, since $Q$ is contractible, 
any map $Q\to\accentset{\circ}{\!\bm{X}}$ lifts to a map $Q\to \accentset{\circ} X$
which since $\accentset{\circ} X$ is open
can be approximated by a map $h\co Q\to \accentset{\circ} X$ that misses $\check Z$. 
Now $\pi\circ h$ misses $\pi(\check Z)$ due to $O(n)$-invariance of $\check Z$.

Therefore $\accentset{\circ}{\!\bm{X}}$ is $\s Z$, and hence
so is $U$, again by~\cite[Corollary 1.4.5]{BRZ-book}. 
Thus $U$ is $\M_2$-absorbing and hence
is a $\Si^\o$-manifold. Thus $\accentset{\circ}{\bm{X}}$ is a $\Si^\o$-manifold.
\end{proof}

\begin{thm}
\label{thm: quotient homeo to Q} 
Suppose $\accentset{\circ}\B_p\subset X\subset\K_s$ and 
$\accentset{\circ} X$ is $O(n)$-invariant. 
If $\accentset{\circ} X$ is a $Q$-manifold, then  so is $\accentset{\circ}{\!\bm{X}}$. 
\end{thm}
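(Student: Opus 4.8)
The plan is to run exactly the same strategy as in the proof of Theorem~\ref{thm: quotient sigma-to-omega}, but replacing the recognition criterion for $\Si^\o$-manifolds by the corresponding criterion for $Q$-manifolds, which is considerably simpler since there is no need to track $\sigma Z$-structures or $\M_2$-membership. First I would invoke local triviality of the principal $O(n)$-bundle $\accentset{\circ} X\to\accentset{\circ}{\!\bm{X}}$ from the Slice Theorem: each point of $\accentset{\circ}{\!\bm{X}}$ has an open neighborhood $U$ with $\pi^{-1}(U)$ homeomorphic to $U\times O(n)$, hence a $Q$-manifold by hypothesis. Since $O(n)$ is a compact manifold of some dimension $d$, the product $U\times O(n)$ is locally homeomorphic to $U\times\R^d$, so $U\times\R^d$ is a $Q$-manifold.

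The key step is then to conclude that $U$ itself is a $Q$-manifold from the fact that $U\times\R^d$ is one. This is where I would cite the relevant recognition/stability result for $Q$-manifolds (Toru\'nczyk's characterization, as stated in~\cite{BRZ-book}): $U$ is a locally compact separable ANR — it is an ANR as a retract of the ANR $U\times\R^d$, and it is locally compact and separable because $\accentset{\circ}{\!\bm{X}}$, being the orbit space of the locally compact separable space $\accentset{\circ} X$ under the compact group $O(n)$, is locally compact and separable (cf.\ the argument in Lemma~\ref{lem: X/H}(3)). It remains to check the discrete approximation property for maps of $Q$ (the $Q$-manifold analogue of SDAP); this passes from $U\times\R^d$ to the retract $U$ by the same exercise in~\cite[Section 1.3]{BRZ-book} used in Theorem~\ref{thm: quotient sigma-to-omega}. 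Toru\'nczyk's theorem then gives that $U$ is a $Q$-manifold. Since $\accentset{\circ}{\!\bm{X}}$ is covered by such $U$, it is a $Q$-manifold.

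The main obstacle — though a mild one — is making sure the local product decomposition $\pi^{-1}(U)\cong U\times O(n)$ and the passage through $U\times\R^d$ are set up so that the relevant ANR and approximation properties genuinely descend to the factor $U$; this is routine given that the corresponding machinery was already assembled for the $\Si^\o$-manifold case in Theorem~\ref{thm: quotient sigma-to-omega}, and in the $Q$-manifold setting one does not need the $\sigma Z$-set bookkeeping of Remark~\ref{rmk: invariant Z-set}, so the argument is strictly shorter. One should also note that $\accentset{\circ} X$ being a $Q$-manifold requires it to be locally compact, which forces $X$ to be locally compact here (in contrast to the $\B_p$ case); but this is exactly the hypothesis in force, since $\accentset{\circ} X\subset X\subset\K_s$ and we are assuming $\accentset{\circ} X$ is a $Q$-manifold.
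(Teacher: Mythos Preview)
Your plan has a genuine gap at the key step. Toru\'nczyk's characterization of $Q$-manifolds among locally compact ANRs is \emph{not} SDAP; it is the disjoint-cells property, equivalently the property that every map $\II^k\to X$ can be approximated by maps with $Z$-set image. SDAP is a property tailored to non-locally-compact spaces---no locally compact space of interest has SDAP, since a map $Q\times\o\to X$ sending everything near a single point cannot be approximated by a locally finite family when that point has compact neighborhoods. So the ``$Q$-manifold analogue of SDAP'' you invoke is a different property, and the exercise in \cite[Section 1.3]{BRZ-book} you cite (which is about SDAP) does not show that the disjoint-cells property descends from $U\times\R^d$ to $U$. In fact the naive attempt fails: if $f,g\colon\II^n\to U$ are given, lifting to $U\times\R^d$, approximating by maps with disjoint images, and projecting back to $U$ destroys the disjointness. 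Whether the implication ``$U\times\R^d$ is a $Q$-manifold $\Rightarrow$ $U$ is a $Q$-manifold'' holds at all is nontrivial, and is certainly not a one-line citation.

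The paper does not attempt any such descent. It establishes that $\accentset{\circ}{\!\bm{X}}$ is a locally compact ANR via the bundle structure (this part matches your outline), and then verifies Toru\'nczyk's $Z$-set approximation criterion \emph{directly} using the geometry: any map $\II^k\to\accentset{\circ}{\!\bm{X}}$ is pushed by the $O(n)$-equivariant Schneider regularization (Lemma~\ref{lem: regularization}) into $\accentset{\circ}{\bm{\B}}_p$, which by Theorem~\ref{thm: quotient sigma-to-omega} (applied with $X=\B_p$) is already known to be a $\Si^\o$-manifold. In any $\Si^\o$-manifold every compactum is a $Z$-set, and since $\accentset{\circ}{\bm{\B}}_p$ is homotopy dense in $\accentset{\circ}{\!\bm{X}}$ this compactum remains a $Z$-set in $\accentset{\circ}{\!\bm{X}}$. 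So the argument is not ``strictly shorter'' than the $\Si^\o$ case---it actually \emph{relies} on that case.
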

\begin{proof}
As in the proof of Lemma~\ref{lem: X/H} we see that $\accentset{\circ} X$
is locally compact. As in the proof of Theorem~\ref{thm: quotient sigma-to-omega} 
we conclude that any point of $\accentset{\circ}{\!\bm{X}}$ has an ANR neighborhood,
hence $\accentset{\circ}{\!\bm{X}}$ is an ANR. By Toru{\'n}czyk characterization of
$Q$-manifolds among locally compact ANRs~\cite[Theorem 1]{Tor-char-Q} 
it remains to check that any map $f\co \II^k\to \accentset{\circ}{\!\bm{X}}$
can be approximated by a map whose image is a $Z$-set. By Lemma~\ref{lem: regularization}
$\bm{\B}_p$ is homotopy dense in $\bm{\K}_s$, so we  
approximate $f$ by a map with image in $\bm{\B}_p$, which actually lies
in $\accentset{\circ}{\bm{\B}}_p=\bm{\B}_p\cap \accentset{\circ}{\!\bm{X}}$ because
$\accentset{\circ}{\!\bm{X}}$ is open. Since 
$\accentset{\circ}{\!\bm{X}}$ is a $\Si^\o$-manifold, any 
compactum in $\accentset{\circ}{\!\bm{X}}$ is a $Z$-set~\cite[Proposition 1.4.9]{BRZ-book}.
\end{proof}

\begin{thm}
\label{thm: abs pairs local}
Suppose $\accentset{\circ}\B_p\subset X\subset\B^{1,\a}$ with $\a>0$ and 
$\accentset{\circ} X$ is $O(n)$-invariant. 
If $\accentset{\circ} X$ is a $\Si^\o$-manifold, then each point of 
$\accentset{\circ}{\!\bm{X}}$ has a neighborhood $U$ in $\accentset{\circ}{\bm{\K}}_s$
such that there is an open embedding $h\co U\to Q^{\,\o}$ with $h(U\cap X)=h(U)\cap\Si^\o$. 
\end{thm}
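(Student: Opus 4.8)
The statement combines the local structure of the principal bundle $\accentset{\circ} X\to\accentset{\circ}{\bm X}$ with Theorem~\ref{thm: quotient sigma-to-omega} and the pair-absorption machinery of Section~\ref{sec: pairs}. The plan is as follows. First, by Theorem~\ref{thm: quotient homeo to Q} (applied with $X=\K_s$, whose principal orbit $\accentset{\circ}\K_s$ is an open, hence $Q$-manifold, subset) the orbit space $\accentset{\circ}{\bm\K}_s$ is a $Q$-manifold, and by Theorem~\ref{thm: quotient sigma-to-omega} the orbit space $\accentset{\circ}{\bm X}$ is a $\Si^\o$-manifold. Fix a point $p\in\accentset{\circ}{\bm X}$. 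Using local triviality of the principal $O(n)$-bundle $\accentset{\circ}\K_s\to\accentset{\circ}{\bm\K}_s$ (the slice theorem), choose a neighborhood $U$ of $p$ in $\accentset{\circ}{\bm\K}_s$ over which the bundle is trivial, so that $\pi^{-1}(U)\cong U\times O(n)$ inside $\accentset{\circ}\K_s$, and correspondingly $\pi^{-1}(U)\cap X\cong U'\times O(n)$ where $U'=U\cap\accentset{\circ}{\bm X}$; here I use that $X$ is $O(n)$-invariant so its preimage is a subbundle and the trivialization restricts.

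Second, I would identify the pair $(U, U')$ as an $(\M_0,\M_2)$-absorbing pair. Indeed $U$ is an open subset of the $Q$-manifold $\accentset{\circ}{\bm\K}_s$; $U'$ is homotopy dense in $U$ since $\bm\B_p$ is homotopy dense in $\bm\K_s$ by Lemma~\ref{lem: regularization} (this passes to the open piece and to $X\supset\accentset{\circ}\B_p$); and $U'$ is a $\Si^\o$-manifold, being open in the $\Si^\o$-manifold $\accentset{\circ}{\bm X}$ (homotopy equivalent open subsets of $\Si^\o$-manifolds need not be considered here — openness suffices). Finally $U'$ lies in a $\s Z$-subset of $U$: Remark~\ref{rmk: invariant Z-set} furnishes $O(n)$-invariant $Z$-sets $I_m$ in $\K_s$ covering $X$; intersecting with $\accentset{\circ}\K_s$ gives $O(n)$-invariant $Z$-sets in $\accentset{\circ}\K_s$ (intersection of a $Z$-set with an open set, via~\cite[Corollary 1.4.5]{BRZ-book}), and as in the proof of Theorem~\ref{thm: quotient sigma-to-omega} the bundle projection carries $O(n)$-invariant $Z$-sets to $Z$-sets, so their images form a $\s Z$-cover of $\accentset{\circ}{\bm X}$, which we intersect with $U$. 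Hence the hypotheses of Lemma~\ref{lem: pair absorb} hold and $(U,U')$ is $(\M_0,\M_2)$-absorbing.

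Third, by the same reasoning, or directly by Lemma~\ref{lem: absorb pairs ex}(1), any open subset $V\subset Q^{\,\o}$ together with $V\cap\Si^\o$ is also $(\M_0,\M_2)$-absorbing. Since $U$ is an open subset of a $Q$-manifold, after shrinking $U$ we may assume $U$ is homeomorphic to an open subset $V$ of $Q^{\,\o}$ (every $Q$-manifold is locally homeomorphic to $Q\cong Q^{\,\o}$); fix such a homeomorphism $f\co U\to V$. Now apply the uniqueness Lemma~\ref{lem: uniq absorb} to the two $(\M_0,\M_2)$-absorbing pairs $(U,U')$ and $(V, V\cap\Si^\o)$ with $B_1=B_2=\emptyset$ (the hypothesis $f(B_1\cap X_1)=B_2\cap X_2$ is vacuous): this produces a homeomorphism $h\co U\to V\subset Q^{\,\o}$ with $h(U')=V\cap\Si^\o=h(U)\cap\Si^\o$, which is the desired open embedding.

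\textbf{Main obstacle.} The routine part is the bundle trivialization and recognizing $U'$ as homotopy dense and as a $\Si^\o$-manifold. The delicate point is verifying that $U'$ lies in a $\s Z$-subset of $U$ — this is where the $O(n)$-equivariance of the $Z$-sets from Remark~\ref{rmk: invariant Z-set} is essential, since only invariant $Z$-sets descend along the orbit map, and one must check (as in Theorem~\ref{thm: quotient sigma-to-omega}) that the descended sets really are $Z$-sets, using contractibility of $Q$ to lift maps and openness of $\accentset{\circ}\K_s$ to push them off. Once that is in hand, the conclusion is a direct invocation of Lemmas~\ref{lem: pair absorb}, \ref{lem: absorb pairs ex} and~\ref{lem: uniq absorb}.
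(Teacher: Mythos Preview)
Your proof is correct and follows essentially the same route as the paper: use Remark~\ref{rmk: invariant Z-set} and the descent argument from Theorem~\ref{thm: quotient sigma-to-omega} to show $\accentset{\circ}{\bm X}$ lies in a $\s Z$-subset of $\accentset{\circ}{\bm\K}_s$, combine with Theorems~\ref{thm: quotient sigma-to-omega}--\ref{thm: quotient homeo to Q} so that Lemma~\ref{lem: pair absorb} applies, and finish via Lemma~\ref{lem: uniq absorb}. The paper is slightly terser---it applies Lemma~\ref{lem: pair absorb} globally to $(\accentset{\circ}{\bm\K}_s,\accentset{\circ}{\bm X})$ and then cites Remark~\ref{rmk: (Q, si-to-o manifolds} rather than localizing first---and your bundle-trivialization step, while harmless, is not actually used anywhere in your argument.
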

\begin{proof}
As in the proof of Theorem~\ref{thm: quotient sigma-to-omega}
we use Remark~\ref{rmk: invariant Z-set}
to show that $\accentset{\circ}{\!\bm{X}}$ lies in the $\s Z$-subset of 
$\accentset{\circ}{\bm{\K}}_s$. 
Now Remark~\ref{rmk: (Q, si-to-o manifolds}
and Theorems~\ref{thm: quotient sigma-to-omega}--\ref{thm: quotient homeo to Q}
imply that $(\accentset{\circ}{\bm{\K}}_s, \accentset{\circ}{\!\bm{X}})$ is a
$(Q^{\,\o}, \Si^\o)$-manifold.
\end{proof}

\begin{thm}
Let $L$ be the product of $[0,1)$ and any locally finite simplicial complex
homotopy equivalent to $BO(n)$.
If $\accentset{\circ}\B_p\subset X\subset\B^{1,\a}$ with $\a>0$ such that 
$\accentset{\circ} X$ is an $O(n)$-invariant $\Si^\o$-manifold, then there is a homeomorphisms
$h\co \accentset{\circ}{\bm{\K}}_s\to L\times Q^{\,\o}$ that takes 
$\accentset{\circ}{\!\bm{X}}$ onto $L\times \Si^\o$.
\end{thm}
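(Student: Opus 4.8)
The plan is to combine the local structure result (Theorem~\ref{thm: abs pairs local}) with the homotopy-theoretic identification of $\accentset{\circ}{\!\bm{X}}$ and $\accentset{\circ}{\bm{\K}}_s$ (Lemma~\ref{lem: BO(n)}) and then invoke the triangulation theorem for $\Si^\o$-manifolds. First I would observe that by Theorem~\ref{thm: quotient homeo to Q} the space $\accentset{\circ}{\bm{\K}}_s$ is a $Q$-manifold, and by Lemma~\ref{lem: BO(n)} it is homotopy equivalent to $BO(n)$. By Lemma~\ref{lem: prod with [0,1)} we have $\accentset{\circ}{\bm{\K}}_s\cong\accentset{\circ}{\bm{\K}}_s\times[0,1)$, so Chapman's theorem (\cite[Theorem 21.2]{Cha-book}) shows that $\accentset{\circ}{\bm{\K}}_s$ is homeomorphic to $K'\times Q$ for any locally finite simplicial complex $K'$ homotopy equivalent to $BO(n)$, hence $\accentset{\circ}{\bm{\K}}_s\cong L\times Q\cong L\times Q^{\,\o}$ where $L=[0,1)\times K'$. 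Fix such a homeomorphism $h_0\co\accentset{\circ}{\bm{\K}}_s\to L\times Q^{\,\o}$.

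Next I would upgrade this to a homeomorphism of pairs. The point is that Theorem~\ref{thm: abs pairs local} together with Remark~\ref{rmk: (Q, si-to-o manifolds} says exactly that $(\accentset{\circ}{\bm{\K}}_s,\accentset{\circ}{\!\bm{X}})$ is a $(Q^{\,\o},\Si^\o)$-manifold; in particular $(\accentset{\circ}{\bm{\K}}_s,\accentset{\circ}{\!\bm{X}})$ is $(\M_0,\M_2)$-absorbing and $\accentset{\circ}{\!\bm{X}}$ is a homotopy dense $\Si^\o$-manifold in $\accentset{\circ}{\bm{\K}}_s$ lying in a $\s Z$-subset (this last point is established inside the proof of Theorem~\ref{thm: abs pairs local} via Remark~\ref{rmk: invariant Z-set}). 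On the model side, $(L\times Q^{\,\o}, L\times\Si^\o)$ is also a $(Q^{\,\o},\Si^\o)$-manifold by Lemma~\ref{lem: pair absorb}, since $L\times\Si^\o$ is a $\Si^\o$-manifold that is homotopy dense in the $Q$-manifold $L\times Q^{\,\o}$ and lies in the $\s Z$-set $L\times(\bigcup_k N_k)$ in the notation of the proof of Lemma~\ref{lem: absorb pairs ex}(1). So both pairs are $(\M_0,\M_2)$-absorbing, the ambient spaces are $Q$-manifolds, and $h_0$ is a homeomorphism of the ambient $Q$-manifolds; by the uniqueness theorem for such pairs, Lemma~\ref{lem: uniq absorb} (applied with $B_i=\emptyset$), $h_0$ can be replaced by a homeomorphism $h\co\accentset{\circ}{\bm{\K}}_s\to L\times Q^{\,\o}$ with $h(\accentset{\circ}{\!\bm{X}})=L\times\Si^\o$. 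Since $\accentset{\circ}{\!\bm{X}}=\accentset{\circ}{\bm{\K}}_s\cap\accentset{\circ}{\!\bm{X}}$ and the statement only asks for $\accentset{\circ}{\!\bm{X}}$ to be carried onto $L\times\Si^\o$, this completes the argument.

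I expect the only genuine subtlety to be the bookkeeping in the first paragraph: one must make sure that the factor of $[0,1)$ is available so that the homotopy classification of $Q$-manifolds upgrades to a homeomorphism classification, which is why $L$ is taken of the form $[0,1)\times K'$ rather than just $K'$; the relevant input is Lemma~\ref{lem: prod with [0,1)} for $M=\accentset{\circ}{\bm{\K}}_s$ (the case $k=l=n$, or any $k\le l$, of that lemma). Everything else is a direct citation: Lemma~\ref{lem: BO(n)} for the homotopy type, Theorem~\ref{thm: quotient homeo to Q} for the $Q$-manifold property, Theorem~\ref{thm: abs pairs local} together with Remark~\ref{rmk: (Q, si-to-o manifolds} for the $(Q^{\,\o},\Si^\o)$-manifold structure, and Lemma~\ref{lem: uniq absorb} for the final matching. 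The hypotheses on $X$ are precisely those needed to apply Theorems~\ref{thm: quotient sigma-to-omega}, \ref{thm: quotient homeo to Q}, and \ref{thm: abs pairs local}, so no extra work is required there.
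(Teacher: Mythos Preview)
Your proposal is correct and follows essentially the same route as the paper's proof: identify $\accentset{\circ}{\bm{\K}}_s$ with $L\times Q^{\,\o}$ via Lemma~\ref{lem: BO(n)}, Lemma~\ref{lem: prod with [0,1)}, Theorem~\ref{thm: quotient homeo to Q}, and Chapman's theorem; then observe that both $(\accentset{\circ}{\bm{\K}}_s,\accentset{\circ}{\!\bm{X}})$ and $(L\times Q^{\,\o},L\times\Si^\o)$ are $(\M_0,\M_2)$-absorbing (via Theorem~\ref{thm: abs pairs local}/Remark~\ref{rmk: (Q, si-to-o manifolds} and the argument of Lemma~\ref{lem: absorb pairs ex}(1)), and conclude with Lemma~\ref{lem: uniq absorb}. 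One small exposition slip: Chapman's Theorem~21.2 does not directly give $\accentset{\circ}{\bm{\K}}_s\cong K'\times Q$; it gives $\accentset{\circ}{\bm{\K}}_s\times[0,1)\cong K'\times Q\times[0,1)$, and only after invoking Lemma~\ref{lem: prod with [0,1)} do you get $\accentset{\circ}{\bm{\K}}_s\cong L\times Q$ --- but your final paragraph shows you understand this, so just tighten the wording in the first paragraph.
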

\begin{proof}
Theorem~\ref{thm: quotient homeo to Q} says that $\accentset{\circ}{\bm{\K}}_s$ is a $Q$-manifold,
which  by Lemma~\ref{lem: prod with [0,1)} is homeomorphic to its product with $[0,1)$.
The product of any locally finite simplicial complex with $Q$ is a 
$Q$-manifold~\cite[Theorem 1.1.24]{BRZ-book}.
Lemma~\ref{lem: BO(n)} gives a homotopy equivalence 
of $\accentset{\circ}{\bm{\K}}_s$ and $L\times Q$, and
since both spaces are products of $[0,1)$ and a $Q$-manifold,
they are homeomorphic~\cite[Theorem 23.1]{Cha-book}.
Also $\accentset{\circ}{\!\bm{X}}$, $L\times \Si^\o$
are $\Si^\o$-manifolds, see Theorem~\ref{thm: quotient sigma-to-omega} 
and Section~\ref{sec: inf dim top}. The pair
$(\accentset{\circ}{\bm{\K}}_s, \accentset{\circ}{\!\bm{X}})$  is a 
$(Q^{\,\o},\Si^\o)$-manifolds by Theorem~\ref{thm: abs pairs local}, 
and hence is $(\M_0, \M_2)$-absorbing, see Remark~\ref{rmk: (Q, si-to-o manifolds}. 
As a $\s$-compact space $L$ lies in $\M_2$, and then
the proof of Lemma~\ref{lem: absorb pairs ex}(1) shows that
$(L\times Q^{\,\o}, L\times \Si^\o)$ is $(\M_0, \M_2)$-absorbing.
The claim now follows from Lemma~\ref{lem: uniq absorb}. 
\end{proof}

\begin{rmk}
To make $L$ explicit start with the standard CW structure on $BO(n)$
given by Schubert cells, use mapping telescope to replace it by a 
homotopy equivalent locally finite CW complex, and triangulate the result.
\end{rmk}

\section{Deforming disks modulo congruence}
\label{sec: destroy}

The results of Section~\ref{sec: quotients} say little about
the local structure of  $\bm{\K}_s$ near the points of
$\bm{\K}_s\setminus\accentset{\circ}{\bm{\K}}_s$. For example, 
one wants to have
better understanding of the standard stratification of $\bm{\K}_s$
by orbit type. As was mentioned in the introduction the stratum
$\accentset{\circ}{\bm{\K}}_s$ is not homotopy dense in
$\bm{\K}_s$ because they have different fundamental groups, so
a singular $2$-disk in $\bm{\K}_s$ cannot always be pushed  
into $\accentset{\circ}{\bm{\K}}_s$. This is possible 
for any path in $\bm{\K}_s$
because it lifts into $\K_s$~\cite[Theorem II.6.2]{Bre-book} 
and then Lemma~\ref{lem: hom dense trivial isometry}
applies. The lemma below shows that under mild assumptions
on a point $\bm{x}\in \bm{\K}_s\setminus\accentset{\circ}{\bm{\K}}_s$
there is a small singular disk in $\bm{\K}_s$ near $\bm{x}$ that cannot be pushed into 
$\accentset{\circ}{\bm{\K}}_s$. Thus the orbit space analog of
Lemma~\ref{lem: hom dense trivial isometry} fails locally.

Let $\pi_j$ denote the $j$th homotopy group, 
let $\pi\co \K_s\to\bm{\K}_s$ be orbit map,  
$I_x$ be the isotropy subgroup of $x$ in $O(n)$, and $\bm{x}=\pi(x)$.

\begin{lem}
\label{lem: local no sym destroy}
Let $\B_p\subset X\subset \K_s$ where $X$ is $O(n)$-invariant. 
If \mbox{$x\in X\setminus \accentset{\circ}{X}$} and
the inclusion $I_x\to O(n)$ is $\pi_k$-nonzero for some $k\ge 0$,
then any neighborhood of $\bm{x}\in\bm{X}$ contains a neighborhood $\bm{U}$
such that the inclusion $\accentset{\circ}{\bm{X}}\cap\bm{U}\to\bm{U}$
is not $\pi_{k+1}$-injective, and in particular,
$\pi_k(\bm{U}, \accentset{\circ}{\bm{X}}\cap\bm{U})\neq 0$.
\end{lem}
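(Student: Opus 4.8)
The plan is to exploit the local bundle structure coming from the Slice Theorem together with the fact that $\B_p$ is homotopy dense in $\K_s$. First I would fix a slice at $x$: by the Slice Theorem there is an $I_x$-invariant neighborhood of $x$ in $X$ of the form $O(n)\times_{I_x} S$ for some $I_x$-space $S$, and correspondingly a neighborhood $\bm U_0 = S/I_x$ of $\bm x$ in $\bm X$. Any prescribed neighborhood of $\bm x$ contains such a $\bm U_0$. Next I would understand the principal stratum inside the slice: the points of $S$ with trivial isotropy (after passing to a still smaller invariant slice if needed, using that $\accentset{\circ}{X}$ is open and homotopy dense in $X$ by Lemma~\ref{lem: hom dense trivial isometry}, so the slice meets $\accentset{\circ}{X}$ densely) give $\accentset{\circ}{X}\cap O(n)\times_{I_x} S \to \accentset{\circ}{\bm X}\cap\bm U_0$ as a principal $O(n)$-bundle, while the bundle over the full $\bm U_0$ is $O(n)\times_{I_x} S \to \bm U_0$, which is the bundle with structure group $O(n)$ associated to the principal $I_x$-bundle $S\to S/I_x$ via the inclusion $I_x\hookrightarrow O(n)$.

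The key point is then a homotopy-theoretic comparison of these two bundles over $\bm U_0$ (after shrinking $\bm U_0$ to be contractible, which is possible since $\bm X$ is an AR and hence locally contractible — the paper notes this in question (d) of the introduction). Over a contractible base the $O(n)$-bundle $O(n)\times_{I_x} S\to\bm U_0$ is trivial, so its total space is $\bm U_0\times O(n)$, homotopy equivalent to $O(n)$; meanwhile $\accentset{\circ}{X}$, being open and homotopy dense in the AR $\K_s$ (Lemma~\ref{lem: hom dense trivial isometry} again), has the part over $\bm U_0$ contractible, hence the restricted principal bundle $\accentset{\circ}{X}|_{\bm U_0}\to\accentset{\circ}{\bm X}\cap\bm U_0$ is classified by a map $\accentset{\circ}{\bm X}\cap\bm U_0\to BO(n)$ that is $\pi_{k+1}$-nontrivial precisely when the original inclusion $I_x\to O(n)$ is $\pi_k$-nontrivial, via the long exact sequence of $O(n)\to E\to B$ and the identification $\pi_{k+1}(BO(n))\cong\pi_k(O(n))$ restricted along $BI_x\to BO(n)$. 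I would set $\bm U=\bm U_0$ (or a contractible subneighborhood) and read off from the comparison that the inclusion $\accentset{\circ}{\bm X}\cap\bm U\to\bm U$ cannot be $\pi_{k+1}$-injective: a $(k+1)$-sphere detecting the nonzero class in $BO(n)$ maps into $\accentset{\circ}{\bm X}\cap\bm U$ nontrivially on $\pi_{k+1}$ but bounds in $\bm U$ since $\bm U$ is contractible (or more carefully, since the class dies in $\pi_{k+1}(\bm U)$). The relative statement $\pi_k(\bm U,\accentset{\circ}{\bm X}\cap\bm U)\neq 0$ follows immediately from the long exact sequence of the pair, using $\pi_{k+1}(\bm U)\to\pi_{k+1}(\bm U,\accentset{\circ}{\bm X}\cap\bm U)\to\pi_k(\accentset{\circ}{\bm X}\cap\bm U)$ and the failure of injectivity of the last map's partner, i.e. the nonvanishing of the connecting map.

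The main obstacle I anticipate is bookkeeping the ``shrinking'' steps carefully: I need a single neighborhood $\bm U$ that is simultaneously (i) contained in the given neighborhood, (ii) small enough that the slice description is valid, (iii) contractible (or at least $(k+1)$-connected enough to kill the relevant class), and (iv) such that $\accentset{\circ}{\bm X}\cap\bm U$ is genuinely the principal stratum with the asserted bundle structure. Each of these is available — (ii) from the Slice Theorem, (iii) from local contractibility of the AR $\bm X$, (iv) from openness of $\accentset{\circ}{X}$ and equivariance — but arranging them compatibly, and verifying that the classifying-map argument is insensitive to the ambient deformation retraction making $\accentset{\circ}{X}|_{\bm U}$ contractible, is where the real care is needed. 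The algebraic identification $\pi_{k+1}(BO(n))\cong\pi_k(O(n))$ and the naturality under $I_x\hookrightarrow O(n)$ are standard and I would invoke them without detailed proof.
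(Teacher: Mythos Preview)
Your approach is more abstract than the paper's and, as written, has a genuine gap. You assume that $\bm U$ can be shrunk to a contractible neighborhood of $\bm x$, citing question~(d) in the introduction; but that question is posed precisely because this is \emph{not known}. An AR is locally contractible only in the weak sense that every neighborhood contains a smaller one which is nullhomotopic in the larger --- this does not give a basis of contractible (or even $(k+1)$-connected) neighborhoods. So step~(iii) of your bookkeeping is not ``available'' as you claim. There is also a slip in the slice description: $S\to S/I_x$ is not a principal $I_x$-bundle, since $x\in S$ is an $I_x$-fixed point; only the restriction $S^\circ\to S^\circ/I_x$ to the free part is principal. Your classifying-map argument then needs $S^\circ$ to be $k$-connected so that the boundary map $\pi_{k+1}(S^\circ/I_x)\to\pi_k(I_x)$ is onto, and you do not establish this either.

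The paper avoids both issues by working upstairs in $\K_s$, where convexity is available, rather than seeking contractibility downstairs in $\bm X$. It takes $V$ to be a small Hausdorff ball about $x$ (convex and $I_x$-invariant), picks $y\in V\cap\accentset{\circ}{X}$, and maps $S^k$ into the orbit $I_x(y)\cong I_x$ representing a class nonzero in $\pi_k(O(n))$. The straight-line homotopy $tf(v)+(1-t)x$ cones this sphere to $x$ inside the convex $V$; homotopy density of $\accentset{\circ}{X}$ pushes the cone into $\accentset{\circ}{X}\cap V$, giving a $(k+1)$-disk $h$ whose boundary lies in a single $O(n)$-orbit. Then $\pi\circ h$ is a $(k+1)$-sphere in $\accentset{\circ}{\bm X}\cap\bm U$ which is nontrivial because its transgression in the \emph{global} bundle $\accentset{\circ}{\K}_s\to\accentset{\circ}{\bm\K}_s$ (with contractible total space) is the given nonzero class in $\pi_k(O(n))$; and it bounds in $\bm U$ via the linear cone to $x$ followed by Schneider's regularization. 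No contractibility of $\bm U$ is ever invoked. If you want to salvage your route, the fix is exactly this: replace the hoped-for contractibility of $\bm U$ by convexity of $V\subset\K_s$, and detect nontriviality via the global classifying map rather than a local one.
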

\begin{proof}
Inside any neighborhood of $x$ in $X$ one can
find a neighborhood of $x$ of the form $U=X\cap V$ where $V$ is 
$I_x$-invariant, convex and open in $\K_s$ (e.g., let $V$
be a sufficiently small ball about $x$ in the Hausdorff metric).
The orbit $I_x(y)$
of any $y\in U\cap \accentset{\circ}{X}$
lies $U\cap \accentset{\circ}{X}$. 
Let $f\co S^k\to I_x(y)$
be a map that is $\pi_k$-nonzero when composed with the inclusion
$I_x(y)\to O(n)$. 
Since $V$ is convex it contains the family 
$f_t\co \mathbb S^k\to V$ of singular $k$-spheres given by
$f_t(v)=tf(v)+(1-t)x$, $t\in\II$.

Since $\accentset{\circ}{X}$ is homotopy dense in $\K_s$ we can slightly deform
$f_t$ to $h_t\co \mathbb S^k\to \accentset{\circ}{X}\cap V$ with $h_1=f$. 
Define
$h\co \mathbb B^{k+1}\to\accentset{\circ}{X}\cap V$ by $h(tv)=h_t(v)$.
Since $f$ is $\pi_k$-nonzero in the fiber
of the bundle $\pi\co \accentset{\circ}{\K}_s\to \accentset{\circ}{\bm{\K}}_s$
the singular $(k+1)$-sphere map $\pi\circ h$  in
$\accentset{\circ}{\bm{X}}\cap\bm{U}$ is not null-homotopic 
in $\accentset{\circ}{\bm{\K}}_s$.

Now $r\to rh(tv)+(1-r)x$, $r\in\II$, is 
a family of singular $(k+1)$-disks in $V$ whose boundaries are
the singular $k$-spheres $r\to rf(v)+(1-r)x$ each projecting
to a point of $\bm{\K}_s$. Applying $\pi$
gives a null-homotopy of the singular $(k+1)$-sphere
$\pi\circ h$ inside $\bm{V}$, and hence
in $\bm{U}=\bm{X}\cap\bm{V}$ by Schneider's regularization.
\end{proof}

\begin{rmk}
The closed subgroups $H$ for which the inclusion 
$H\to O(n)$ is zero on all homotopy groups can be completely 
understood, see~\cite{Bry-MO}.
\end{rmk}

{\bf Acknowledgments:} I am grateful to Sergey Antonyan,
Taras Banakh, Robert Bryant, Mohammad Ghomi, Mikhail Ostrovskii,
and Rolf Schneider for helpful discussions.

\small
\bibliographystyle{amsalpha}
\bibliography{cb}

\def\cprime{$'$} \def\cprime{$'$}
\providecommand{\bysame}{\leavevmode\hbox to3em{\hrulefill}\thinspace}
\providecommand{\MR}{\relax\ifhmode\unskip\space\fi MR }
\providecommand{\MRhref}[2]{%
  \href{http://www.ams.org/mathscinet-getitem?mr=#1}{#2}
}
\providecommand{\href}[2]{#2}
\begin{thebibliography}{AJPJOn15}

\bibitem[ABR04]{AgeBogRep}
S.~M. Ageev, S.~A. Bogatyui, and D.~Repovsh, \emph{The {B}anach-{M}azur
  compactum is an {A}leksandrov compactification of a {$Q$}-manifold}, Mat.
  Zametki \textbf{76} (2004), no.~1, 3--10.

\bibitem[Age16]{Age2016}
S.~M. Ageev, \emph{The exponent of {$G$}-spaces and isovariant extensors}, Mat.
  Sb. \textbf{207} (2016), no.~2, 3--44.

\bibitem[AJP13]{AntPer}
S.~A. Antonyan and N.~Jonard-P{\'e}rez, \emph{Affine group acting on
  hyperspaces of compact convex subsets of {$\Bbb R^n$}}, Fund. Math.
  \textbf{223} (2013), no.~2, 99--136.

\bibitem[AJPJOn15]{AJJ-width}
S.~A. Antonyan, N.~Jonard-P\'erez, and S.~Ju\'arez-Ord\'o\~nez,
  \emph{Hyperspaces of convex bodies of constant width}, Topology Appl.
  \textbf{196} (2015), no.~part B, 347--361.

\bibitem[Ant88]{Ant-orbit-sp-1988}
S.~A. Antonyan, \emph{Retraction properties of an orbit space}, Mat. Sb. (N.S.)
  \textbf{137(179)} (1988), no.~3, 300--318, 432.

\bibitem[Ant00]{Ant-fm2000}
\bysame, \emph{The topology of the {B}anach-{M}azur compactum}, Fund. Math.
  \textbf{166} (2000), no.~3, 209--232.

\bibitem[Ant05]{Ant-ANE-G-conv}
\bysame, \emph{Extending equivariant maps into spaces with convex structure},
  Topology Appl. \textbf{153} (2005), no.~2-3, 261--275.

\bibitem[Ant11]{Ant-orbit-sp-ANR}
\bysame, \emph{Compact group actions on equivariant absolute neighborhood
  retracts and their orbit spaces}, Topology Appl. \textbf{158} (2011), no.~2,
  141--151.

\bibitem[Baz93]{Baz-strict-conv}
L.~E. Bazilevich, \emph{On the hyperspace of strictly convex bodies}, Mat.
  Stud. \textbf{2} (1993), 83--86.

\bibitem[Baz97]{Baz-width}
\bysame, \emph{Topology of a hyperspace of convex bodies of constant width},
  Mat. Zametki \textbf{62} (1997), no.~6, 813--819.

\bibitem[BB]{BanBel}
T.~Banakh and I.~Belegradek, \emph{Spaces of nonnegatively curved surfaces},
  arXiv:1510.07269, to appear in J. Math. Soc. Japan.

\bibitem[Bel]{Bel-ghs2}
I.~Belegradek, \emph{Gromov-{H}ausdorff hyperspace of nonnegatively curved
  \mbox{$2$-spheres}}, arXiv:1705.01223.

\bibitem[BHS05]{BHS-holder}
B.~Bojarski, P.~Hajlasz, and P.~Strzelecki, \emph{Sard's theorem for mappings
  in {H}\"older and {S}obolev spaces}, Manuscripta Math. \textbf{118} (2005),
  no.~3, 383--397.

\bibitem[BJ17]{BelJia}
I.~Belegradek and Z.~Jiang, \emph{Smoothness of {M}inkowski sum and generic
  rotations}, J. Math. Anal. Appl. \textbf{450} (2017), no.~2, 1229--1244.

\bibitem[BM86]{BesMog}
M.~Bestvina and J.~Mogilski, \emph{Characterizing certain incomplete
  infinite-dimensional absolute retracts}, Michigan Math. J. \textbf{33}
  (1986), no.~3, 291--313.

\bibitem[Bom90]{Bow-analy}
J.~Boman, \emph{Smoothness of sums of convex sets with real analytic
  boundaries}, Math. Scand. \textbf{66} (1990), no.~2, 225--230.

\bibitem[BP75]{BP-book}
C.~Bessaga and A.~Pelczy{\'n}ski, \emph{Selected topics in infinite-dimensional
  topology}, PWN---Polish Scientific Publishers, Warsaw, 1975, Monografie
  Matematyczne, Tom 58. [Mathematical Monographs, Vol. 58].

\bibitem[Bre72]{Bre-book}
G.~E. Bredon, \emph{Introduction to compact transformation groups}, Academic
  Press, New York-London, 1972, Pure and Applied Mathematics, Vol. 46.

\bibitem[Bry]{Bry-MO}
R.~Bryant, \emph{Compact {L}ie group inclusions that are trivial on all
  homotopy groups}, mathoverflow.net/questions/268027/.

\bibitem[BRZ96]{BRZ-book}
T.~Banakh, T.~Radul, and M.~Zarichnyi, \emph{Absorbing sets in
  infinite-dimensional manifolds}, Mathematical Studies Monograph Series,
  vol.~1, VNTL Publishers, L\cprime viv, 1996.

\bibitem[BZ06]{BazZar-width}
L.~E. Bazylevych and M.~M. Zarichnyi, \emph{On convex bodies of constant
  width}, Topology Appl. \textbf{153} (2006), no.~11, 1699--1704.

\bibitem[Cha76]{Cha-book}
T.~A. Chapman, \emph{Lectures on {H}ilbert cube manifolds}, American
  Mathematical Society, Providence, R. I., 1976, Expository lectures from the
  CBMS Regional Conference held at Guilford College, October 11-15, 1975,
  Regional Conference Series in Mathematics, No. 28.

\bibitem[Cur78]{Cur-peano}
D.~W. Curtis, \emph{Growth hyperspaces of {P}eano continua}, Trans. Amer. Math.
  Soc. \textbf{238} (1978), 271--283.

\bibitem[Dol63]{Dol-partition}
A.~Dold, \emph{Partitions of unity in the theory of fibrations}, Ann. of Math.
  (2) \textbf{78} (1963), 223--255.

\bibitem[Eng89]{Eng-gen-top}
R.~Engelking, \emph{General topology}, second ed., Sigma Series in Pure
  Mathematics, vol.~6, Heldermann Verlag, Berlin, 1989, Translated from the
  Polish by the author.

\bibitem[GBV14]{GV}
F.~Gay-Balmaz and C.~Vizman, \emph{Principal bundles of embeddings and
  nonlinear {G}rassmannians}, Ann. Global Anal. Geom. \textbf{46} (2014),
  no.~3, 293--312.

\bibitem[Gho12]{Gho}
M.~Ghomi, \emph{Deformations of unbounded convex bodies and hypersurfaces},
  Amer. J. Math. \textbf{134} (2012), no.~6, 1585--1611.

\bibitem[GT01]{GilTru}
D.~Gilbarg and N.~S. Trudinger, \emph{Elliptic partial differential equations
  of second order}, Classics in Mathematics, Springer-Verlag, Berlin, 2001,
  Reprint of the 1998 edition.

\bibitem[Kec95]{Kec-book-set-th}
A.~S. Kechris, \emph{Classical descriptive set theory}, Graduate Texts in
  Mathematics, vol. 156, Springer-Verlag, New York, 1995.

\bibitem[KP91]{KraPar}
S.~G. Krantz and H.~R. Parks, \emph{On the vector sum of two convex sets in
  space}, Canad. J. Math. \textbf{43} (1991), no.~2, 347--355.

\bibitem[NQS79]{NQS}
S.~B. Nadler, Jr., J.~Quinn, and N.~M. Stavrakas, \emph{Hyperspaces of compact
  convex sets}, Pacific J. Math. \textbf{83} (1979), no.~2, 441--462.

\bibitem[Pal60]{Pal-mem}
R.~S. Palais, \emph{The classification of {$G$}-spaces}, Mem. Amer. Math. Soc.
  No. 36, 1960.

\bibitem[Sch14]{Sch-book}
R.~Schneider, \emph{Convex bodies: the {B}runn-{M}inkowski theory}, expanded
  ed., Encyclopedia of Mathematics and its Applications, vol. 151, Cambridge
  University Press, Cambridge, 2014.

\bibitem[Tor80]{Tor-char-Q}
H.~Toru\'nczyk, \emph{On {${\rm CE}$}-images of the {H}ilbert cube and
  characterization of {$Q$}-manifolds}, Fund. Math. \textbf{106} (1980), no.~1,
  31--40.

\bibitem[TW80]{TorWes}
H.~Toru\'nczyk and J.~E. West, \emph{The fine structure of {$S^{1}/S^{1}$};\ a
  {$Q$}-manifold hyperspace localization of the integers}, Proceedings of the
  {I}nternational {C}onference on {G}eometric {T}opology ({W}arsaw, 1978), PWN,
  Warsaw, 1980, pp.~439--449.

\bibitem[Wes90]{West-open-pr}
J.~E. West, \emph{Open problems in infinite-dimensional topology}, Open
  problems in topology, North-Holland, Amsterdam, 1990, pp.~523--597.

\end{thebibliography}

\end{document}